\newtheorem{thm}{Theorem}[section]
\newtheorem{prop}[thm]{Proposition}
\renewenvironment{proof}{\par\noindent{\bf Proof.}}{$\square$\par\bigskip}
\newtheorem{lem}[thm]{Lemma}
\newtheorem{remark}[thm]{Remark}
\newtheorem{cor}[thm]{Corollary}
\def\Z{\mathbb Z}
\def\R{\mathbb R}
\def\C{\mathbb C}
\def\F{\mathbb F}
\def\F{\mathcal F}
\def\O{\operatorname{O}}
\def\o{\operatorname{o}}
\def\log{\operatorname{log}}
\def\thet{\operatorname{\theta_{\textit{f}}\,(\textit{p})}}
\def\tht{\operatorname{\frac{\theta_{\textit{f}}\,(\textit{p})}{2\pi}}}
\def\A{\operatorname{\hat{\mathcal{S}}^{\pm}_M}}
\def\Ap{\operatorname{\hat{\mathcal{S}}^{+}_M}}
\def\T{\operatorname{\widehat{{\textit{S}^\pm_\textit{M}}^2}}}
\def\U{\operatorname{\mathcal{U}^{\pm}_M}}
\def\st{\operatorname{\hat{\it{S}}_\textit{M}^{\pm}}}
\def\ST{\operatorname{\textit{S}^{\pm}(\textit{M,\,f\,})(\textit{x})}}
\date\today
\title{Fluctuations in the distribution of Hecke eigenvalues about the Sato-Tate measure}
\author[Neha Prabhu]{Neha Prabhu}
\address{Neha Prabhu, IISER Pune, Dr Homi Bhabha Road, Pashan, Pune - 411008, Maharashtra, India}
\email{neha.prabhu@students.iiserpune.ac.in}
\author[Kaneenika Sinha]{Kaneenika Sinha}
\address{Kaneenika Sinha, IISER Pune, Dr Homi Bhabha Road, Pashan, Pune - 411008, Maharashtra, India}
\email{kaneenika@iiserpune.ac.in}
\thanks{The research of the first author is supported by a PhD scholarship from the National Board for Higher Mathematics, India.}
\begin{document}

\begin{abstract}
We study fluctuations in the distribution of families of $p$-th Fourier coefficients $a_f(p)$ of normalised holomorphic Hecke eigenforms $f$ of weight $k$ with respect to $SL_2(\Z)$ as $k \to \infty$ and primes $p \to \infty.$  These families are known to be equidistributed with respect to the Sato-Tate measure.  We consider a fixed interval $I \subset [-2,2]$  and derive the variance of the number of $a_f(p)$'s lying in $I$ as $p \to \infty$ and $k \to \infty$ (at a suitably fast rate).  The number of $a_f(p)$'s lying in $I$ is shown to asymptotically follow a Gaussian distribution when appropriately normalised.  A similar theorem is obtained for primitive Maass cusp forms.
\end{abstract}

\keywords{Hecke eigenforms, Sato-Tate distribution, distribution of eigenvalues of Hecke operators, Maass forms}
\subjclass[2016]{Primary 11F11, 11F25, Secondary 11F30}

\maketitle

\section{Introduction}
The statistical distribution of eigenvalues of the Hecke operators acting on spaces of modular cusp forms and Maass forms has been well investigated in recent years (\cite{BGHT},\,\cite{Sarnak},\,\cite{Serre}).  Among the early developments that motivated this study was a famous conjecture, stated independently by M. Sato and J. Tate around 1960. This conjecture predicted a distribution law for the second order terms in the expression for the number of points in a non-CM elliptic curve modulo a prime $p$ as the primes vary. Serre \cite{Serre1} generalised this conjecture in 1968 to the context of modular forms.  The modular version of the Sato-Tate conjecture can be understood as follows:

Let $k$ be a positive even integer and $N$ be a positive integer.  Let  $S(N,k)$ denote the space of modular cusp forms of weight $k$ with respect to $\Gamma_0(N).$  For $n \geq 1,$ let $T_n$ denote the $n$-th Hecke operator acting on $S(N,k).$  We denote the set of all newforms  in $S(N,k)$  by $\mathcal F_{N,k}.$  Any $f(z) \in \mathcal F_{N,k}$ has a Fourier expansion
$$f(z) = \sum_{n=1}^{\infty} {n^{\frac{k-1}{2}}}a_f(n) q^n,$$
where 
$a_f(1) = 1$ and
$$\frac{T_n(f(z))}{n^{\frac{k-1}{2}}} = a_f(n) f(z),\,n \geq 1.$$
Let $p$ be a prime number such that $\gcd\,(p,N) = 1.$  By a theorem of Deligne \cite{Deligne}, the eigenvalues $a_f(p)$ lie in the interval $[-2,2].$  One can study the distribution of the coefficients $a_f(p)$ in different ways:
\begin{enumerate}
\item [(A)] (Sato-Tate family) Let $N$ and $k$ be fixed and let $f(z)$ be a non-CM newform in $\mathcal F_{N,k}.$  We consider the sequence $\{a_f(p)\}$ as $p \to \infty.$
\item [(B)](Vertical Sato-Tate family) For a fixed prime $p,$ we consider the families 
$$\{a_f(p),\,f \in \mathcal F_{N,k}\},\,|\mathcal F_{N,k}| \to \infty.$$  
\item [(C)](Average Sato-Tate family) We consider the families 
$$\{a_f(p),\,p \leq x,\,f \in \mathcal F_{N,k}\},\,|\mathcal F_{N,k}| \to \infty,\,x \to \infty.$$  
\end{enumerate}
Serre's modular version of the Sato-Tate conjecture predicts a distribution law for the sequence defined in (A).  More explicitly, let $I$ be a subinterval of $[-2,2]$ and for a positive real number $x$ and $f \in \mathcal F_{N,k},$ let
$$N_I(f,x): = \#\{p \leq x:\,\gcd\, (p,N) = 1,\,a_f(p) \in I\}.$$
The Sato-Tate conjecture states that for a fixed non-CM newform $f\in \mathcal F_{N,k},$ we have
$$\lim_{x \to \infty} \frac{N_I(f,x)}{\pi(x)} = \int_I\mu_{\infty}(t)dt,$$
where $\pi(x)$ denotes the number of primes less than or equal to $x$ and 
$$\mu_{\infty}(t) := 
\begin{cases}
\frac{1}{\pi}\sqrt{1 - \frac{t^2}{4}} &\text{ if }t \in [-2,2]\\
0 &\text{ otherwise.}
\end{cases}
$$
The measure $\mu_{\infty}(t)$ is referred to as the Sato-Tate or semicircle measure in the literature.  This conjecture has deep and interesting generalisations and has been a central theme in arithmetic geometry over the last few decades.  In 1970, Langlands \cite{Langlands} formulated a general automorphy conjecture which would imply the Sato-Tate conjecture.  This general automorphy conjecture is still open. However, using a very special case of the Langlands functoriality conjecture, M. R. Murty and V. K. Murty \cite{Murty-Murty} have shown that the general automorphy conjecture follows.

The Sato-Tate conjecture has now been proved in the highly celebrated work of Barnet-Lamb, Geraghty, Harris and Taylor \cite{BGHT}.  The methods in \cite{BGHT} to address the Sato-Tate conjecture are different from the approach of Langlands: the authors 
prove that the $L$-functions $L_m(s)$ associated to symmetric powers of $l$-adic representations ($l$ coprime to $N$) attached to $f$ are {\it potentially} automorphic.

If these $L$-functions are automorphic, then one can also obtain error terms in the Sato-Tate distribution.  In fact, under the condition that all symmetric power $L$-functions are automorphic and satisfy the Generalized Riemann Hypothesis,  V. K. Murty \cite{VKMurty} showed that for a non-CM newform $f$ of weight 2 and square free level $N,$ we have
$$N_I(f,x) = \pi(x)\int_I\mu_{\infty}(t)dt + \O\left(x^{3/4}\sqrt{\log\,Nx}\right).$$ 
Building on Murty's work, Bucur and Kedlaya \cite{BK} have obtained, under some analytic assumptions on motivic $L$-functions, an extension of the effective Sato-Tate error term for arbitrary motives. 
Recently, Rouse and Thorner \cite{RT} have generalised Murty's explicit result for all squarefree $N$ and even $k \geq 2,$ further improving the error term by a factor of  $\sqrt{\log\,Nx}.$

In 1987, Sarnak \cite{Sarnak} shifted perspectives and considered a vertical variant of the Sato-Tate conjecture in the case of primitive Maass cusp forms.  For a fixed prime $p,$ he obtained a distribution measure for the $p$-th coefficients of Maass Hecke eigenforms averaged over Laplacian eigenvalues.  

In 1997, Serre \cite{Serre} considered a similar vertical question for holomorphic Hecke eigenforms.  For a fixed prime $p,$ let  $|\mathcal F_{N,k}| \to \infty$ such that $k$ is a positive even integer and $N$ is coprime to $p.$  Let $I$ be a subinterval of $[-2,2]$ and 
$$N_I(N,k) := \#\{f \in \mathcal F_{N,k}:\,a_f(p) \in I \}.$$
Serre showed that
\begin{equation}\label{Serre-ed}
\lim_{|\mathcal F_{N,k}| \to \infty} \frac{N_I(N,k)}{|\mathcal F_{N,k}|} = \int_I \mu_p(t)dt,
\end{equation}
 where
 $$\mu_p(t) = 
\begin{cases}
\frac{p+1}{\pi}\frac{(1-t^2/4)^{1/2}}{(p^{1/2}+p^{-1/2})^2-t^2} & \text{ if }t \in [-2,2]\\
0 & \text{otherwise}.
\end{cases}$$
That is,
$$\mu_p(t) = \frac{(p+1)}{(p^{1/2}+p^{-1/2})^2-t^2}\mu_{\infty}(t).$$
The measure $\mu_p(t)$ is referred to as the $p$-adic Plancherel measure in the literature.  This theorem was independently proved by Conrey, Duke and Farmer \cite{CDF} for $N = 1.$ 

Since averaging over eigenforms provides us with an important tool namely, the Eichler-Selberg trace formula, the quantity $N_I(N,k)$ becomes easier to approach.  Error terms in Serre's theorem were obtained by M. R. Murty and K. Sinha \cite{MS}.  They prove that for a positive integer $N,$ a prime number $p$ coprime to $N$ and a subinterval $I$ of $[-2,2],$
\begin{equation*}
N_I(N,k) = |\mathcal F_{N,k}|\int_I \mu_p(t)dt + \O\left(\frac{|\mathcal F_{N,k}|\log p}{\log kN}\right).
\end{equation*}
In this note, we consider the families described in (C),
$$\{a_f(p),\,p \leq x,\,f \in \mathcal F_{N,k}\}$$
as $|\mathcal F_{N,k}| \to \infty$ and $x \to \infty.$  In other words, this is the Sato-Tate family (A) averaged over all newforms in $\mathcal F_{N,k}.$  In fact, in this direction, the following theorem was proved by Conrey, Duke and Farmer \cite{CDF}:  If $x \to \infty$ and $ k = k(x)$ satisfies $\frac{\log k}{x} \to \infty,$ then, for any subinterval $I$ of $[-2,2],$
 $$\lim_{x \to \infty} \dfrac{1}{|\mathcal F_{1,k}|}\sum_{f \in \mathcal F_{1,k}}\frac{N_I(f,x)}{\pi(x)} = \int_I\mu_{\infty}(t)dt.$$
Nagoshi \cite{Nagoshi} obtained the same asymptotic under weaker conditions on the growth of $k,$ namely, $k = k(x)$ satisfies $\frac{\log k}{\log x} \to \infty$ as $x \to \infty.$  An effective version of Nagoshi's theorem was proved by Wang \cite{Wang}.  Under the above mentioned conditions, he proves that
$$\dfrac{1}{|\mathcal F_{1,k}|}\sum_{f \in \mathcal F_{1,k}}\frac{N_I(f,x)}{\pi(x)} = \int_I\mu_{\infty}(t)dt + \O\left(\frac{\log x}{\log k} + \frac{\log x\log \log x}{x}\right).$$
We also note that although Conrey, Duke and Farmer \cite{CDF} and Nagoshi \cite{Nagoshi} state their ``average" Sato-Tate theorems for $N=1,$ one can easily generalise their techniques to $N >1$.  One can show that if $k$ runs over all positive even integers such that $\frac{\log k}{\log x} \to \infty$ as $x \to \infty,$ then
$$\lim_{x \to \infty} \dfrac{1}{|\mathcal F_{N,k}|}\sum_{f \in \mathcal F_{N,k}}\frac{N_I(f,x)}{\pi(x)} = \int_I\mu_{\infty}(t)dt.$$
In this note, for simplicity of computation and exposition, we assume that $N = 1.$  Henceforth, we denote $\mathcal F_{1,k}$ by $\mathcal F_k$ and $|\mathcal F_{1,k}|$ by $s_k.$  
 The ``average" Sato-Tate theorem tells us that for a fixed interval $I,$ the expected value of $N_I(f,x)$ as we vary $f \in \mathcal F_{k},$ 
 $$E[N_I(f,x)] :=\frac{1}{s_k}\sum_{f \in \mathcal F_{k}} N_I(f,x),$$
 is asymptotic to
$$\pi(x)\int_I\mu_{\infty}(t)dt$$
as $x \to \infty$ with $\frac{\log k}{\log x} \to \infty.$  It is therefore natural to ask what we can say about the fluctuations of $N_I(f,x)$ about the expected value.  In this direction, we prove that under appropriate conditions on the growth of $k = k(x),$ $N_I(f,x)$ has variance asymptotic to 
$$\pi(x)\left[\mu_{\infty}(I) - (\mu_{\infty}(I))^2\right],$$
where 
$$\mu_{\infty}(I) :=\int\limits_{I}\mu_\infty(t)dt.$$
Finally, when appropriately normalised, the limiting distribution of the random variable 
$$ \frac{N_I(f,x) - {\pi(x)}\mu_{\infty}(I)}{\sqrt{\pi(x) \left[\mu_{\infty}(I) - \left(\mu_{\infty}(I)\right)^2\right]}}$$  as $x \to \infty$ is Gaussian, provided the weight $k = k(x)$ grows appropriately faster than the range of primes $p \leq x.$  More precisely, we prove the following theorem:
\begin{thm}\label{main-eigenvalues}
Let $I = [a,b]$ be a fixed interval in $[-2,2].$  As defined above, for a positive real number $x$ and $f \in \mathcal F_{k},$ let
$$N_I(f,x) = \#\{p \leq x:\, a_f(p) \in I\}.$$
Suppose that $k=k(x)$ satisfies $\frac{\log k}{\sqrt{x} \log x} \rightarrow \infty$ as $x \rightarrow \infty$.
Then for any bounded, continuous, real-valued function $g$ on $\R,$ we have
\begin{equation*}
\lim_{x \to \infty} \dfrac{1}{s_k}\sum\limits_{f\in \mathcal{F}_{k}} g \left( \frac{N_I(f,x) - {\pi(x)}\mu_{\infty}(I)}{\sqrt{\pi(x) \left[\mu_{\infty}(I) - \left(\mu_{\infty}(I)\right)^2\right]}}\right) = \dfrac{1}{\sqrt{2 \pi}} \int\limits_{-\infty}^{\infty}g(t)e^{-\frac{t^2}{2}}dt.   
\end{equation*}
In other words, for any real numbers $A < B,$  
$$\lim_{x \to \infty} \text{ Prob }_{\mathcal F_{k}} \left(A < \frac{N_I(f,x) - {\pi(x)}\mu_{\infty}(I)}{\sqrt{\pi(x) \left[\mu_{\infty}(I) - \left(\mu_{\infty}(I)\right)^2\right]}} < B \right) = \frac{1}{\sqrt{2\pi}}\int_A^B e^{-t^2/2}dt.$$
\end{thm}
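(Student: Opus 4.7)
The plan is to prove Theorem 1.1 by the method of moments. Since the standard normal distribution is determined by its moments, it suffices to show that for each integer $r \geq 0$ the normalized $r$-th moment
\[
\frac{1}{s_k}\sum_{f \in \mathcal F_k}\left(\frac{N_I(f,x) - \pi(x)\mu_\infty(I)}{\sigma(x)}\right)^{\!r}
\]
converges to $\frac{1}{\sqrt{2\pi}}\int_{-\infty}^{\infty} t^{r} e^{-t^{2}/2}\,dt$, where $\sigma(x)^{2} := \pi(x)[\mu_\infty(I)-\mu_\infty(I)^{2}]$; the statement for bounded continuous $g$ then follows by standard moment-to-distribution arguments.

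First, I would spectrally decompose the indicator $\mathbf{1}_I$ with respect to the Sato-Tate measure. The rescaled Chebyshev polynomials of the second kind $U_n(t/2)$ form an orthonormal basis of $L^{2}([-2,2],\mu_\infty)$ and satisfy the pivotal Hecke identity $U_n(a_f(p)/2)=a_f(p^{n})$. Writing $\mathbf{1}_I(t)-\mu_\infty(I) = \sum_{n\geq 1} c_n U_n(t/2)$ with $c_n = \int_I U_n(t/2)\,\mu_\infty(t)\,dt$, this formally gives
\[
N_I(f,x) - \pi(x)\mu_\infty(I) = \sum_{n\geq 1} c_n \sum_{p\leq x} a_f(p^{n}).
\]
Since $\mathbf{1}_I$ is discontinuous, I would first replace it by Beurling-Selberg majorants and minorants of polynomial degree $M=M(x)$, so the identity holds modulo controlled $L^{1}(\mu_\infty)$-errors of size $\O(1/M)$.

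The core computation is the $r$-th moment, handled via the Eichler-Selberg trace formula, which yields $\frac{1}{s_k}\sum_{f \in \mathcal F_k} a_f(m) = \delta_{m,1} + E(k,m)$ with an error decaying in $k$ relative to $m$. Expanding the $r$-th power, applying Hecke multiplicativity across distinct primes and the Chebyshev product rule $a_f(p^{a})a_f(p^{b}) = \sum_{j=0}^{\min(a,b)} a_f(p^{a+b-2j})$ at coincident primes, each inner average reduces to $\frac{1}{s_k}\sum_f a_f(N)$ for an integer $N$ built from the prime-tuple pattern and Chebyshev indices. The main term $N=1$ is produced exactly when the $r$ primes split into pairs with matching Chebyshev indices (Wick pairings): for even $r$ this contributes $(r-1)!!\cdot \pi(x)^{r/2}\bigl(\sum_n c_n^{2}\bigr)^{r/2}$, and by Parseval $\sum_n c_n^{2} = \mu_\infty(I)-\mu_\infty(I)^{2}$, reproducing exactly the $r$-th moment of a centered Gaussian with variance $\sigma(x)^{2}$; for odd $r$ the leading contribution vanishes.

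The principal obstacle is uniformly controlling the trace-formula error across the $r$-th moment. The expansion forces one to bound $E(k,m)$ for $m$ as large as $\prod_j p_j^{n_j} \leq x^{rM}$, and the standard bound is of the shape $m^{1/2}(\log km)/k$; summed over prime tuples and all Chebyshev frequencies $n_j\leq M$, this demands $\log k$ to dominate a constant times $rM\log x$. Meanwhile, to push the Beurling-Selberg approximation close enough to $\mathbf{1}_I$ so that the aggregate error over $\pi(x)$ summands is $\o(\sigma(x))$, one must take $M$ of order $\sqrt{x}$ up to logarithmic factors. The two requirements together reproduce exactly the hypothesis $\log k / (\sqrt{x}\log x) \to \infty$. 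Balancing the truncation degree $M$, the off-diagonal cancellation, and the trace-formula error is therefore the main technical difficulty; once the diagonal has been correctly isolated, the combinatorial reduction to Wick's theorem for the Gaussian moments is essentially automatic.
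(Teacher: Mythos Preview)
Your approach is essentially the paper's: Beurling--Selberg approximation of the indicator, Chebyshev/Hecke expansion $a_f(p^n)=U_n(a_f(p)/2)$, Eichler--Selberg to evaluate the averages $\langle a_f(N)\rangle$, and a moment computation isolating the ``pairing'' partition $(2,2,\dots,2)$. The choice $M\asymp\sqrt{\pi(x)}$ and the resulting growth condition $\log k/(\sqrt{x}\log x)\to\infty$ are also correctly identified.

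However, you underestimate the main technical difficulty when you write that ``the combinatorial reduction to Wick's theorem \dots\ is essentially automatic.'' It is not. When several of the $r$ primes coincide, repeated application of the product rule $a_f(p^a)a_f(p^b)=\sum_j a_f(p^{a+b-2j})$ produces coefficients $D_{r_i,\underline{m}_i}(t)$ (the multiplicity of $a_f(p^t)$ in $\prod_l a_f(p^{m_l})$) that \emph{grow} with the truncation degree $M$: for $r_i\geq 3$ one only has $D_{r_i}(t)=O(M^{r_i-2})$ for $t\geq 1$ and $D_{r_i}(0)=O(M^{r_i-3})$. Since $M\asymp\sqrt{\pi(x)}$, these powers of $M$ compete directly with the $\pi(x)^{-n/2}$ normalization, and showing that every partition other than $(2,\dots,2)$ contributes $o(1)$ requires exactly these bounds together with the estimate $\sum_{\underline{m}}|\U(\underline{m})|=O((\log M)^r)$. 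This is the content of the paper's Proposition~7.1 and Proposition~7.4, and it is where the real work lies; the Wick/pairing heuristic only tells you what the answer should be.

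A second point you elide is the passage from the majorant/minorant polynomials back to $N_I(f,x)$. Sandwiching by $S^{\pm}_M$ does not directly control the $r$-th moment of $N_I(f,x)-\pi(x)\mu_\infty(I)$ for $r\geq 2$; the paper closes this gap by proving mean-square convergence of the normalized $S^{\pm}(M,f)(x)$ to the normalized $N_I(f,x)-\pi(x)\mu_\infty(I)$ (Proposition~6.1), and then invoking the fact that $L^2$-convergence transfers convergence in distribution. Your phrase ``controlled $L^1(\mu_\infty)$-errors of size $O(1/M)$'' does not by itself yield this.
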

\subsection{Harmonic averaging}\label{Harmonic-averaging-theorem}
We can also consider a weighted variant of the statistical questions posed in this article.  Instead of uniformly averaging over cusp forms in $\mathcal F_k,$ we consider the case of harmonic averaging.  That is, for $f \in \mathcal F_k,$ we denote
$$\omega_f := \frac{\Gamma(k-1)}{(4\pi)^{k-1}\langle f,f \rangle},$$
where $\langle f,g \rangle$ denotes the Petersson inner product of $f,\,g \in S_k.$  We define
$$h_k := \sum_{f \in \mathcal F_k}\omega_f.$$
For a function $\phi:\,S_k \to \C,$
we denote its harmonic average as follows:
$$\langle \phi(f) \rangle_{h_k} := \frac{1}{h_k}\sum_{f \in \mathcal F_k}\omega_f \phi(f).$$  We can prove the following analogue of Theorem \ref{main-eigenvalues} with harmonic weights attached to the quantities in consideration.
\begin{thm}\label{main-eigenvalues-harmonic}
Let $I = [a,b]$ be a fixed interval in $[-2,2].$  Suppose that $k=k(x)$ satisfies $\frac{\log k}{ \sqrt{x}\log x} \rightarrow \infty$ as $x \rightarrow \infty$.
Then for any bounded, continuous, real-valued function $g$ on $\R,$ we have
\begin{equation*}
\lim_{x \to \infty} \dfrac{1}{h_k}\sum\limits_{f\in \mathcal{F}_{k}} \omega_f \, g \left( \frac{ N_I(f,x) - {\pi(x)}\mu_{\infty}(I)}{\sqrt{\pi(x) \left[\mu_{\infty}(I) - \left(\mu_{\infty}(I)\right)^2\right]}}\right) = \dfrac{1}{\sqrt{2 \pi}} \int\limits_{-\infty}^{\infty}g(t)e^{-\frac{t^2}{2}}dt.   
\end{equation*}
\end{thm}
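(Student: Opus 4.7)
My plan is to establish Theorem \ref{main-eigenvalues-harmonic} via the method of moments (Fr\'echet-Shohat): it suffices to show that for every non-negative integer $r$,
\begin{equation*}
\lim_{x \to \infty} \left\langle \left(\frac{N_I(f,x) - \pi(x)\mu_\infty(I)}{\sqrt{\pi(x)[\mu_\infty(I) - \mu_\infty(I)^2]}}\right)^r \right\rangle_{h_k} = \mu_r,
\end{equation*}
where $\mu_r = (r-1)!!$ if $r$ is even and $\mu_r = 0$ if $r$ is odd.

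First I would write $\chi_I(t) := \mathbf{1}_I(t) - \mu_\infty(I)$ and record that the Chebyshev polynomials $U_n(t/2)$ of the second kind form an orthonormal basis for $L^2([-2,2],\mu_\infty)$, so $\chi_I$ admits an $L^2$-expansion $\chi_I(t) = \sum_{n \geq 1} c_n U_n(t/2)$ with vanishing constant term and, by Parseval, $\sum_{n \geq 1} c_n^2 = \mu_\infty(I) - \mu_\infty(I)^2$. To obtain pointwise control I would employ Beurling-Selberg-type sandwiching polynomials $\chi_I^{\pm}(t) = \sum_{n=0}^{M} c_n^{\pm}\, U_n(t/2)$ of degree $M$ satisfying $\chi_I^{-} \leq \mathbf{1}_I - \mu_\infty(I) \leq \chi_I^{+}$ and $\int(\chi_I^{+}-\chi_I^{-})\,d\mu_\infty \ll 1/M$, with $|c_n^{\pm}| \ll 1/n$; such polynomials appear for instance in the effective equidistribution result of M. R. Murty and K. Sinha \cite{MS}.

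Using the Hecke identity $U_n(a_f(p)/2) = a_f(p^n)$, the $r$-th centered moment reduces to a weighted sum of harmonic averages
\begin{equation*}
\Big\langle \prod_{i=1}^{r} a_f(p_i^{n_i}) \Big\rangle_{h_k}, \qquad p_i \leq x,\ 1 \leq n_i \leq M.
\end{equation*}
By Hecke multiplicativity, each such product decomposes into a bounded integer combination of terms $a_f(m)$ with $m \leq \prod_i p_i^{n_i}$. Applying the Petersson trace formula
\begin{equation*}
\Big\langle a_f(m)\, a_f(n) \Big\rangle_{h_k} = \delta_{m,n} + 2\pi\, i^{-k}\sum_{c \geq 1}\frac{S(m,n;c)}{c}\, J_{k-1}\!\left(\frac{4\pi\sqrt{mn}}{c}\right)
\end{equation*}
isolates a clean diagonal main term and a Kloosterman error; the Weil bound for $S(m,n;c)$ together with $J_{k-1}(y) \ll (ey/(2k))^{k-1}$ makes that error essentially a negative power of $k$, and the hypothesis $\log k/(\sqrt{x}\log x) \to \infty$ is exactly what is needed for this decay to swamp the combinatorial cost of summing over all tuples $(p_i, n_i)$ of length $r$ with $p_i \leq x$ and $n_i \leq M$.

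Carrying out the combinatorics on the surviving diagonal, the centering $\chi_I$ kills the $U_0 \equiv 1$ coefficient, so any prime $p_i$ appearing only once among $p_1,\ldots,p_r$ contributes $0$; only Wick-type pairings of primes survive. A matched pair $\{i,j\}$ with $p_i=p_j$ produces, via Parseval in the Chebyshev basis, the factor $\sum_{n \geq 1} c_n^2 = \mu_\infty(I) - \mu_\infty(I)^2$, while clusters of three or more coincident primes contribute $\O(\pi(x)^{r/2 - 1/2})$ and are absorbed after normalisation. Summing over the $(r-1)!!$ perfect matchings of $\{1,\ldots,r\}$ for even $r$ gives exactly
\begin{equation*}
(r-1)!!\, \Big(\pi(x)\big[\mu_\infty(I) - \mu_\infty(I)^2\big]\Big)^{r/2} + o\!\left(\pi(x)^{r/2}\right),
\end{equation*}
and for odd $r$ some cluster must have odd size $\geq 3$, producing only $o(\pi(x)^{r/2})$.

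The main obstacle will be the simultaneous calibration of the truncation parameter $M$, the Beurling-Selberg sandwiching gap $\O(1/M)$, and the Petersson error, the last of which grows with both $M$ (higher Hecke powers yield larger arguments in the Bessel function) and the moment order $r$ (more tuples to sum over). Tracking these dependencies uniformly in $r$ is what pins down the exact hypothesis $\log k/(\sqrt{x}\log x) \to \infty$, and it is here that harmonic weighting helps decisively: the Petersson trace formula gives a much cleaner arithmetic identity for $\langle a_f(m) a_f(n)\rangle_{h_k}$ than the Eichler-Selberg trace formula required for Theorem \ref{main-eigenvalues}, so the errors to manage are correspondingly less delicate.
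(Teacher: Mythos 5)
Your proposal matches the route the paper itself indicates for Theorem \ref{main-eigenvalues-harmonic}: the authors state in Section \ref{remarks-proof} that one repeats the proof of Theorem \ref{main-eigenvalues} with the Eichler-Selberg trace formula replaced by the Petersson trace formula of \cite[Section~2]{ILS}, and that is exactly the plan you describe (method of moments, Beurling-Selberg/Chebyshev approximation, Hecke multiplicativity, Petersson with Weil and Bessel bounds, Wick-type pairing combinatorics). Your phrasing of the approximation in the Chebyshev basis for $L^2([-2,2],\mu_\infty)$ is equivalent to the paper's change of variables $t = 2\cos 2\pi\theta$ and is a fine way to set things up.

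Two technical steps in your sketch, however, require the care the paper takes in Sections \ref{convergence-distribution} and \ref{odd-m}, and you should not gloss over them. First, since $(\chi_I^-)^r \leq (\mathbf{1}_I - \mu_\infty(I))^r \leq (\chi_I^+)^r$ fails for odd $r$, the Beurling-Selberg sandwich cannot simply be raised to the $r$-th power; the moments you actually compute via the Petersson formula are those of the trigonometric polynomial $S^{\pm}(M,f)(x)$, and these must be transferred back to $N_I(f,x) - \pi(x)\mu_\infty(I)$ by a mean-square convergence argument (the harmonic analogue of Proposition \ref{mean-squares}), rather than directly by sandwiching. Second, your assertion that clusters of three or more coincident primes contribute $\O\left(\pi(x)^{r/2-1/2}\right)$ is correct but not free: the coefficient $D_{r_i,\underline{m}_i}(0)$ of $a_f(1)$ in the Hecke expansion of $\prod_j a_f(p^{m_j})$ grows like a power of $M$, and the key estimate, proved in Proposition \ref{Dt}, is that $D_{r_i,\underline{m}_i}(0) = \O\left(M^{r_i-3}\right)$ for $r_i \geq 3$ --- one full power of $M$ better than the generic coefficient $\O\left(M^{r_i-2}\right)$ --- and it is precisely this saving, combined with the choice $M = \lfloor\sqrt{\pi(x)}\log\log x\rfloor$, that makes the cluster terms subdominant after normalisation. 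Both points carry over verbatim to the harmonic-weight setting once the trace formula input is swapped, so your outline is sound, but an honest write-up must include them.
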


\subsection{Maass cusp forms}\label{MaassForms}
The case of primitive Maass cusp forms admits a similar analysis to what we present in this article for holomorphic modular cusp forms.  We therefore make some observations in this case.  

Let $\mathcal C(\Gamma \backslash \mathfrak H)$ denote the space of Maass cusp forms with respect to $\Gamma = SL_2(\Z).$  Let $\{f_j:\,j \geq 0\}$ denote an orthonormal basis for $\mathcal C(\Gamma \backslash \mathfrak H),$ which consists of the simultaneous eigenforms of the non-Euclidean Laplacian operator $\Delta$ and Hecke operators $T_n,\,n \geq 1.$  Here, let $f_0$ denote the constant function.  For an eigenform $f_j,$ we have
$$\Delta f_j = \left(\frac{1}{4} + t_j^2\right)f_j,\quad T_n f_j = a_j(n)f_j.$$
For $z = x + iy \in \mathfrak H,$ each $f_j$ has the Fourier expansion
$$f_j(z) = \sqrt y \varrho_j(1) \sum_{n = 1}^{\infty} a_j(n) K_{it_j}(2\pi|n|y)e(nx),$$
where $a_j(n) \in \R,$ $\varrho_j(1) \neq 0$ and $K_{\nu}$ is the $K$-Bessel function of order $\nu.$  We order the $f_j$'s so that $0 < t_1 \leq t_2 \leq t_3 \leq \dots.$  It is well known, by a result of Weyl, that
\begin{equation}\label{Weyl} 
r(T):=\#\{j:\,0<t_j \leq T\} = \frac{1}{12}T^2 + \O(T\log T).
\end{equation}
The Ramanujan-Petersson conjecture, which is still open, is the assertion that for all primes $p,$
$$|a_j(p)| \leq 2.$$

For an interval $I = [a,b] \subset \R$ and for $1 \leq j \leq r(T),$ let us define
$$N_I(j,x) = \#\{p \leq x:\,a_j(p) \in I\}.$$
We have the following analogue of Theorem \ref{main-eigenvalues} for Maass cusp forms.
\begin{thm}\label{main-eigenvalues-Maass}
Suppose that $T=T(x)$ satisfies $ \frac{\log T}{\sqrt{x}\log x} \rightarrow \infty$ as $x \rightarrow \infty$.  Let $I = [a,b] $ be a fixed interval in $ \R.$  Then for any bounded, continuous, real-valued function $g$ on $\R,$ we have
\begin{equation*}
\lim_{x \to \infty} \dfrac{1}{r(T)}\sum\limits_{j=1}^{r(T)} g \left( \frac{N_I(j,x) - {\pi(x)}\mu_{\infty}(I)}{\sqrt{\pi(x) \left[\mu_{\infty}(I) - \left(\mu_{\infty}(I)\right)^2\right]}}\right) = \dfrac{1}{\sqrt{2 \pi}} \int\limits_{-\infty}^{\infty}g(t)e^{-\frac{t^2}{2}}dt.   
\end{equation*}
In other words, for any real numbers $A < B,$ 
$$\lim_{x \to \infty} \text{ Prob }_{1 \leq j \leq r(T)} \left(A < \frac{N_I(j,x) - {\pi(x)}\mu_{\infty}(I)}{\sqrt{\pi(x) \left[\mu_{\infty}(I) - \left(\mu_{\infty}(I)\right)^2\right]}} < B \right) = \frac{1}{\sqrt{2\pi}}\int_A^B e^{-t^2/2}dt.$$
\end{thm}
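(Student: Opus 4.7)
The plan is to mirror the proof strategy of Theorem~\ref{main-eigenvalues}: establish Gaussian convergence by the method of moments, expanding the indicator $\mathbf{1}_I$ in the Chebyshev basis orthonormal for $\mu_\infty$ and reducing all computations to vertical averages of Hecke eigenvalues, but now with the Eichler--Selberg trace formula replaced by its Maass-form analogue, the Kuznetsov (Bruggeman--Kuznetsov) trace formula, which controls sums of the shape $\sum_{j=1}^{r(T)} a_j(n)$ as $T \to \infty.$

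First, I would approximate $\mathbf{1}_I$ by a truncated Chebyshev expansion $P_M(t) = \sum_{m=0}^{M} c_m(I)\, U_m(t/2),$ where $U_m$ is the Chebyshev polynomial of the second kind, so that $\{U_m(t/2)\}_{m \ge 0}$ is orthonormal for $\mu_\infty.$ The Hecke identity $U_m(a_j(p)/2) = a_j(p^m)$ together with multiplicativity $a_j(m)a_j(n) = \sum_{d \mid \gcd(m,n)} a_j(mn/d^2)$ reduces the $2\ell$-th centred moment of
$$\frac{N_I(j,x) - \pi(x)\mu_\infty(I)}{\sqrt{\pi(x)(\mu_\infty(I) - \mu_\infty(I)^2)}}$$
averaged over $1 \le j \le r(T)$ to a finite linear combination of averages $\frac{1}{r(T)}\sum_{j=1}^{r(T)} a_j(N)$ for integers $N$ arising from tuples $(p_1,\dots,p_{2\ell})$ with $p_i \le x$ and exponents $m_i \le M.$ An effective Kuznetsov-type vertical equidistribution theorem yields $\frac{1}{r(T)}\sum_j a_j(N) = \delta_{N,1} + E(N,T),$ with $E(N,T)$ polynomial in $N$ and decaying in $T.$ The leading ``diagonal'' contribution comes from tuples in which the primes pair up, and the resulting combinatorics reproduces the double factorial $(2\ell-1)!!$ for even moments while odd moments vanish; the variance computation, via Parseval in the Chebyshev basis, recovers $\pi(x)(\mu_\infty(I) - \mu_\infty(I)^2).$ Passing from moment convergence to expectations against bounded continuous $g$ is standard: Chebyshev's inequality applied to the already-controlled second moment gives tightness of the laws, and moment convergence to a determinate Gaussian forces weak convergence.

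The main technical obstacle is the effective Kuznetsov-type error bound: one needs $E(N,T)$ polynomial in $N$ with decay in $T$ strong enough to absorb the $\O(\pi(x)^{2\ell})$ tuples of primes together with the truncation parameter $M.$ The hypothesis $\log T / (\sqrt{x}\log x) \to \infty$ is precisely what makes this cumulative error negligible; its verification rests on Weil-type bounds for the Kloosterman sums and careful control of the Bessel transforms on the spectral side of the Kuznetsov formula. A secondary but genuine point is that the vertical measure at a single prime $p$ is $\mu_p,$ not $\mu_\infty,$ so the replacement of $\mu_p$ by $\mu_\infty$ in the main term must be tracked; since $\mu_p = \mu_\infty + \O(p^{-1}),$ the aggregate discrepancy summed over primes $p \le x$ contributes only to lower-order terms, far below the $\sqrt{\pi(x)}$ normalisation.
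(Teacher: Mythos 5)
Your plan captures the broad architecture of the paper's argument: method of moments, vertical equidistribution via a Kuznetsov/Bruggeman-type trace formula in place of Eichler--Selberg, combinatorial matching to the Gaussian moments, and a variance normalisation of $\pi(x)(\mu_\infty(I)-\mu_\infty(I)^2)$. But there is a concrete gap that the paper explicitly flags and that your proposal never confronts: Maass cusp forms are \emph{not} known to satisfy the Ramanujan--Petersson conjecture, so the Hecke eigenvalues $a_j(p)$ may fall outside $[-2,2]$. For such $p$ there is no real angle $\theta$ with $a_j(p) = 2\cos\theta$, so the identity $U_m(a_j(p)/2) = a_j(p^m)$ and the entire Chebyshev/Beurling--Selberg bookkeeping simply does not apply pointwise. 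The paper deals with this by invoking Sarnak's density theorem for exceptional eigenvalues, which shows that the number of $j \le r(T)$ with $a_j(p) \notin [-2,2]$ is small enough (for each prime $p$, and cumulatively over $p \le x$) that their contribution to the moments is negligible under the growth hypothesis on $T$. Without some version of this step, the reduction to averages $\frac{1}{r(T)}\sum_j a_j(N)$ and the subsequent diagonal analysis are not justified, since the approximating polynomial $P_M$ evaluated at a point off $[-2,2]$ can be arbitrarily large in $M$.

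A secondary, genuinely different choice in your proposal is to replace the paper's Beurling--Selberg majorant/minorant pair $S_M^\pm$ with a truncated $L^2(\mu_\infty)$-Chebyshev expansion $P_M$. Beurling--Selberg supplies a two-sided pointwise sandwich with $\|S_M^\pm - \chi_I\|_{L^1} \le \frac{1}{M+1}$, which the paper exploits directly to trap $N_I(j,x) - \pi(x)\mu_\infty(I)$ between the explicit polynomial sums $S^\pm(M,f)(x)$; the moments of the latter are then computed exactly and matched to Gaussian moments. A plain truncated Chebyshev series for the discontinuous $\chi_I$ converges only in $L^2$ and suffers Gibbs overshoot near the endpoints, so it does not yield such one-sided bounds. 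Consequently you would need to control \emph{all} higher moments of the discrepancy $\sum_{p\le x}(P_M-\chi_I)(a_j(p))$ before you could transfer moment convergence of the approximant to moment convergence of the actual counting function; your sketch only asserts a variance-level estimate for the approximation error, which is insufficient for the $2\ell$-th moments. This is a fixable but nontrivial technical point, and Beurling--Selberg is the device the paper uses precisely to avoid it.
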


\subsection{Probabilistic motivation and interpretation}
In order to place Theorem \ref{main-eigenvalues} in the framework of central limit theorems, we may interpret $N_I(f,x)$ as a sum of random variables.  For an even  positive integer $k \geq 2$ and a prime $p,$ we define
$$X_{k,p}(f) := \chi_{I}(a_f(p)),\,f \in \mathcal F_k.$$
Here, $\chi_I$ denotes the characteristic function of the interval $I.$
We now have a double array of random variables $X_{k,p}$ parametrised by the sets $\mathcal F_k$ and primes $p,$ each with expected value, say, $\nu_{k,p}$ and variance $\sigma^2_{k,p}.$ $N_I(f,x)$ can be thought of as the sum of random variables $\sum_{p \leq x} X_{k,p}.$  In the context of central limit-type theorems, it is natural to ask if the random variable
\begin{equation}\label{rv}
\frac{\sum_{p \leq x} \left(X_{k,p} - \nu_{k,p}\right)} {\sqrt{\sum_{p \leq x} \sigma^2_{k,p}}}
\end{equation}
 tends to a normal distribution as $x \to \infty.$   A theorem of Lyapounov \cite[Theorem 27.3]{Billingsley} gives us sufficient conditions for the above to happen.  In our context, we index the rows with weights $k$ and choose $x \leq k$ in each row.  If $X_{k,p}$'s are mutually independent for each $k,$ this theorem of Lyapounov states that if there exists $\delta > 0$ such that
 $$\lim_{ k \to \infty} \frac{\sum_{p \leq x} E\left[\left | X_{k,p} - \nu_{k,p}\right|^{2 + \delta}\right]}{\left(\sum_{p \leq x} \sigma^2_{k,p}\right)^{1+ \frac{\delta}{2}}} = 0,$$
 then the random variable in (\ref{rv}) tends to a normal distribution as $x \to \infty.$  One could show that under appropriate growth conditions on $k$ with respect to $x,$ the above asymptotic holds for $\delta = 2.$  However, we cannot apply Lyapounov's condition since the random variables $X_{k,p}$ are not quite independent. On the other hand, it does give us motivation to investigate whether the sequence (\ref{rv}) tends to a normal distribution under suitable hypothesis. 
  Therefore, Theorem \ref{main-eigenvalues} and its variants can be interpreted as a central limit theorem that holds under additional hypothesis on the growth of $k$ with respect to $x$.

\subsection{Remarks on proofs.} \label{remarks-proof}
Following the spirit of other central limit throems proved in number theory, such as the Erd\"{o}s-Kac theorem, the method of moments proves to be useful.
The main technique used in the proof of Theorem \ref{main-eigenvalues} is the approximation of $N_I(f,x)$ by certain trigonometric polynomials called the Beurling-Selberg polynomials. We then estimate the exponential sums associated to Hecke eigenvalues that arise in these polynomials via the Eichler-Selberg trace formula.  These polynomials were used by M.R. Murty and Sinha \cite{MS} and by Wang \cite{Wang} to obtain error terms in families (B) and (C) respectively.  In this article, we use this technique in a more refined way: we compute moments of functions arising from the Beurling-Selberg polynomials which give approximations to higher moments of $N_I(f,x) - \pi(x)\mu_{\infty}(I).$  The moments of these modified approximating functions are shown to match those of the Gaussian distribution after suitable normalisation.  This refined technique owes its origin to the work of Faifman and Rudnick \cite{FR}, who used it to prove a central limit theorem for the number of zeros of the zeta functions of a family of hyperelliptic curves defined over a fixed finite field as the genus of the curves varies.  The ideas of Faifman and Rudnick have since been fruitfully adapted by various authors (for example, \cite{BDFL},\,\cite{BDFLS},\,\cite{Xiong}) to study similar statistics for different families of smooth projective curves over finite fields. 
  
Nagoshi \cite{Nagoshi} proved another remarkable theorem.  He showed that if $k = k(x)$ satisfes $\frac{\log k}{\log x} \to \infty$ as $x \to \infty,$ then for any bounded continuous real function $h$ on $\R,$ 
$$\lim_{x \to \infty}\frac{1}{|\mathcal F_{k}|}\sum_{f \in \mathcal F_{k}}h\left(\frac{\sum_{p \leq x} a_f(p)}{\sqrt{\pi(x)}} \right) = \frac{1}{\sqrt{2 \pi}} \int\limits_{-\infty}^{\infty}h(t)e^{-\frac{t^2}{2}}dt.$$

In this article, we consider the statistics of $\sum_{p \leq x} \chi_{I} (a_f(p)$ for a fixed interval $I$ as opposed to $\sum_{p\leq x} a_f(p)$ as $f$ is picked up at random from $\mathcal F_k.$ However, we do borrow some combinatorial ideas from Nagoshi's proof in Section \ref{odd-m} of this paper.

The proofs of Theorems \ref{main-eigenvalues-harmonic} and \ref{main-eigenvalues-Maass} are very similar to that of Theorem \ref{main-eigenvalues}.  The key difference is in the trace formulas used to estimate the exponential sums arising from the Beurling-Selberg approximation for these families.  Hence, we shall omit the proofs.  For Theorem \ref{main-eigenvalues-harmonic}, one uses a trace formula of Petersson (see \cite[Section 2]{ILS}).  For the case of Maass forms, one uses an unweighted version of the Kuznetsov trace formula. This has been derived by Lau and Wang (\cite[Lemma 3.3]{LW}).   We also make a note that we do not assume the Ramanujan-Petersson conjecture in Theorem \ref{main-eigenvalues-Maass}.  Therefore, in treating the case of Maass forms, we have to take adequate care of the contribution of the ``exceptional" eigenvalues $a_j(p),$ that is, those eigenvalues which could possibly lie outside the interval $[-2,2].$  This is done with the help of a result of Sarnak (\cite[Theorem 1]{Sarnak}) which estimates the density of such exceptional eigenvalues.

\subsection{Outline} In Section \ref{preliminaries}, we set up some notation and review some important properties of Hecke eigenvalues that will be needed in the proof of Theorem \ref{main-eigenvalues}.  In Section \ref{S-B-technique}, we describe the Beurling-Selberg polynomials and prove some results about the asymptotics of their Fourier coefficients.  In Section \ref{1-m}, we use the Beurling-Selberg polynomials to derive the expected value of $N_I(f,x)$ for $f \in \mathcal F_k$ and obtain error terms in the theorem of Nagoshi.  In Section \ref{2-m}, we derive the second central moment of $N_I(f,x)$. In Section \ref{convergence-distribution}, we describe the strategy for the proof of Theorem \ref{main-eigenvalues}.  We show that in order to prove Theorem \ref{main-eigenvalues}, it is sufficient to derive the higher odd and even moments of our modified approximating functions for $N_I(f,x) - \pi(x)\mu_{\infty}(I).$  In Section \ref{odd-m}, we derive these higher moments and deduce Theorem \ref{main-eigenvalues}.

\section{Preliminaries} \label{preliminaries}
In this section, we state fundamental results about modular forms and eigenvalues of Hecke operators that will be needed in the proof of Theorem \ref{main-eigenvalues}.  We start by recalling the following classical lemma which describes multiplicative relations between $a_f(p)$'s.

\begin{lem}\label{Hecke-multiplicative}
Let $f \in \mathcal F_k.$ For primes $p_1,\,p_2 $ and non-negative integers $i,\,j,$ 
$$ a_f(p_1^i)a_f(p_2^j) = 
\begin{cases}
a_f(p_1^ip_2^j) &\text{ if }p_1 \neq p_2\\
\sum_{l = 0}^{\min{(i,j)}}a_f(p_1^{i + j - 2l}) &\text{ if }p_1 = p_2.
\end{cases}$$

\end{lem}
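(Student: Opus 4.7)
The plan is to handle the two cases of the lemma separately, both consequences of the standard Hecke algebra relations on $\mathcal F_k$.

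For the case $p_1 \neq p_2$, I would appeal to the classical multiplicativity of normalized Hecke eigenvalues for coprime indices. The Hecke operators satisfy $T_m T_n = T_{mn}$ whenever $\gcd(m,n) = 1$; applying this to $f$ with the normalization $T_n f = n^{(k-1)/2} a_f(n) f$, the common factor of $(mn)^{(k-1)/2}$ cancels, and setting $m = p_1^i$, $n = p_2^j$ yields the stated equality $a_f(p_1^i)a_f(p_2^j) = a_f(p_1^i p_2^j)$.

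For the case $p_1 = p_2 = p$, the key input is the Hecke recursion at prime powers, which in our normalization (and level $N=1$) reads
\[
a_f(p^{r+1}) = a_f(p)\, a_f(p^r) - a_f(p^{r-1}) \qquad (r \geq 1),
\]
with the conventions $a_f(p^0) = 1$ and (formally) $a_f(p^{-1}) = 0$. This follows from the operator identity $T_{p^{r+1}} = T_p T_{p^r} - p^{k-1} T_{p^{r-1}}$ once one divides through by $p^{(r+1)(k-1)/2}$. I would then argue by induction on $\min(i,j)$, assuming $i \leq j$ without loss of generality. The base case $i = 0$ is trivial. For the inductive step the recursion gives $a_f(p)\, a_f(p^i) = a_f(p^{i+1}) + a_f(p^{i-1})$, which allows one to rewrite $a_f(p^i)a_f(p^j)$ as a combination of products $a_f(p^{i'})a_f(p^{j'})$ with strictly smaller $\min(i',j')$; the inductive hypothesis then collapses the expression to the claimed sum $\sum_{l=0}^{i} a_f(p^{i+j-2l})$.

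An alternative route, perhaps cleaner to present, is via the Satake parametrization: factor $X^2 - a_f(p)X + 1 = (X - \alpha_p)(X - \beta_p)$ with $\alpha_p \beta_p = 1$, so that $a_f(p^n) = \sum_{l=0}^{n} \alpha_p^{n-l}\beta_p^l$. The product $a_f(p^i)a_f(p^j)$ becomes a double geometric sum in which the relation $\alpha_p \beta_p = 1$ collapses matching pairs of monomials, reducing the double sum to the claimed single sum indexed by $l = 0, 1, \dots, \min(i,j)$. There is no real obstacle in either approach; the entire content is bookkeeping with Hecke relations, and the only care needed is in lining up indices cleanly.
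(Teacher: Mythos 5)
The paper does not actually prove this lemma; it is stated as a recalled classical fact (``We start by recalling the following classical lemma\ldots'') with no argument given, so there is no paper proof to compare against. Your proof is correct, and both of the routes you sketch are standard and sound.

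For the coprime case, the operator identity $T_m T_n = T_{mn}$ for $\gcd(m,n)=1$ together with the normalization $T_n f = n^{(k-1)/2} a_f(n) f$ gives exactly the claim, as you say. For the $p_1 = p_2 = p$ case, your recursion $a_f(p^{r+1}) = a_f(p)\, a_f(p^r) - a_f(p^{r-1})$ (with $a_f(p^0) = 1$, $a_f(p^{-1}) = 0$) is the correct normalized form of $T_{p^{r+1}} = T_p T_{p^r} - p^{k-1} T_{p^{r-1}}$. The induction you outline works, though I would phrase the step slightly more carefully: assuming WLOG $i \leq j$ and inducting on $i$ (with base cases $i=0,1$), expand $a_f(p^i) = a_f(p)\,a_f(p^{i-1}) - a_f(p^{i-2})$, apply the inductive hypothesis to $a_f(p^{i-1})a_f(p^j)$ and $a_f(p^{i-2})a_f(p^j)$, and then use the recursion once more on each term $a_f(p)\,a_f(p^{i-1+j-2l})$; the resulting sums telescope to $\sum_{l=0}^i a_f(p^{i+j-2l})$. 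Your phrase ``strictly smaller $\min(i',j')$'' is a fair summary of this, even if the actual reduction goes through $i-1$ and $i-2$ rather than a single decrement.

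The Satake alternative is also fine and is, in spirit, what the paper does implicitly through Lemma 2.2 (the Chebyshev/$X_m$ formulation): with $\alpha_p\beta_p = 1$ one has $a_f(p^n) = \sum_{l=0}^n \alpha_p^{n-l}\beta_p^l$, and a short telescoping computation (most cleanly done by writing $\alpha_p = e^{i\theta}$, $a_f(p^n) = \sin((n+1)\theta)/\sin\theta$ and using the product-to-sum identity for $\sin$) reproduces the claimed Clebsch--Gordan-type formula. Either route is acceptable.
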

The recursive relations between $a_f(p^m)$'s for $m \geq 0$ can be elegantly encoded by the following lemma \cite[Lemma 1]{Serre}.
\begin{lem}\label{Chebyshev}
For a prime $p$ and $f \in \mathcal F_k,$ let $\thet$ be the unique angle in $[0,\pi]$ such that $a_f(p) = 2\cos \thet.$ For $m\geq 0,$
$$a_f(p^m) = X_m(a_f(p)),$$
where the $m$-th Chebyshev polynomial is defined as follows:
$$X_m(x) = \frac{\sin (m+1) \theta}{\sin \theta},\,x = 2\cos\theta.$$
\end{lem}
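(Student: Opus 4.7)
The plan is to prove the lemma by induction on $m$, using the multiplicative recursion supplied by Lemma \ref{Hecke-multiplicative} on one side and a matching trigonometric recursion for $\sin((m+1)\theta)/\sin\theta$ on the other.

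First, I would handle the base cases. For $m=0$, we have $a_f(p^0)=a_f(1)=1$, while $X_0(x) = \sin\theta/\sin\theta = 1$, so they agree. For $m=1$, $a_f(p) = 2\cos\thet$ by definition of $\thet$, and $X_1(x) = \sin(2\theta)/\sin\theta = 2\cos\theta$, so both sides equal $a_f(p)$.

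Next I would extract the key recursion on the arithmetic side. Taking $p_1=p_2=p$ and $i=1$, $j=m$ in Lemma \ref{Hecke-multiplicative} yields
\begin{equation*}
a_f(p)\,a_f(p^m) \;=\; a_f(p^{m+1}) \,+\, a_f(p^{m-1}) \qquad (m \geq 1),
\end{equation*}
or equivalently $a_f(p^{m+1}) = a_f(p)\,a_f(p^m) - a_f(p^{m-1})$. In parallel, I would record the standard trigonometric identity
\begin{equation*}
2\cos\theta \cdot \frac{\sin((m+1)\theta)}{\sin\theta} \;=\; \frac{\sin((m+2)\theta)}{\sin\theta} + \frac{\sin(m\theta)}{\sin\theta},
\end{equation*}
which is just the product-to-sum formula $2\cos\alpha\sin\beta = \sin(\alpha+\beta)+\sin(\beta-\alpha)$ applied with $\alpha=\theta$, $\beta=(m+1)\theta$. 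This says exactly that $X_{m+1}(x) = x\,X_m(x) - X_{m-1}(x)$ with $x=2\cos\theta$.

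Finally I would run the induction: assuming $a_f(p^j) = X_j(a_f(p))$ for all $j \leq m$, the arithmetic recursion gives
\begin{equation*}
a_f(p^{m+1}) = a_f(p)\,X_m(a_f(p)) - X_{m-1}(a_f(p)) = X_{m+1}(a_f(p)),
\end{equation*}
where the last equality is the trigonometric recursion above evaluated at $\theta = \thet$. This closes the induction. No step here is really an obstacle; the only mild point is to make sure the indexing on the $X_m$ matches the definition in the statement, since different sources shift the Chebyshev index by one. I would double-check at the base cases $m=0,1$ that $X_m(2\cos\theta) = \sin((m+1)\theta)/\sin\theta$ produces exactly $a_f(p^m)$, which indeed it does.
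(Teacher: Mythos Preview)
Your proof is correct. The paper does not actually supply its own proof of this lemma; it simply cites it as \cite[Lemma 1]{Serre}. Your inductive argument, using the Hecke recursion from Lemma~\ref{Hecke-multiplicative} together with the product-to-sum identity for sines, is the standard way to establish the result and matches what one would find in Serre's paper.
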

We observe that for $m \geq 2,$
$$2\cos m\theta = X_m(2\cos \theta) - X_{m-2}(2\cos \theta).$$
Thus, 
we have the following corollary to the above lemma.
\begin{cor}\label{cos-m}
With the same notation as in Lemma \ref{Chebyshev}, for $m \in \Z,\,m\neq 0,$
$$2 \cos (m \thet) =
\begin{cases} a_f(p)&\text{ if } |m| = 1\\
a_f(p^{|m|}) - a_f(p^{|m|-2})&\text{ if }|m| \geq 2.
\end{cases}
$$
\end{cor}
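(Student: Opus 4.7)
The plan is to deduce the corollary directly from Lemma \ref{Chebyshev} together with the trigonometric identity displayed in the line immediately preceding the statement. First, since cosine is even and the right-hand side of the claim depends only on $|m|$, it suffices to treat $m \geq 1$. The case $m = 1$ is then immediate from the very definition introduced in Lemma \ref{Chebyshev}, namely $a_f(p) = 2\cos\thet$.

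For $m \geq 2$, the strategy is to apply the difference-to-product identity
$$\sin((m+1)\theta) - \sin((m-1)\theta) = 2\cos(m\theta)\sin\theta.$$
Dividing both sides by $\sin\theta$ and recognising the Chebyshev polynomials $X_m$ and $X_{m-2}$ on the left-hand side gives the preceding observation $2\cos(m\theta) = X_m(2\cos\theta) - X_{m-2}(2\cos\theta)$. Setting $\theta = \thet$ so that $2\cos\theta = a_f(p)$, and invoking Lemma \ref{Chebyshev} to rewrite $X_m(a_f(p)) = a_f(p^m)$ and $X_{m-2}(a_f(p)) = a_f(p^{m-2})$, yields the claimed formula.

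There is essentially no obstacle here: the result is a one-line consequence of the preceding observation once Lemma \ref{Chebyshev} is in hand. The only subtlety worth noting is the base case $m = 2$, where one must use $X_0 \equiv 1$ together with the normalisation $a_f(1) = 1$ fixed at the beginning of the introduction. In that case the formula reduces to the compatibility of the Hecke relation $a_f(p^2) = a_f(p)^2 - 1$ (supplied by Lemma \ref{Hecke-multiplicative}) with the double-angle identity $2\cos(2\theta) = 4\cos^2\theta - 2$, which is reassuring rather than an obstruction. The sign symmetry $m \mapsto -m$ holds automatically since both sides depend only on $|m|$.
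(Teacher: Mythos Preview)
Your argument is correct and follows exactly the approach of the paper, which simply records the identity $2\cos m\theta = X_m(2\cos\theta) - X_{m-2}(2\cos\theta)$ for $m\ge 2$ (the line just before the corollary) and then invokes Lemma~\ref{Chebyshev}. Your added detail on the sum-to-product identity and the $m=2$ base case is accurate but not needed beyond what the paper already indicates.
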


\begin{prop}\label{trace}
Let $k$ be a positive even integer and $n$ be a positive integer.  We have
$$\sum_{f \in \mathcal F_k} a_f(n) = 
\begin{cases}
\frac{k-1}{12}\left(\frac{1}{\sqrt{n}}\right) + \O\left(n^c\right)&\text{ if }n\text{ is a square }\\
\O\left(n^c\right)&\text{ otherwise.}
\end{cases}
$$
Here, $c=\frac{1}{2} + \varepsilon$ and the implied constant in the error term is absolute.  
\end{prop}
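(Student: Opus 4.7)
The normalisation $T_n f = n^{(k-1)/2} a_f(n) f$ used in this paper means that the classical Hecke eigenvalue is $\lambda_f(n) = n^{(k-1)/2} a_f(n)$, so that
$$\sum_{f \in \mathcal F_k} a_f(n) \;=\; n^{-(k-1)/2}\,\text{Tr}\bigl(T_n \mid S_k(SL_2(\Z))\bigr),$$
and the plan is to apply the Eichler--Selberg trace formula on the right and divide through by $n^{(k-1)/2}$. The trace decomposes as an elliptic part $-\tfrac{1}{2}\sum_{t^2 \leq 4n} P_k(t,n)\,H(4n-t^2)$, where $P_k(t,n) := (\rho^{k-1}-\bar\rho^{k-1})/(\rho-\bar\rho)$ with $\rho,\bar\rho$ the roots of $X^2 - tX + n$ and $H$ the Hurwitz class number, plus a hyperbolic part $-\tfrac{1}{2}\sum_{dd'=n}\min(d,d')^{k-1}$, together with an identity contribution $\tfrac{k-1}{12}\,n^{k/2-1}$ that survives only when $n$ is a perfect square. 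The last piece, divided by $n^{(k-1)/2}$, produces exactly the claimed main term $\tfrac{k-1}{12\sqrt n}$.

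For the interior elliptic terms ($t^2 < 4n$) I would parametrise $t = 2\sqrt n\cos\theta$, so that $P_k(t,n) = n^{(k-2)/2}\sin((k-1)\theta)/\sin\theta$; this yields $|P_k(t,n)|/n^{(k-1)/2} \leq 2/\sqrt{4n-t^2}$ with no $k$-dependence whatsoever. Combined with the Gauss-type bound $H(4n-t^2) \ll (4n-t^2)^{1/2+\varepsilon}$, the elliptic contribution after normalisation satisfies
$$n^{-(k-1)/2}\sum_{t^2<4n}|P_k(t,n)|\,H(4n-t^2) \;\ll\; \sum_{|t|<2\sqrt n}(4n-t^2)^{\varepsilon} \;\ll\; n^{1/2+\varepsilon}.$$
For the hyperbolic sum, the ratio $\min(d,d')^{k-1}/(dd')^{(k-1)/2} = \bigl(\min(d,d')/\sqrt{dd'}\bigr)^{k-1}$ is bounded by $1$ for every divisor pair, so that the normalised hyperbolic contribution is $\ll d(n) = \O(n^{\varepsilon})$.

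The main obstacle is the careful bookkeeping at the boundary terms $t = \pm 2\sqrt n$ of the elliptic sum (where $\rho = \bar\rho$ and $P_k$ carries an explicit factor of $k-1$, evaluated against $H(0) = -\tfrac{1}{12}$) together with the diagonal term $d = d' = \sqrt n$ of the hyperbolic sum; both appear only for square $n$. These have to be combined with the identity contribution in such a way that all surviving $k$-dependence is packaged into the single term $\tfrac{k-1}{12\sqrt n}$, leaving an error of absolute order $\O(n^{1/2+\varepsilon})$. The structural reason an absolute constant is possible at all is that, away from the boundary, both $P_k(t,n)$ and $\min(d,d')^{k-1}$ are strictly dominated by $n^{(k-1)/2}$, and the corresponding ratios are bounded uniformly in $k$; once the square-$n$ boundary calculation is done correctly, the proposition follows with $c = \tfrac{1}{2}+\varepsilon$.
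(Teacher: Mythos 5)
Your approach is essentially the same as the paper's: both apply the Eichler--Selberg trace formula, extract the identity/scalar contribution as the main term $\tfrac{k-1}{12\sqrt n}$ for square $n$, bound the elliptic part by $n^{1/2+\varepsilon}$ via the uniform-in-$k$ estimate $|P_k(t,n)|/n^{(k-1)/2}\le 2/\sqrt{4n-t^2}$ combined with the Hurwitz class number bound, and bound the hyperbolic part by the divisor function. The one organizational difference is that the paper uses the version of the trace formula in which the elliptic sum runs over $t^2<4n$ strictly and the scalar term is already written out as a separate summand $B_1(n)$ (with a further explicit weight-2 correction $B_4(n)$), so the boundary bookkeeping at $t=\pm 2\sqrt n$ that you flag as the "main obstacle" never arises; if you instead use the $t^2\le 4n$ formulation your reassembly of the boundary elliptic terms with the identity contribution is indeed needed, and the two routes give the same answer. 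The paper also uses the slightly weaker Hurwitz estimate $H(l)\ll \sqrt l\,\log^2 n$ in place of your $H(l)\ll l^{1/2+\varepsilon}$; either suffices for the stated $\O(n^{1/2+\varepsilon})$.
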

\begin{proof}
This proposition follows from the Eichler-Selberg trace formula for Hecke operators $T_n,\,n\geq 1$ acting on $S_k.$  The Eichler-Selberg trace formula (see \cite[Sections 7, 8]{MS} and \cite[Section 4]{Serre}) states that for every integer $n \geq 1,$
$$\sum_{f \in \mathcal F_k} a_f(n) = \sum_{i=1}^4 B_i(n),$$
where $B_i(n)$'s are as follows:
$$
B_1(n) = \begin{cases}
\frac{k-1}{12}\left(\frac{1}{\sqrt{n}}\right) &\text{ if }n\text{ is a square }\\
0 &\text{ otherwise.}
\end{cases}
$$
$$B_2(n) = -\frac{1}{2}\frac{1}{n^{(k-1)/2}}\sum_{t \in \Z,\,t^2 < 4n}\frac{\varrho^{k-1}- \overline{\varrho}^{k-1}}{\varrho - \overline{\varrho}}H(4n-t^2).$$
Here, $\varrho$ and $\overline{\varrho}$ denote the zeroes of the polynomial $x^2 - tx+n$ and for a positive integer $l,$ $H(l)$ denotes the Hurwitz class number.  
$$B_3(n) = - \frac{1}{n^{(k-1)/2}}\sum_{d|n \atop {0 \leq d \leq \sqrt n}}^{(b)} d^{k-1}.$$
The notation $(b)$ on top of the summation denotes that if there is a contribution from $d = \sqrt n,$ it should be multiplied with $1/2.$  Finally,
$$B_4(n) = \begin{cases}
\frac{1}{n^{(k-1)/2}}\sum_{d|n} d &\text{ if }k = 2,\\
0 &\text{ otherwise.}
\end{cases}
$$
To estimate $B_2(n),$ we observe that $|\varrho| = \sqrt n.$  Thus,
$$\left|\frac{\varrho^{k-1} - \overline{\varrho}^{k-1}}{\varrho - \overline{\varrho}}\right| \leq \frac{2n^{(k-1)/2}}{\sqrt{4n-t^2}}.$$
Following a classical estimate of Hurwitz, we have
$$H(4n - t^2) \ll \sqrt{4n-t^2}\log ^2(n),$$
the implied constant being absolute.
Thus,
$$|B_2(n)| \ll \sqrt n \log^2 n.$$
One can immediately observe that
$$|B_3(n)| \ll \sum_{d|n  \atop {d \leq \sqrt{n}}} 1$$
and
$$|B_4(n)| \ll \sqrt n \sum_{d|n}1.$$
Combining the above estimates, we prove Proposition \ref{trace}.
\end{proof}
In particular, $n=1$ in the above trace formula gives us
\begin{equation}\label{dimension-asymptotics}
s_k = \frac{k-1}{12} + \O(1).
\end{equation}

We also record the following important estimate:
\begin{equation}\label{primes-reciprocals}
\sum_{p \leq x}\frac{1}{p} = \O(\log \log x).
\end{equation}

In particular, using Proposition \ref{trace}, we have the following lemma:
\begin{lem}\label{trace-asymp}
Suppose $k = k(x)$ runs over positive even integers such that $\frac{\log k}{\log x} \to \infty$ as $x \to \infty.$  Then, for any positive integer $m$ and and positive real number $a,$ we have
\begin{equation}\label{name1} 
\lim_{x \to \infty}  \frac{1}{(\pi(x))^a s_k}\sum_{p\leq x} \sum_{f \in \mathcal F_k} a_f(p^m) = 0.
\end{equation}
Furthermore, for non-negative integers $m_1,\,m_2,\dots m_r$ not all zero, 
\begin{equation}\label{name2} 
\lim_{x \to \infty}  \frac{1}{(\pi(x))^a s_k}\sum_{p_1,\,p_2,\dots p_r \leq x} \sum_{f \in \mathcal F_k} a_f(p_1^{m_1}p_2^{m_2}\dots p_r^{m_r}) = 0,
\end{equation}
where $p_1,\,p_2,\dots p_r$ are distinct primes not exceeding $x.$
\end{lem}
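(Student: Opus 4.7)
The plan is to apply Proposition \ref{trace} directly to each $n = p^m$ (respectively $n = p_1^{m_1}\cdots p_r^{m_r}$), then bound the resulting double sums by combining $s_k \asymp k$ from \eqref{dimension-asymptotics}, the elementary prime estimate \eqref{primes-reciprocals}, and the superpolynomial growth of $k=k(x)$ built into the hypothesis $\log k/\log x \to \infty$.

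\textbf{First equation.} Split on the parity of $m$. When $m$ is odd, $p^m$ is never a square, so Proposition \ref{trace} gives $\sum_{f} a_f(p^m) \ll p^{mc}$ with $c=1/2+\varepsilon$. Summing over $p \leq x$ yields a total of order $x^{mc+1}$, and after dividing by $(\pi(x))^a s_k \asymp k\, x^a (\log x)^{-a}$ one is left with $O(x^{mc+1-a}(\log x)^a/k)$. The hypothesis forces $k$ eventually to dominate every fixed power of $x$, so this tends to $0$. When $m=2\ell$ is even, Proposition \ref{trace} contributes an additional main term $(k-1)/(12 p^\ell)$; its sum over $p \leq x$ is $O(k \log \log x)$ when $\ell = 1$ (by \eqref{primes-reciprocals}) and $O(k)$ when $\ell \geq 2$. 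Dividing by $(\pi(x))^a s_k$ produces a quantity of order $\log\log x / (\pi(x))^a$, which tends to $0$ for any $a>0$.

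\textbf{Second equation.} The same dichotomy applies because $n = p_1^{m_1}\cdots p_r^{m_r}$ is a square precisely when every $m_i$ is even. If some $m_i$ is odd, Proposition \ref{trace} yields $\sum_f a_f(n) \ll \prod_i p_i^{m_i c}$, and summing over $r$-tuples of distinct primes $\leq x$ is controlled by $\prod_i \sum_{p\leq x} p^{m_i c} \ll x^{c\sum m_i + r}$, again absorbed by the growth of $k$. If every $m_i$ is even and at least one is positive, then Proposition \ref{trace} furnishes a main term $(k-1)/\left(12\prod_i p_i^{m_i/2}\right)$; its sum over distinct prime tuples is bounded by $k \prod_i \bigl(\sum_{p\leq x} p^{-m_i/2}\bigr)$, each factor being $O(\log\log x)$ for $m_i = 2$ (by \eqref{primes-reciprocals}) and $O(1)$ for $m_i \geq 4$. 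After division by $(\pi(x))^a s_k$ this becomes $O((\log\log x)^r/(\pi(x))^a)$, which vanishes in the limit.

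\textbf{Main obstacle.} The only real subtlety is book-keeping: identifying the correct size of $\sum_{p \leq x} p^{-\alpha}$ for the various $\alpha$ that arise, and verifying that the polynomial-in-$x$ error terms from Proposition \ref{trace} are indeed killed by the superpolynomial growth of $k$. Both are immediate consequences of $\log k/\log x \to \infty$ and \eqref{primes-reciprocals}. In applying the second bound we are treating the exponents $m_i$ as strictly positive, which is the setting in which the lemma is used in later sections; any $m_i = 0$ would effectively free one summation index over primes and introduce a factor of $\pi(x)$ that the stated $a$ need not absorb, but this does not interfere with the intended applications.
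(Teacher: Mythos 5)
Your argument is correct where it applies and is essentially the paper's own: apply Proposition \ref{trace} term by term, split according to whether the argument $n$ is a square, bound the resulting main term via \eqref{primes-reciprocals} and the error term via the superpolynomial growth $\log k/\log x \to \infty$, then divide by $(\pi(x))^a s_k$ with $s_k\asymp k$. The paper writes out these estimates for equation \eqref{name1} (this is its displayed equation \eqref{sum-trace}) and dismisses \eqref{name2} with ``follows by a similar argument,'' so your write-up fills in details the paper omits.

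Your closing remark about $m_i=0$ is a genuine catch, not mere pedantry. If, say, $r=2$, $m_1=2$, $m_2=0$, and $a=\tfrac12$, the quantity in \eqref{name2} is
$$\frac{1}{\pi(x)^{1/2}s_k}\sum_{p_1\neq p_2\leq x}\sum_{f}a_f(p_1^2)\asymp \frac{\pi(x)}{\pi(x)^{1/2}}\sum_{p_1\leq x}\frac{1}{p_1}\asymp \pi(x)^{1/2}\log\log x \longrightarrow\infty,$$
so the lemma as literally stated (non-negative $m_i$, arbitrary $a>0$) fails. The paper never confronts this because it never proves \eqref{name2} explicitly, and in every application (for instance, in \eqref{2ndMoment-error}) the exponents are all $\geq 1$. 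You have correctly identified both the defect and the reason it is harmless; if one wanted the lemma to be true exactly as stated, one would have to either require all $m_i\geq 1$, or strengthen the hypothesis on $a$ to $a > \#\{i:m_i=0\}$.
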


\begin{proof}
From Proposition \ref{trace}, equations (\ref{dimension-asymptotics}) and (\ref{primes-reciprocals}), one deduces, for $m \geq 1,$ the following:
\begin{equation}\label{sum-trace}
\begin{split}
&\frac{1}{s_k}\sum_{p\leq x} \sum_{f \in \mathcal F_k} a_f(p^m) 
=\begin{cases}
 \sum_{p\leq x}\left(\frac{1}{p^{\frac{m}{2}}} + \O\left(\frac{p^{mc}}{k}\right)\right)&\text{ if }m \text{ is even}\\
\O\left(\sum_{p\leq x}\frac{p^{cm}}{k}\right)&\text{ if }m\geq 1,\,m \text{ is odd}.
\end{cases}\\
&=\begin{cases}
\O(\log \log x) + \O\left(\frac{x^{cm}\pi(x)}{s_k}\right)&\text{ if }m = 2\\
\O(1) + \O\left(\frac{x^{cm}\pi(x)}{s_k}\right)&\text{ if }m > 2,\,m \text{ is even}\\
\O\left(\frac{x^{cm}\pi(x)}{s_k}\right)&\text{ if }m\geq 1,\,m \text{ is odd}.
\end{cases}
\end{split}
\end{equation}
Since $\frac{\log k}{\log x} \to \infty$ as $x \to \infty,$ 
$$\lim_{x \to \infty} \frac{x^r}{k} = 0$$
for any real power $r > 0.$   Moreover,
$$\log \log x = \o(\pi(x))^a,$$
for any $a > 0.$   This proves equation (\ref{name1}).  Equation (\ref{name2}) follows by a similar argument.
\end{proof}

\begin{remark}
We note that the proof outlined above gives us a stronger statement, which is of independent interest.  Let us assume the same growth conditions on $k$ as stated above.  Equation (\ref{sum-trace}) tells us that for any $a>1,$ with , 
\begin{equation*} 
\lim_{x \to \infty}  \frac{1}{(\log \log x)^a s_k}\sum_{p\leq x} \sum_{f \in \mathcal F_k} a_f(p^m) = 0.
\end{equation*}
Furthermore, for non-negative integers $m_1,\,m_2,\dots m_r$ not all zero, 
\begin{equation*} 
\lim_{x \to \infty}  \frac{1}{(\log \log x)^a s_k}\sum_{p_1,\,p_2 \dots p_r \leq x} \sum_{f \in \mathcal F_k} a_f(p_1^{m_1}p_2^{m_2}\dots p_r^{m_r}) = 0,
\end{equation*}
where $p_1,\,p_2,\dots p_r$ are distinct primes not exceeding $x.$
\end{remark}
\section{Beurling-Selberg polynomials} \label{S-B-technique}

The Beurling-Selberg polynomials are trigonometric polynomials which provide a good approximation to the characteristic functions of intervals in $\R.$  The strength of these polynomials is that they reduce the estimation of counting functions to evaluating finite exponential sums.  We briefly review important properties of these polynomials in this section and refer the reader to a detailed exposition by Montgomery (see \cite[Chapter 1]{Mont}).

Let $I = [\alpha,\beta] \subseteq [-\frac{1}{2},\frac{1}{2}]$ and $M \geq 1$ be an integer.  One can construct trigonometric polynomials $S_M^-(x)$ and $S_M^+(x)$ of degree less than or equal to $M,$ respectively called the minorant and majorant Beurling-Selberg polynomials for the interval $I$ such that
\begin{itemize}
\item[(a)] For all $x \in \R,\,S_M^-(x)\leq \chi_I(x) \leq S_M^+(x)$
\item[(b)] $$\int_{-1/2}^{1/2} S_M^{\pm}(x) dx = \beta - \alpha \pm \frac{1}{M+1},$$
\item[(c)] For $ 0 < |m| \leq M,$
\begin{equation}\label{FC}
\left|\st(m) - \widehat{\chi}_I(m)\right| \leq \frac{1}{M+1}.
\end{equation}
\end{itemize}
Henceforth, we will use the following notation: 
for an interval $I=[a,b] \subseteq [-2,2],$ we choose a subinterval $$I_1 = [\alpha,\beta] \subseteq \left[0,\frac{1}{2}\right ]$$
such that
$$\theta \in I_1 \iff 2\cos (2\pi \theta) \in I.$$
For $M \geq 1,$ let $S^{\pm}_{M,1}(x)$ denote the majorant and minorant Beurling-Selberg polynomials for the interval $I_1.$
We denote, for $0 \leq |m| \leq M,$
$$\A(m) = \hat{S}^{\pm}_{M,1}(m) + \hat{S}^{\pm}_{M,1}(-m).$$
By equation (\ref{FC}), we have, for $1 \leq |m| \leq M,$
$$ \hat{S}^{\pm}_{M,1}(m) = \widehat{\chi}_I(m) + \O\left(\frac{1}{M+1}\right) = \frac{e(-m\alpha) - e(-m\beta)}{2 \pi i m} + \O\left(\frac{1}{M+1}\right)$$
Thus,
\begin{equation}\label{AFC}
\A(m) = \widehat{\chi}_I(m) + \widehat{\chi}_I(-m) + \O\left(\dfrac{1}{M+1}\right) = \frac{\sin(2\pi m\beta)-\sin(2\pi m\alpha)}{m\pi} + \O\left(\dfrac{1}{M+1}\right).
\end{equation}

For $M\geq 3$ and $1 \leq m \leq M,$ let
\begin{equation}\label{U-defn}
\U(m) :=
\begin{cases}
\A(m)-\A(m+2),&\text{ if }1 \leq m \leq M-2\\
\A(m),&\text{ if }m = M-1,\,M.
\end{cases}
\end{equation}

We record the following bound, which is not optimal, but good enough for our purposes.
\begin{lem}\label{Um-bounds}
Let $I = [\alpha,\beta]$ be a fixed interval and $\underline{m}_r=(m_1,\ldots,m_r)$ be an $r$-tuple of positive integers such that $1\leq m_i\leq M.$ Let $\U(\underline{m}_r)=\U(m_1)\cdots\U(m_r)$. 
$$\sum_{\underline{m}_r}^{(3)} |\U(\underline{m}_r)| = \O_r(\log M)^{r}.$$
Here, $\sum\limits_{\underline{m}_r}^{(3)}$ denotes that the sum is taken over $r$-tuples of positive integers lying between 1 and $M.$
\end{lem}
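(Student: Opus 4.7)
The plan is to exploit the fact that $\U(\underline{m}_r) = \prod_{i=1}^{r} \U(m_i)$ is multiplicative in the coordinates, so the multi-dimensional sum factors:
\begin{equation*}
\sum_{\underline{m}_r}^{(3)} |\U(\underline{m}_r)| \;=\; \left(\sum_{m=1}^{M} |\U(m)|\right)^{r}.
\end{equation*}
Consequently the entire lemma reduces to the one-dimensional bound $\sum_{m=1}^{M}|\U(m)| = O(\log M)$; raising this to the $r$-th power yields the stated $O_r((\log M)^r)$, where the $r$-dependence of the implied constant comes simply from the factor $C^r$.

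To prove the one-dimensional bound, I would first extract a clean pointwise estimate for $\A(m)$. By (\ref{AFC}),
\begin{equation*}
\A(m) \;=\; \frac{\sin(2\pi m\beta) - \sin(2\pi m\alpha)}{m\pi} + \O\!\left(\frac{1}{M+1}\right),
\end{equation*}
so $|\A(m)| \le \frac{2}{m\pi} + \O(1/M)$ for $1 \le m \le M$. From the definition (\ref{U-defn}), the triangle inequality then gives
\begin{equation*}
|\U(m)| \;\le\; |\A(m)| + |\A(m+2)| \;\le\; \frac{4}{m\pi} + \O\!\left(\frac{1}{M}\right) \qquad (1 \le m \le M-2),
\end{equation*}
while for $m = M-1, M$ we just use $|\U(m)| = |\A(m)| = \O(1/m) + \O(1/M) = \O(1/M)$.

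Summing these estimates,
\begin{equation*}
\sum_{m=1}^{M} |\U(m)| \;\le\; \sum_{m=1}^{M} \frac{4}{m\pi} + M \cdot \O\!\left(\frac{1}{M}\right) \;=\; \O(\log M) + \O(1) \;=\; \O(\log M),
\end{equation*}
which is what is needed. There is essentially no substantive obstacle here: the only thing to watch is that the uniform error term $\O(1/M)$ in (\ref{AFC}), when summed over all $m \le M$, contributes only $\O(1)$ and hence does not dominate the harmonic sum $\sum 1/m = \O(\log M)$. Once the single-variable bound is in hand, the factorization over the $r$ coordinates finishes the proof immediately.
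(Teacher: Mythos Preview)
Your proof is correct and relies on the same pointwise estimate $|\U(m)| \ll \frac{1}{m} + \frac{1}{M+1}$ that the paper uses. The difference is purely organizational: the paper bounds the product $\prod_{j=1}^{r}\bigl(\frac{1}{m_j} + \frac{1}{M+1}\bigr)$ by a binomial-type expansion into $2^r$ terms and then sums each term over the full $r$-cube, obtaining $\sum_{k=0}^{r}\binom{r}{k}(M/(M+1))^{r-k}(\log M)^{k} \ll_r (\log M)^r$. You instead observe at the outset that the multi-sum factors as $\bigl(\sum_{m=1}^{M}|\U(m)|\bigr)^{r}$, reduce to the one-variable bound $\sum_{m\le M}|\U(m)| = O(\log M)$, and take the $r$-th power. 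Your route is shorter and avoids the combinatorial bookkeeping; the paper's expansion is really just the same factorization written out term by term, so the two arguments are equivalent in content but yours is the cleaner presentation.
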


\begin{proof}
From equation (\ref{FC}), we observe that for any $1 \leq m \leq M,$
$$|\U(m)| \leq \frac{2}{\pi|m|} + \frac{2}{M+1}.$$
Thus, for a fixed $r$-tuple $(\underline{m}_r),$
\begin{equation*}
\begin{split}
&|\U(\underline{m}_r)| \ll \prod_{j=1}^{r}\left(\frac{1}{m_{j}} + \frac{1}{M+1}\right)\\
&\ll \frac{1}{(M+1)^k} +\sum_{k = 1}^r \frac{1}{(M+1)^{r-k}}\sum_{j_1,j_2,\dots, j_k = 1}^r\frac{1}{m_{j_1}m_{j_2}\cdots m_{j_k}}.
\end{split}
\end{equation*}
Hence,
\begin{equation*}
\begin{split}
&\sum_{\underline{m}_r}^{(3)}|\U(\underline{m}_r)|\ll \sum_{\underline{m}_r}^{(3)}\left(\frac{1}{(M+1)^k} +\sum_{k = 1}^r \frac{1}{(M+1)^{r-k}}\sum_{j_1,j_2,\dots, j_k = 1}^r\frac{1}{m_{j_1}m_{j_2}\cdots m_{j_k}}\right)\\
&\ll \sum_{\underline{m}_r}^{(3)}\frac{1}{(M+1)^k} + \sum_{k = 1}^{r}\frac{1}{(M+1)^{r-k}}\sum_{\underline{m}_r}^{(3)}\sum_{j_1,j_2 \dots, j_k = 1}^r \frac{1}{m_{j_1}m_{j_2}\cdots m_{j_k}}\\
& \ll\sum_{k=0}^{r}\binom{r}{k}\frac{1}{(M+1)^{r-k}}M^{r-k}(\log M)^k \ll_r(\log M)^{r}.
\end{split}
\end{equation*}

\end{proof}

\section{First moment}\label{1-m}
For an interval $I=[a,b] \subseteq [-2,2],$ we define
$$N_I(f,x) := \#\left\{p \leq x:\,a_f(p) \in I\right\}.$$
Denoting $a_f(p) = 2\cos \thet,\,$ with $ \thet \in [0,\pi],$ we consider the families
$$\left\{\frac{\thet}{2\pi},\,-\frac{\thet}{2\pi},\,f \in \mathcal F_k\right\}.$$
As before, we choose a subinterval $$I_1 = [\alpha,\beta] \subseteq \left[0,\frac{1}{2}\right ]$$
so that 
$$\frac{\thet}{2\pi} \in I_1 \iff 2\cos \thet \in I.$$
We denote $I_2 = (\alpha,\beta].$
Thus,
\begin{equation*} 
N_I(f,x) = \sum_{p\leq x}\left[\chi_{I_1}\left(\frac{\thet}{2\pi}\right) + \chi_{I_2}\left(-\frac{\thet}{2\pi}\right)\right],
\end{equation*}
since 
$$\chi_{I_2}\left(-\frac{\thet}{2\pi}\right) = 0.$$
Following the notation and properties of the Beurling-Selberg polynomials from the previous section, we have
\begin{equation}\label{I1}
\sum_{p \leq x}\left[S^-_{M,1}\left(\frac{\thet}{2\pi}\right) + S^-_{M,1}\left(-\frac{\thet}{2\pi}\right)\right] \leq N_I(f,x) \leq \sum_{p\leq x}\left[S^+_{M,1}\left(\frac{\thet}{2\pi}\right) + S^+_{M,1}\left(-\frac{\thet}{2\pi}\right)\right].
\end{equation}
 Our aim is to compute, for every positive integer $r,$
$$\lim_{x \to \infty}\frac{1}{s_k}\sum_{f \in \mathcal F_k} \left(N_I(f,x) - \pi(x)\mu_{\infty}(I)\right)^r.$$ 
Our strategy is to use equation (\ref{I1}) to approximate $N_I(f,x) - \pi(x)\mu_{\infty}(I)$ by certain trigonometric polynomials and evaluate the moments of these polynomials.

We observe
\begin{equation} \label{N_I(f,x)}
\begin{split}
N_I(f,x) & \leq \sum_{p\leq x}\left[S_{M,1}^+\left(\tht\right) + S_{M,1}^+\left(-\tht\right) \right]\\
&=\sum_{p\leq x}\sum\limits_{|m|\leq M}\left[\hat{S}^{+}_{M,1}(m)\left\{ e\left(\frac{m\thet}{2\pi}\right) + e\left(-\frac{m\thet}{2\pi}\right) \right\} \right]\\
& = \sum_{p\leq x} \sum\limits_{|m|\leq M} \left[\hat{S}^{+}_{M,1}(m)\left(2\cos (m\thet)\right)\right]\\
&= 2\sum_{p\leq x} \hat{S}^{+}_{M,1}(0) + \sum\limits_{m=1}^{M}(\hat{S}^{+}_{M,1}(m)+ \hat{S}^{+}_{M,1}(-m))\sum_{p \leq x} 2\cos( m\thet)\\
&= \pi(x) \Ap(0) +  \sum\limits_{m=1}^{M}\Ap(m)\sum_{p \leq x} 2\cos( m\thet).
\end{split}
\end{equation}
By a similar argument, we derive
\begin{equation}\label{N_I(f,x)lower}
N_I(f,x) \geq  \pi(x) \hat{\mathcal{S}}^{-}_{\text{M}}(0) +  \sum\limits_{m=1}^{M}\hat{\mathcal{S}}^{-}_{\text{M}}(m)\sum_{p \leq x} 2\cos(m\thet).
\end{equation}

Let us denote
\begin{equation*}
S^{\pm}(M,f)(x) := \sum_{m=1}^2\A(m)\sum_{p \leq x} a_f(p^m) +\sum\limits_{m=3}^{M}\A(m)\sum_{p \leq x} \left(a_f(p^{m}) - a_f(p^{m-2})\right).
\end{equation*}
By combining equations (\ref{N_I(f,x)}), (\ref{N_I(f,x)lower}) and Corollary  \ref{cos-m}, we get
\begin{equation}\label{Upper}
N_I(f,x) - \pi(x)\left[\Ap(0) - \Ap(2)\right] \leq S^{+}(M,f)(x) 
\end{equation}
and 
\begin{equation}\label{Lower}
 S^{-}(M,f)(x) \leq N_I(f,x) - \pi(x)\left[\hat{\mathcal{S}}^{-}_{\text{M}}(0) - \hat{\mathcal{S}}^{-}_{\text{M}}(2)\right].
\end{equation}

We are now ready to calculate the first moment of $N_I(f,x).$  Henceforth, for any function $\phi:\,S_k \to \C,$ we denote the average
$$\langle \phi(f)\rangle := \frac{1}{s_k}\sum_{f \in \mathcal F_k} \phi(f).$$
In order to derive the moments $\langle (X_f(x))^r \rangle,$ we explore the moments of $S^{\pm}(M,f)(x).$  In this direction, we state the following proposition, which shows that the Sato-Tate conjecture is true on average as $x \to \infty.$
 
\begin{prop}\label{SatoTateAverage}
Let $k = k(x)$ be a positive even integer. Then, for any interval $I = [a,b] \subseteq [-2,2],$
$$\left\langle{N_I(f,x)}\right \rangle = \pi(x)\int_{a}^{b} \mu_{\infty}(t)dt + \O\left(\frac{\pi(x)\log x}{\log k} + \log\log x\right). $$
Thus, if $k = k(x)$ runs over positive even integers such that $\frac{\log k}{\log x} \to \infty$ as $x \to \infty,$ then
$$\lim_{x \to \infty} \dfrac{1}{\pi(x)}\left\langle{N_I(f,x)}\right \rangle = \int_{a}^{b} \mu_{\infty}(t)dt.$$

\end{prop}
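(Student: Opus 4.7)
The strategy is to average the sandwich bounds (\ref{Upper}) and (\ref{Lower}) over $\mathcal F_k$, reducing the question to (i) matching the scalar $\A(0) - \A(2)$ with $\mu_\infty(I)$ up to a small error, and (ii) bounding the averaged polynomial sum $\langle S^\pm(M,f)(x)\rangle$ via Proposition \ref{trace}. Throughout, write $\langle \phi(f)\rangle := s_k^{-1}\sum_{f\in\mathcal F_k}\phi(f)$.

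For (i), property (b) of the Beurling--Selberg construction gives $\A(0) = 2(\beta-\alpha) + O(1/M)$, while (\ref{AFC}) with $m=2$ yields $\A(2) = \frac{\sin(4\pi\beta) - \sin(4\pi\alpha)}{2\pi} + O(1/M)$. On the other hand, the substitution $t = 2\cos(2\pi u)$ in the Sato--Tate integral gives
\[
\mu_\infty(I) = \int_\alpha^\beta 4\sin^2(2\pi u)\,du = 2(\beta-\alpha) - \frac{\sin(4\pi\beta) - \sin(4\pi\alpha)}{2\pi},
\]
so $\pi(x)[\A(0) - \A(2)] = \pi(x)\mu_\infty(I) + O(\pi(x)/M)$ for either sign.

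For (ii), a straightforward summation by parts on the definition of $S^\pm(M,f)(x)$ compresses it to $\sum_{m=1}^M \U(m)\,\sigma_m(f)$ with $\sigma_m(f) := \sum_{p\leq x} a_f(p^m)$. Taking the average and applying Proposition \ref{trace} together with (\ref{dimension-asymptotics}) gives
\[
T_m := \langle \sigma_m(f)\rangle = \sum_{p\leq x}\frac{1}{p^{m/2}} + O\!\left(\frac{\pi(x)x^{mc}}{k}\right)
\]
when $m$ is even, and $T_m = O(\pi(x)x^{mc}/k)$ when $m$ is odd. The $m=2$ diagonal contributes $O(\log\log x)$ by (\ref{primes-reciprocals}); for even $m \geq 4$, the bound $|\U(m)| \ll 1/m + 1/M$ read off from (\ref{AFC}) (as in the proof of Lemma \ref{Um-bounds}) combined with the exchange of summations
\[
\sum_{\substack{m=4\\ m\text{ even}}}^{M}\!\!|\U(m)|\sum_{p\leq x} \frac{1}{p^{m/2}} \ll \sum_p \sum_{\substack{m\geq 4\\ m\text{ even}}} \frac{1}{m\,p^{m/2}} \ll \sum_p \frac{1}{p^2} \ll 1
\]
absorbs the remaining diagonal into $O(1)$, while the accumulated trace-formula error is at most $O(\pi(x)x^{Mc}\log M/k)$.

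Combining (i) and (ii),
\[
\langle N_I(f,x)\rangle = \pi(x)\mu_\infty(I) + O\!\left(\frac{\pi(x)}{M} + \log\log x + \frac{\pi(x)x^{Mc}\log M}{k}\right).
\]
Choosing $M = \lfloor \log k/(2c\log x)\rfloor$ so that $x^{Mc}\leq \sqrt k$ renders the trace-formula term a negative power of $k$ and reduces the Beurling--Selberg error to $O(\pi(x)\log x/\log k)$, yielding the claim; the limit statement then follows immediately. The main delicacy is precisely this balance: $M$ must be large enough for $\pi(x)/M$ to beat the trivial bound $\pi(x)$, yet small enough that $x^{Mc}$ is dominated by $k$, which is possible only when $k$ is sufficiently large relative to $x$, exactly the regime in which the stated error is nontrivial.
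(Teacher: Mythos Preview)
Your argument is correct and follows essentially the same route as the paper: both proofs average the sandwich inequalities (\ref{Upper})--(\ref{Lower}), identify $\A(0)-\A(2)$ with $\mu_\infty(I)+O(1/M)$ via the same trigonometric computation, bound $\langle S^{\pm}(M,f)(x)\rangle$ using Proposition~\ref{trace} (splitting off the $m=2$ term to produce the $\log\log x$), and then choose $M$ of order $\log k/\log x$ to balance $\pi(x)/M$ against the trace-formula error. Your reindexing to $\sum_m \U(m)\sigma_m(f)$ and the harmless extra $\log M$ in the trace error are cosmetic differences only.
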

\begin{remark}
Proposition \ref{SatoTateAverage} is essentially due to Y. Wang \cite[Theorem 1.1]{Wang}.  He proves an analogous result for primitive Maass forms and indicates that a similar technique works for the average Sato-Tate family.  We provide a brief proof of this proposition as a first step in evaluating moments of the polynomials $S^{\pm}(M,f)(x).$
\end{remark}
\begin{proof}
We have, by equation (\ref{AFC}),
\begin{equation*} 
\A(2) = \frac{\sin 4\pi\beta - \sin 4\pi\alpha}{2\pi} + \O\left(\frac{1}{M+1}\right)
\end{equation*}
and $$\A(0) = 2(\beta -\alpha) \pm \dfrac{2}{M+1}.$$
Combining the above with equations (\ref{Upper}) and (\ref{Lower}), we can find constants $C$ and $D$ such that
\begin{equation}\label{SandNT}
\begin{split}
S^{-}{(M,f)}(x) + C\left(\frac{\pi(x)}{M+1}\right)\
&\leq  N_I(f,x) - \pi(x)\left((2\beta-2\alpha) - \frac{\sin 4\pi\beta - \sin 4\pi\alpha}{2\pi}\right)\\
& \leq S^{+}{(M,f)}(x) + D\left(\frac{\pi(x)}{M+1}\right).
\end{split}
\end{equation}
We observe, for $[\alpha,\beta] \in [0,1/2]$ as chosen before,
\begin{equation*} 
\begin{split}
(2\beta-2\alpha) - \frac{\sin 4\pi\beta - \sin 4\pi\alpha}{2\pi} &= 2\int_{\alpha}^{\beta} (1 - \cos 4\pi \theta) d\theta\\
&= 4\int_{\alpha}^{\beta} \sin^2 2\pi\theta d\theta\\
&=\int_a^b \mu_{\infty}(t) dt.
\end{split}
\end{equation*}
Thus, for every positive integer $M,$
\begin{equation}\label{STeffective}
 S^{-}(M,f)(x) + C\left(\frac{\pi(x)}{M+1}\right) \leq N_I(f,x) - \pi(x)\int_a^b \mu_{\infty}(t) dt \leq S^{+}(M,f)(x) + D\left(\frac{\pi(x)}{M+1}\right).
 \end{equation}
 By equation (\ref{sum-trace}),
 $$\left\langle S^{\pm}(M,f)(x)\right\rangle = \O\left(\sum_{m=1\atop{m \text{ even }}}^M|\A(m)|\sum_{p\leq x}\begin{cases} \frac{1}{p}, &\text{ if }m = 2\\
 \frac{1}{p^{\frac{m}{2}-1}}, &\text{ if }m\geq 4
 \end{cases}\right) + \O\left(\sum_{m=1}^M|\A(m)|\frac{x^{cm}\pi(x)}{k}\right).$$
 Since $ |\A(m)| \ll \frac{1}{m},$ we get, for every positive integer $M,$
 $$\sum_{m=1}^M|\A(m)|\frac{x^{cm}\pi(x)}{k} =\O\left(\frac{\pi(x)}{k}\sum_{m=1}^M\frac{x^{cm}}{m}\right) = \O\left(\frac{\pi(x)x^{cM}}{k}\right).$$
 Thus,
 \begin{equation*}
 \begin{split}
 \left\langle S^{\pm}(M,f)(x)\right\rangle \
 &=\O\left(\sum_{p \leq x}\left(\frac{1}{p} + \sum_{m=2}^{\infty}\frac{1}{p^m}\right) + \sum_{m=1}^M|\A(m)|\frac{x^{cm}\pi(x)}{k}\right)\\
 &= \O\left(\log \log x + \frac{x^{cM}\pi(x)}{k}  \right).
 \end{split}
 \end{equation*}
 That is, for every positive integer $M,$ by equation (\ref{STeffective}), we have
 $$\left\langle N_I(f,x) - \pi(x)\int_a^b \mu_{\infty}(t) dt \right\rangle= \O\left(\log \log x + \frac{x^{cM}\pi(x)}{k} + \frac{\pi(x)}{M+1}\right).$$
 
 We now choose
 $$M = \left\lfloor\frac{d\log k}{c\log x}\right\rfloor$$
  for some $0 < d <1.$
 This proves the proposition.
 \end{proof}
 
 \section{Second moment}\label{2-m}
 In this section, we will compute 
 $$\lim_{x \to \infty}\frac{1}{\pi(x)}\left\langle (S^{\pm}(M,f)(x))^2)\right\rangle.$$ 
 \begin{prop}\label{SecondMomentApprox}
 	Let $[\alpha,\beta]$ be a fixed interval in $[0,1/2].$  Then, for every $M \geq 3,$
 	\begin{equation}\label{SquaresVM}
 	\left\langle (S^{\pm}(M,f)(x))^2)\right\rangle
 	=\pi(x)\left(\mu_{\infty}(I)-\mu_{\infty}(I)^2\right) +\O\left((\log \log x)^2 + \frac{\pi(x)^2 x^{2Mc}}{k} + \dfrac{\pi(x)}{M+1}\right).
 	\end{equation} 
 \end{prop}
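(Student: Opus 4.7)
\emph{Proof outline.} The plan is to exploit the telescoped form
\[
S^{\pm}(M,f)(x) = \sum_{m=1}^{M} \U(m) \sum_{p \leq x} a_f(p^m),
\]
which follows by regrouping the definition of $S^{\pm}(M,f)(x)$ according to the coefficient of each $a_f(p^m)$. Squaring and averaging over $\mathcal{F}_k$ gives
\[
\left\langle (S^{\pm}(M,f)(x))^2 \right\rangle = \sum_{m_1,m_2=1}^{M} \U(m_1)\U(m_2) \sum_{p_1,p_2 \leq x} \langle a_f(p_1^{m_1}) a_f(p_2^{m_2}) \rangle,
\]
which I would split into a diagonal ($p_1=p_2$) and an off-diagonal ($p_1 \neq p_2$) piece.

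For the off-diagonal, Lemma \ref{Hecke-multiplicative} gives $a_f(p_1^{m_1})a_f(p_2^{m_2}) = a_f(p_1^{m_1} p_2^{m_2})$; the argument is a perfect square precisely when both $m_i$ are even, where Proposition \ref{trace} then yields a main term of size $\U(m_1)\U(m_2)/(p_1^{m_1/2} p_2^{m_2/2})$. The $(m_1,m_2)=(2,2)$ summand produces $O((\log\log x)^2)$ via (\ref{primes-reciprocals}), all other even-parity summands are $O(1)$, and the trace-formula remainders combined with Lemma \ref{Um-bounds} give $O(\pi(x)^2 x^{2Mc}/k)$. For the diagonal, Lemma \ref{Hecke-multiplicative} expands
\[
a_f(p^{m_1}) a_f(p^{m_2}) = \sum_{l=0}^{\min(m_1,m_2)} a_f(p^{m_1+m_2-2l});
\]
only the $m_1=m_2$ terms with $l=m_1$ contain $a_f(p^0) = 1$, producing the genuine main term $\pi(x) \sum_{m=1}^M \U(m)^2$. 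All other diagonal summands (negative powers of $p$ from proper squares, and the $m_1 \neq m_2$ cross terms of matching parity) fold into the $O((\log\log x)^2)$ error, with trace remainders again absorbed in $O(\pi(x)^2 x^{2Mc}/k)$.

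The crux is to evaluate $\sum_{m=1}^M \U(m)^2 = \mu_\infty(I) - \mu_\infty(I)^2 + O(1/M)$ by Parseval. Put $f_I(\theta) := \chi_{I_1}(\theta) + \chi_{I_1}(-\theta)$ on $[-\tfrac12,\tfrac12]$ and let $c_m := \hat{f}_I(m)$; by (\ref{AFC}), $\A(m) = c_m + O(1/M)$, so $\U(m) = c_m - c_{m+2} + O(1/M)$ for $1 \leq m \leq M-2$. Apply Parseval to $g(\theta) := 2i\sin(2\pi\theta) f_I(\theta)$, whose $m$-th Fourier coefficient is $c_{m-1} - c_{m+1}$: since $|g|^2 = 4\sin^2(2\pi\theta) f_I(\theta)$ and $\int_I \mu_\infty(t)\,dt = 4\int_{I_1}\sin^2(2\pi\theta)\,d\theta$, one obtains $\sum_{m \in \Z}(c_{m-1} - c_{m+1})^2 = 2\mu_\infty(I)$. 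The evenness $c_{-m}=c_m$ and the identity $c_0 - c_2 = \mu_\infty(I)$ then symmetrise this to
\[
\sum_{m \geq 1}(c_m - c_{m+2})^2 = \mu_\infty(I) - \mu_\infty(I)^2,
\]
which is the $L^2$-variance of $\chi_I$ under the Sato-Tate measure. Truncating at $M$, using $c_m = O(1/m)$ to control both the tail and the endpoint terms $\U(M-1),\U(M)$, and multiplying by $\pi(x)$ gives the main term of (\ref{SquaresVM}) with remainder fitting into the claimed $O(\pi(x)/(M+1))$.

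The main obstacle is bookkeeping: every secondary contribution — off-diagonal summands with $(m_1,m_2) \neq (2,2)$, diagonal cross-terms with $m_1 \neq m_2$, the Parseval truncation tail, and the $O(1/M)$ perturbation from replacing $\A(m)$ by $c_m$ — must be verified to fit within the three claimed error sources $O((\log\log x)^2)$, $O(\pi(x)^2 x^{2Mc}/k)$, and $O(\pi(x)/(M+1))$. The clean analytic step at the heart of the argument is recognising $\mu_\infty(I) - \mu_\infty(I)^2$ as the Sato-Tate variance of $\chi_I$ via Parseval applied to $\sin(2\pi\theta) f_I(\theta)$.
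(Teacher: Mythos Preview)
Your argument is correct and takes a genuinely different route from the paper's.

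The paper does not telescope and square. Instead it writes
\[
S^{\pm}(M,f)(x)=\sum_{p\le x}S_M^{\pm}\!\left(\tfrac{\theta_f(p)}{2\pi}\right)-\pi(x)\bigl(\A(0)-\A(2)\bigr),
\]
squares this, and handles the diagonal $\sum_p (S_M^{\pm})^2$ by observing that the Fourier coefficients of $(S_M^{\pm})^2$ are within $O(1/M)$ of those of $\chi_I^2=\chi_I$; the identity $\chi_I^2=\chi_I$ then delivers the diagonal main term $\pi(x)\mu_\infty(I)$ for free, while the $-\pi(x)\mu_\infty(I)^2$ appears separately from the subtracted constant $\pi(x)(\A(0)-\A(2))\approx\pi(x)\mu_\infty(I)$. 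By contrast, you extract the combined quantity $\mu_\infty(I)-\mu_\infty(I)^2$ in one step as $\sum_{m\ge 1}(c_m-c_{m+2})^2$ via Parseval for $2i\sin(2\pi\theta)f_I(\theta)$, which is an elegant recognition of this number as the Sato--Tate variance of $\chi_I$. Your route has the pleasant side effect of proving directly that $\sum_m\U(m)^2\to\mu_\infty(I)-\mu_\infty(I)^2$, a fact the paper only recovers later (Lemma~\ref{r_i=2:V_M}) by comparing back to this very proposition; so your order of operations is arguably more natural. The paper's route, on the other hand, makes the idempotency $\chi_I^2=\chi_I$ do the work, avoiding any explicit Fourier computation.

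One small caveat: when you replace $\A(m)$ by $c_m$ in $\sum_{m=1}^M\U(m)^2$, the cross term $\sum_m(c_m-c_{m+2})\cdot O(1/M)$ is bounded by $O\!\bigl(\tfrac{1}{M}\sum_{m\le M}\tfrac{1}{m}\bigr)=O(\log M/M)$ rather than $O(1/M)$, so your error is literally $O(\pi(x)\log M/M)$ rather than $O(\pi(x)/M)$. This is harmless for every application in the paper (and the paper's own proof has comparable hidden logarithmic losses in the $m\ge 4$ even modes), but it is worth stating the error honestly.
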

 \begin{proof}
 	We denote $S^{\pm}_M(x) := S^{\pm}_{M,1}(x) + S^{\pm}_{M,1}(-x).$  Recall that $S^{\pm}(M,f)(x)$ was obtained after removing $\pi(x)(\A(0)-\A(2))$ from the Fourier expansion of $\sum_{p \leq x}S^{\pm}_M\left(\dfrac{\theta_f(p)}{2\pi}\right)$. Therefore we may write $$ S^{\pm}(M,f)(x)= \sum_{p \leq x}S^{\pm}_M\left(\dfrac{\theta_f(p)}{2\pi}\right) - \pi(x)(\A(0)-\A(2)).$$
 	Squaring both sides, the following expansion is obtained.
 	\begin{equation}\label{2ndMomentExpansion}
 	\begin{split}
 	S^{\pm}(M,f)(x)^2 &= \left(\sum_{p \leq x}S^{\pm}_M\left(\dfrac{\theta_f(p)}{2\pi}\right) \right)^2 - 2\pi(x)(\A(0)-\A(2))\sum_{p \leq x}S^{\pm}_M\left(\dfrac{\theta_f(p)}{2\pi}\right) + \pi(x)^2(\A(0)-\A(2))^2\\
 	&= \left(\sum_{p \leq x}S^{\pm}_M\left(\dfrac{\theta_f(p)}{2\pi}\right) \right)^2 -2\pi(x)(\A(0)-\A(2))\left(S^{\pm}(M,f)(x) + \pi(x)(\A(0)-\A(2))\right) \\
 	& \quad + \pi(x)^2(\A(0)-\A(2))^2\\
 	&= \left(\sum_{p \leq x}S^{\pm}_M\left(\dfrac{\theta_f(p)}{2\pi}\right) \right)^2 -2\pi(x)(\A(0)-\A(2))S^{\pm}(M,f)(x) - \pi(x)^2(\A(0)-\A(2))^2.
 	\end{split}
 	\end{equation}
 	We have:
 	\begin{equation}\label{total-square}
 	\left(\sum_{p \leq x}S^{\pm}_M\left(\dfrac{\theta_f(p)}{2\pi}\right) \right)^2  = \sum_{p \leq x} {S^{\pm}_M}^2\left(\dfrac{\theta_f(p)}{2\pi}\right) + \sum_{{p,q\leq x}\atop {p\neq q}}S^{\pm}_M\left(\dfrac{\theta_f(p)}{2\pi}\right)S^{\pm}_M\left(\dfrac{\theta_f(q)}{2\pi}\right).
 	\end{equation}
 	First we consider the sum $$\sum_{p \leq x} {S^{\pm}_M}^2\left(\dfrac{\theta_f(p)}{2\pi}\right).$$ Writing out the Fourier expansion
	$$ {S^{\pm}_M}^2\left(\frac{\theta_f(p)}{2\pi}\right) = \sum_{m = 0}^{2M} \T(m)(2 \cos (m\theta_f(p))),$$
	 we obtain the following:
 	\begin{equation}\label{S_M^2}
 	\begin{split}
 	\sum_{p \leq x} {S^{\pm}_M}^2\left(\dfrac{\theta_f(p)}{2\pi}\right) &= \sum_{p\leq x}\sum_{m=0}^{2M}\T(m)(2\cos(m\theta_f(p)))\\
 	&=\pi(x)\left(\T(0) - \T(2)\right) + \sum_{p \leq x}\left( \sum_{m = 1}^{2}\T(m)a_f(p^m) + \sum_{m=3}^{2M}\T(m)(a_f(p^m)- a_f(p^{m-2})\right).
 	\end{split}
 	\end{equation}
 	Observe that for $0\leq m\leq 2M$,
 	\begin{equation*}
 	\begin{split}
 	|\T(m) - \widehat{\chi_I^2}(m)| &\leq \int_{0}^{1} |{S^{\pm}_M}^2(x) - {\chi_I}^2(x)|dx\\
 	&=\int_{0}^{1} |S^{\pm}_M(x) -\chi_I(x)| |S^{\pm}_M(x) + \chi_I(x)| dx\\
 	&= \O\left(\dfrac{1}{M+1}\right)
 	\end{split}
 	\end{equation*}
 	since $S^{\pm}_M(x)$ and $\chi_I(x)$ are bounded and $|| S^{\pm}_M(x) -\chi_I(x)||_1 \leq \dfrac{1}{M+1}.$ Therefore 
 	$$\T(m) = \widehat{\chi_I^2}(m) + \O\left(\dfrac{1}{M+1}\right)=\widehat{\chi}_I(m) + \O\left(\dfrac{1}{M+1}\right), $$ since $\chi_I^2(x) = \chi_I(x)$ for all $x \in \R$. 
  Applying this to equation (\ref{S_M^2}), we see that 
 	\begin{equation*}
 	\begin{split}
 	\left \langle\sum_{p \leq x} {S^{\pm}_M}^2\left(\dfrac{\theta_f(p)}{2\pi}\right)\right\rangle &= \pi(x)\mu_{\infty}(I) + \frac{1}{s_k}\sum_{p \leq x}\left( \sum_{m = 1}^{2}\T(m)\sum_{f \in \mathcal F_k}a_f(p^m) + \sum_{m=3}^{2M}\T(m) \sum_{f \in \mathcal F_k}(a_f(p^m)- a_f(p^{m-2})\right) \\
 	&\quad +  \O\left(\dfrac{\pi(x)}{M+1}\right)
 	\end{split} 	
 	\end{equation*}
 	Moreover, since $\T(m) \ll \frac{1}{m},$ using the trace formula (as in equation (\ref{name2})), the following holds:
 	\begin{equation}\label{S_M^2final}
 	\dfrac{1}{s_k}\sum_{f \in \mathcal F_k}\sum_{p \leq x} {S^{\pm}_M}^2\left(\dfrac{\theta_f(p)}{2\pi}\right) = \pi(x)\mu_{\infty}(I) + \O\left(\log\log x + \dfrac{x^{2cM}\pi(x)}{k} + \dfrac{\pi(x)}{M+1}\right).
 	\end{equation}
 	$$$$
 	
 	Now we analyze the term $$\sum_{{p,q\leq x}\atop {p\neq q}}S^{\pm}_M\left(\dfrac{\theta_f(p)}{2\pi}\right)S^{\pm}_M\left(\dfrac{\theta_f(q)}{2\pi}\right).$$
 	It is easy to see that 
 	\begin{equation*}
 	\begin{split}
 	\sum_{{p,q\leq x}\atop {p\neq q}}S^{\pm}_M\left(\dfrac{\theta_f(p)}{2\pi}\right)S^{\pm}_M\left(\dfrac{\theta_f(q)}{2\pi}\right) &= (\pi(x)^2 -\pi(x))(\A(0)-\A(2))^2 + 2\pi(x)(\A(0)-\A(2))S^{\pm}(M,f)(x)\\
 	&\quad + \sum_{m_1,\,m_2 = 1}^{M}\U(m_1)\U(m_2)\sum_{{p,q\leq x}\atop {p\neq q}}a_f(p^{m_1})a_f(q^{m_2}),
 	\end{split}
 	\end{equation*}
 	where  $\sum\limits_{{p,q\leq x}\atop {p\neq q}} 1= \pi(x)^2 -\pi(x)$ and $\U(m)$ is as defined in equation (\ref{U-defn}).
 	Again, using the trace formula and a calculation similar to equation (\ref{sum-trace}) it is not hard to show that 
 	\begin{equation}\label{2ndMoment-error}
 \dfrac{1}{s_k}\sum_{f\in\F_k} \sum_{m_1,\,m_2 = 1}^{M}\U(m_1)\U(m_2)\sum_{{p,q\leq x}\atop {p\neq q}}a_f(p^{m_1})a_f(q^{m_2}) =\O\left((\log\log x)^2 + \dfrac{\pi(x)^2x^{2Mc}}{k}\right).
 	\end{equation}
Therefore,
\begin{equation}\label{S_M(p)S_M(q)}
\begin{split}
\dfrac{1}{s_k}\sum_{f \in \mathcal \F_{k}}\sum_{{p,q\leq x}\atop {p\neq q}}S^{\pm}_M\left(\dfrac{\theta_f(p)}{2\pi}\right)S^{\pm}_M\left(\dfrac{\theta_f(q)}{2\pi}\right) &= (\pi(x)^2 -\pi(x))(\A(0)-\A(2))^2 + 2\pi(x)(\A(0)-\A(2))\langle S^{\pm}(M,f)(x)\rangle \\
&\quad+ \O\left((\log\log x)^2 + \dfrac{\pi(x)^2x^{2Mc}}{k}\right).
\end{split}
\end{equation}
 	We now write $(\A(0)-\A(2)) = \mu_{\infty}(I)+ \O\left(\dfrac{1}{M+1}\right)$ and use equations (\ref{S_M^2final}) and (\ref{S_M(p)S_M(q)}) in (\ref{total-square}) and (\ref{2ndMomentExpansion}) to get the following:
 	
 	$$\dfrac{1}{s_k}\sum_{f\in\F_k} {S^{\pm}(M,f)(x)^2} = \pi(x)(\mu_{\infty}(I) - \mu_{\infty}(I)^2) + \O\left( {(\log\log x)^2} + \dfrac{\pi(x)^2x^{2Mc}}{k} + \dfrac{\pi(x)}{M+1}\right).$$
 \end{proof} 	
 	In conclusion, we have
 	$$\lim_{x \to \infty}\frac{1}{\pi(x)}\left\langle (S^{\pm}(M,f)(x))^2)\right\rangle =  \mu_{\infty}(I) - \mu_{\infty}(I)^2,$$ if we let $M= M(x)$ and $k= k(x)$ to grow appropriately with respect to $x$ so that the error term is negligible.

\begin{remark}
By almost exactly the same process, one can show that
\begin{equation}\label{plusminusVM}
\begin{split}
&\left\langle S^+(M,f)(x) S^-(M,f)(x)\right\rangle = \pi(x)\left(\mu_\infty(I) -\mu_\infty(I)^2\right)\\
&+ \O\left((\log \log x)^2 + \frac{\pi(x)^2 x^{2Mc}}{k} + \dfrac{\pi(x)}{M+1}\right).
\end{split}
\end{equation}
\end{remark}
 
 \bigskip
 
 \section{Strategy for proof of main theorem}\label{convergence-distribution}
The proof of Theorem \ref{main-eigenvalues} depends on the following fundamental steps.

\bigskip

\begin{itemize}

\item[{\bf(1)}] We first show that for a suitable choice of $M = M(x),$
$$\frac{S^{\pm}(M,f)(x)}{\sqrt{\pi(x)(\mu_{\infty}(I) - \mu_{\infty}(I)^2))}}$$
converges in mean square to
$$\frac{N_I(f,x) - \pi(x)\mu_{\infty}(I)}{\sqrt{\pi(x) (\mu_{\infty}(I) - \mu_{\infty}(I)^2))}}$$
as $x \to \infty.$  This forms the content of Proposition \ref{mean-squares}.
\begin{remark}
	This convergence holds as we vary the families $\mathcal F_k$ under certain growth conditions on $k.$ As will be seen in equation (\ref{Msquare}), this convergence holds if $M$ grows faster than $\sqrt{\pi(x)}$ and we impose appropriate growth conditions on $k$ at the same time. To this end, we choose $$M=  \lfloor \sqrt{\pi(x)} \log \log x \rfloor$$ and let $k = k(x)$ run over positive even integers such that $\frac{\log k}{\sqrt{x}\log x} \to \infty$ as $x \to \infty.$ 
\end{remark}
\bigskip

\item[{\bf(2)}] For the above choice of $M = M(x),$ we then derive, for every $n \geq 1,$ 
the limit of the moments
$$\left\langle \left(\frac{S^{\pm}(M,f)(x)}{\sqrt{\pi(x)(\mu_{\infty}(I) - \mu_{\infty}(I)^2))}}\right)^n\right\rangle$$
as $x \to \infty.$  In the next section, we show (see Theorem \ref{MomentsAll}) that these converge to the Gaussian moments under the  growth conditions on weight $k$ imposed in the previous step.   \bigskip

\item[{\bf(3)}] Convergence in mean square implies convergence in distribution (see, for example, \cite[Chapter 6, Theorems 5 and 7]{RS}.  Thus, steps \textbf{(1)} and \textbf{(2)} give us 
$$\lim_{x \to \infty}\left\langle \left(\frac{N_I(f,x) - \pi(x)\mu_{\infty}(I)}{\sqrt{\pi(x) (\mu_{\infty}(I) - \mu_{\infty}(I)^2))}}\right)^n\right\rangle$$
for every $n \geq 1.$  These match the moments of the Gaussian distribution. Since the Gaussian distribution is characterized by its moments, one deduces Theorem \ref{main-eigenvalues}.
  
\end{itemize}

\bigskip
Towards the first step, we prove the following proposition:
\begin{prop}\label{mean-squares}
Let $I = [a,b] \subset [-2,2]$ be a fixed interval.  Let $M = \lfloor \sqrt{\pi(x)} \log \log x \rfloor.$  Suppose $k = k(x)$ runs over positive even integers such that $\frac{\log k}{\sqrt{x}\log x} \to \infty$ as $x \to \infty.$  Then,
$$\lim_{x \to \infty}\left\langle \left|\frac{N_I(f,x) - \pi(x)\mu_{\infty}(I) - S^{\pm}(M,f)(x)}{\sqrt{\pi(x)(\mu_{\infty}(I) - \mu_{\infty}(I)^2)}}\right|^2\right\rangle = 0.$$

\end{prop}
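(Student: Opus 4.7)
The plan is to squeeze the residual $Y_f := N_I(f,x) - \pi(x)\mu_\infty(I)$ pointwise between $S^-(M,f)(x)$ and $S^+(M,f)(x)$ (up to lower-order shifts), and then use the second-moment calculations of Section \ref{2-m} to show that the gap $|S^+-S^-|$ is negligible in mean square.

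The starting point is the effective Sato--Tate sandwich (\ref{STeffective}),
\begin{equation*}
S^-(M,f)(x) + C\frac{\pi(x)}{M+1} \;\leq\; Y_f \;\leq\; S^+(M,f)(x) + D\frac{\pi(x)}{M+1},
\end{equation*}
where $C,D$ are absolute constants. Subtracting $S^+(M,f)(x)$ (respectively $S^-(M,f)(x)$) from all sides yields the pointwise bound
\begin{equation*}
|Y_f - S^{\pm}(M,f)(x)| \;\leq\; |S^+(M,f)(x) - S^-(M,f)(x)| + O\!\left(\frac{\pi(x)}{M}\right),
\end{equation*}
valid uniformly in $f\in\mathcal F_k$. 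Squaring, applying $(a+b)^2\leq 2a^2+2b^2$, and averaging over $f$ reduces the proposition to showing that $\pi(x)^{-1}\langle (S^+(M,f)(x)-S^-(M,f)(x))^2\rangle \to 0$ and $\pi(x)/M^2\to 0$. The latter is immediate from $M = \lfloor \sqrt{\pi(x)}\log\log x\rfloor$.

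For the former, I would exploit the fact that Proposition \ref{SecondMomentApprox} together with the remark (\ref{plusminusVM}) give the \emph{same} leading term $\pi(x)(\mu_\infty(I)-\mu_\infty(I)^2)$ for all three averages $\langle (S^+)^2\rangle$, $\langle (S^-)^2\rangle$, and $\langle S^+ S^-\rangle$. Expanding
\begin{equation*}
\langle (S^+ - S^-)^2\rangle \;=\; \langle (S^+)^2\rangle + \langle (S^-)^2\rangle - 2\langle S^+ S^-\rangle,
\end{equation*}
the leading terms cancel and only an error of size $O\bigl((\log\log x)^2 + \pi(x)^2 x^{2Mc}/k + \pi(x)/M\bigr)$ remains.

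What is left is to verify that, after dividing by $\pi(x)$, each surviving term vanishes with $M=\lfloor\sqrt{\pi(x)}\log\log x\rfloor$: the terms $(\log\log x)^2/\pi(x)$ and $1/M$ are trivially $o(1)$, and the trace-formula error $\pi(x)x^{2Mc}/k$ is where the hypothesis $\log k/(\sqrt{x}\log x)\to\infty$ is used. Since $M\asymp\sqrt{x/\log x}\,\log\log x$ one has $2Mc\log x = O(\sqrt{x\log x}\,\log\log x) = o(\sqrt{x}\log x) = o(\log k)$, so $\pi(x)x^{2Mc}/k\to 0$. The only real tension in the argument is precisely this trade-off: $M$ must be large enough that the Beurling--Selberg truncation error $\pi(x)/M$ is small compared with $\sqrt{\pi(x)}$ (forcing $M\gg\sqrt{\pi(x)}$), yet small enough that $x^{2Mc}/k$ does not overwhelm the averaging. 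The balance is exactly what dictates the stated growth condition on $k$.
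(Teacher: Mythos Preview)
Your proposal is correct and follows essentially the same approach as the paper: both arguments sandwich $N_I(f,x)-\pi(x)\mu_\infty(I)$ between $S^-$ and $S^+$ via (\ref{STeffective}), then use Proposition~\ref{SecondMomentApprox} and (\ref{plusminusVM}) to show that the leading terms in $\langle(S^+-S^-)^2\rangle$ cancel, leaving only error terms that vanish for the stated choice of $M$ and growth of $k$. The only cosmetic difference is that the paper bounds the squared residual by a maximum and carries along the cross term $\frac{\pi(x)}{M+1}\langle S^+-S^-\rangle$ explicitly, whereas you absorb everything through $(a+b)^2\le 2a^2+2b^2$; both routes yield the same error budget.
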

\begin{proof}
From equation (\ref{SandNT}), we deduce the following two equations:
\begin{equation}\label{SandNT1}
C\left(\frac{\pi(x)}{M+1}\right) \leq  N_I(f,x) - \pi(x)\mu_{\infty}(I) - S^{-}{(M,f)}(x)
 \leq S^{+}{(M,f)}(x) - S^{-}{(M,f)}(x)+ D\left(\frac{\pi(x)}{M+1}\right)
\end{equation}
and
\begin{equation}\label{SandNT2}
S^{-}{(M,f)}(x) - S^{+}{(M,f)}(x)+ C\left(\frac{\pi(x)}{M+1}\right)
\leq  N_I(f,x) - \pi(x)\mu_{\infty}(I) - S^{+}{(M,f)}(x) \leq  D\left(\frac{\pi(x)}{M+1}\right).
\end{equation}
Thus, for $M \geq 1$ and a suitable positive constant $E,$
\begin{equation*}
\begin{split}
&\langle (N_I(f,x) - \pi(x)\mu_{\infty}(I) - S^{\pm}{(M,f)}(x))^2 \rangle\\
&\leq \max \left\{ \left(\frac{E\pi(x)}{M+1}\right)^2,\,\left\langle \left(S^{+}{(M,f)}(x) - S^{-}{(M,f)}(x) + \frac{E\pi(x)}{M+1}\right)^2\right\rangle \right\}\\
& \leq  \left(\frac{E\pi(x)}{M+1}\right)^2 + \max\left\{0,\,\langle (S^{+}{(M,f)}(x) - S^{-}{(M,f)}(x))^2\rangle + 2\left(\frac{E\pi(x)}{M+1}\right)\langle S^{+}{(M,f)}(x) - S^{-}{(M,f)}(x)\rangle \right\}\\
\end{split}
\end{equation*}
We observe,
$$\langle S^{+}{(M,f)}(x) - S^{-}{(M,f)}(x)\rangle = \O\left(\log \log x + \frac{\pi(x) x^{Mc}}{k}\right).$$
Moreover, combining equations (\ref{SquaresVM}) and (\ref{plusminusVM}), we know that for any $M \geq 3,$
\begin{equation*}
\begin{split}
&\langle (S^{+}{(M,f)}(x) - S^{-}{(M,f)}(x))^2\rangle\\
&= \langle S^{+}{(M,f)}(x)^2 + S^{-}{(M,f)}(x)^2 - 2S^{+}{(M,f)}(x)S^{-}{(M,f)}(x)\rangle\\
& = \O\left(\frac{\pi(x)}{M+1} + (\log \log x)^2 + \frac{\pi(x)^2 x^{2Mc}}{k}\right).
\end{split}
\end{equation*}
From the above, we deduce
\begin{equation*}
\begin{split}
&\langle (N_I(f,x) - \pi(x)\mu_{\infty}(I) - S^{\pm}{(M,f)}(x))^2 \rangle\\
&\ll \frac{\pi(x)^2}{(M+1)^2} + \frac{\pi(x)}{M+1}  + (\log \log x)^2 + \frac{\pi(x)^2 x^{2Mc}}{k} + \frac{\pi(x)}{(M+1)}\left(\log \log x + \frac{\pi(x) x^{Mc}}{k}\right)
\end{split}
\end{equation*}
Thus,
\begin{equation}\label{Msquare}
\begin{split}
&\left\langle \left|\frac{N_I(f,x) - \pi(x)\mu_{\infty}(I) - S^{\pm}(M,f)(x)}{\sqrt{\pi(x)(\mu_{\infty}(I) - \mu_{\infty}(I)^2)}}\right|^2\right\rangle\\
&\ll \frac{\pi(x)}{(M+1)^2} + \frac{1}{M+1} + \frac{(\log \log x)^2}{\pi(x)} + \frac{\pi(x) x^{2Mc}}{k} + \frac{1}{(M+1)}\left(\log \log x + \frac{\pi(x) x^{Mc}}{k}\right)
\end{split}
\end{equation}
We now choose
$$M = \lfloor \sqrt{\pi(x)}\log \log x \rfloor.$$
Thus,
$$\lim_{x \to \infty}\frac{\pi(x)}{(M+1)^2} = 0.$$
Suppose $k = k(x)$ runs over positive even integers such that $\frac{\log k}{\sqrt{x}\log x} \to \infty$ as $x \to \infty.$

\bigskip

Let us fix $0 < d < 1.$  The above growth condition on $k$ tells us that for sufficiently large values of $x,$
$$2c\sqrt{\pi(x)}\log \log x \log x + 1 < d\log k.$$
Thus,
$$\pi(x) x^{2cM} \ll k^d$$
and 
$$\lim_{x \to \infty} \frac{\pi(x) x^{2cM}}{k} = 0.$$

This proves the proposition.
\end{proof}




\section{Higher moments} \label{odd-m}
Henceforth, we set 
$$ T_M^{\pm}(x) := \frac{S^{\pm}(M,f)(x)}{\sqrt{\pi(x)}}$$
and evaluate the moments 
$$\frac{1}{s_k}\sum_{f \in \mathcal F_k}\left(T_M^{\pm}(x)\right)^n$$
for positive integers $n \geq 3$ with $M = \lfloor \sqrt{\pi(x)}\log \log x \rfloor$. 
\begin{remark}
The task of this section is to ascertain how the $n$-th moment of $T_M^{\pm}(x)$ depends on $M$ and prove that the moments indeed converge to the desired limit as $x\to \infty$ for this choice of $M$.
\end{remark}
 By definition, we have
 $$\left(T^{\pm}_M(x)\right)^{n} = \dfrac{1}{{\pi(x)}^{\frac{n}{2}}}\left[ \sum\limits_{m=1}^{M-2}(\A(m)-\A(m+2))\sum\limits_{p\leq x}a_f(p^m) + \A(M-1)\sum\limits_{p\leq x}a_f(p^{M-1}) + \A(M)\sum\limits_{p\leq x}a_f(p^M)\right]^{n}.$$
  For a prime $p,$ we have,
  \begin{equation*}
  Y^{\pm}_M(p) =  \sum\limits_{m=1}^{M}\U(m)a_f(p^m),
  \end{equation*}
  where, as before, we denote, for $M\geq 3$ and $1 \leq m \leq M,$
$$\U(m) :=
\begin{cases}
\A(m)-\A(m+2),&\text{ if }1 \leq m \leq M-2\\
\A(m),&\text{ if }m = M-1,\,M.
\end{cases}
$$
  
  Therefore, 
  $$\left(T^{\pm}_M(x)\right)^{n} = \dfrac{1}{{\pi(x)}^{\frac{n}{2}}}\left(\sum\limits_{p\leq x} Y^{\pm}_M(p)\right)^{n}.$$
 Using the multinomial formula, we may write the above equation as follows.
 \begin{equation} \label{oddm}
\left(T^{\pm}_M(x)\right)^{n} = \dfrac{1}{{\pi(x)}^{\frac{n}{2}}}\sum\limits_{u=1}^{n} \sum_{(r_1, r_2, \ldots,r_u)}^{(1)} \dfrac{n!}{r_1!r_2! \ldots r_u!} \dfrac{1}{u!} \sum\limits_{(p_1,p_2,\ldots,p_u)}^{(2)}Y^{\pm}_M(p_1)^{r_1}Y^{\pm}_M(p_2)^{r_2}\ldots Y^{\pm}_M(p_u)^{r_u},
\end{equation}
where,
\begin{enumerate}[label=(\alph{*})]
 \item The sum $ \sum\limits_{(r_1, r_2, \ldots,r_u)}^{(1)}$ is taken over tuples of positive integers $r_1, r_2, \ldots,r_u$ so that\\ $r_1+ r_2+ \cdots + r_u = n,$ that is, a partition of $n$ into $u$ positive parts.\\
 \item The sum $\sum\limits_{(p_1,p_2,\ldots,p_u)}^{(2)}$ is over $u$-tuples of distinct primes not exceeding $x$.
 \end{enumerate}
We first focus on the inner sum in equation (\ref{oddm}),
\begin{equation}\label{oddm-inner}
 \sum\limits_{(p_1,p_2,\ldots,p_u)}^{(2)}Y^{\pm}_M(p_1)^{r_1}Y^{\pm}_M(p_2)^{r_2}\ldots Y^{\pm}_M(p_u)^{r_u}
\end{equation}
for a fixed partition $(r_1, r_2, \ldots,r_u)$ of $n.$ 
\\
By repeated use of Lemma \ref{Hecke-multiplicative}, we may write, for each $1 \leq i \leq u,$
 \begin{equation}\label{Y_M}
 \begin{split}
 Y^{\pm}_M(p_i)^{r_i} &= \sum\limits_{\underline{m}_i}^{(3)} \U(\underline{m}_i)\left[ D_{r_i,\underline{m}_i}(0)+\sum\limits_{t \in \mathcal{I}(\underline{m}_i) \atop {t \geq 1}}D_{r_i,\underline{m}_i}(t)a_f(p_i^{t})\right]\\
 &=C^{\pm}_M(i) + \sum\limits_{\underline{m}_i}^{(3)} \U(\underline{m}_i)\sum\limits_{t\in \mathcal{I}(\underline{m}_i) \atop {t \geq 1}}D_{r_i,\underline{m}_i}(t)a_f(p_i^{t}),
 \end{split}
 \end{equation} 
 where\\
 
 1. $\underline{m}_i$ denotes an $r_i$-tuple $(m_{j_1},\ldots,m_{j_{r_i}})$.\\
 \bigskip
 
 2. $\sum\limits_{\underline{m}_i}^{(3)}$ denotes that the sum is taken over $r_i$-tuples $\underline{m}_i$ where $1\leq m_{j_l}\leq M$ for each $1 \leq l \leq r_i .$\\
 \bigskip
 
 3. The term $\hat{\mathcal{U}}^{\pm}(\underline{m}_i)$ denotes the product $\hat{\mathcal{U}}^{\pm}(m_{j_1})\ldots\hat{\mathcal{U}}^{\pm}(m_{j_{r_i}})$.\\
 \bigskip
 
 4. For each $r_i$-tuple $\underline{m}_i$, $\mathcal{I}(\underline{m}_i)$ denotes the set of non-negative integers $t$ that occur in the power of $p_i$ on using Lemma \ref{Hecke-multiplicative} and for each $t \in \mathcal{I}(\underline{m}_i),\,D_{r_i,\underline{m}_i}(t)$ denotes the coefficient of $a_f(p_i^{t})$ so obtained. Note that $\mathcal{I}(\underline{m}_i)$ is a finite set for each $\underline{m}_i$ depending on the parity of the sum $m_1 + m_2 + \cdots +m_{r_i}$.  In fact, using the Hecke-multiplicative relations, one deduces the following:
 \begin{equation} \label{I(m)}
 \mathcal{I}(m_1, \ldots, m_{r_i}) \subseteq
 \begin{cases}
\{0, 2, \ldots, m_1+ \cdots+m_{r_i}\}, & \text{if } m_1 +\cdots + m_{r_i} \text{ is even }\\
\{1, 3, \ldots, m_1 + \cdots + m_{r_i} \}&\text{otherwise.}
 \end{cases}
 \end{equation}  We observe that $D_{r_i,\underline{m}_i}(t)$ is independent of the prime $p_i.$  
 \bigskip
 
5. $C^{\pm}_M(i)$ is the sum of the coefficients of $a_f(1)=1$, coming from the the expansion using Lemma \ref{Hecke-multiplicative}. That is,
$$C^{\pm}_M(i) = \sum\limits_{\underline{m}_i}^{(3)} \U(\underline{m}_i)D_{r_i,\underline{m}_i}(0).$$
Observe that $C^{\pm}_M(i)$ is independent of the prime $p_i$ and is in fact a polynomial expression in $\hat{\mathcal{S}}^{\pm}(m)$, $1\leq m\leq M$.\\
 \bigskip
We now prove the following proposition:

\begin{prop}\label{Dt}
Let $1 \leq i \leq u$ and $\underline{m}_i$ be an $r_i$-tuple as specified above.  Then, for $t \in \mathcal{I}(\underline{m}_i),$
$$D_{r_i,\underline{m}_i}(t) = 
\begin{cases}
0,&\text{ if }r_i = 1,\,t = 0\\
1,&\text{ if }r_i = 1,\,t \geq 1\\
\O(1),&\text{ if }r_i = 2,\,t \geq 0\\
\O\left(M^{r_i - 2}\right),&\text{ if }r_i \geq 3,\,t\geq 1\\
\O\left(M^{r_i - 3}\right),&\text{ if }r_i \geq 3,\,t = 0.\\
\end{cases}$$
\end{prop}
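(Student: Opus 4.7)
The plan is to prove Proposition~\ref{Dt} by induction on $r_i$, using Lemma~\ref{Hecke-multiplicative} both to establish the base cases and to drive the inductive step.

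For $r_i = 1$, the product reduces to the single term $a_f(p_i^{m_{j_1}})$, so $D_{1, \underline{m}_i}(t) = 1$ precisely when $t = m_{j_1}$ (necessarily $\geq 1$) and vanishes otherwise, matching the stated values. For $r_i = 2$, Lemma~\ref{Hecke-multiplicative} gives
$$a_f(p_i^{m_{j_1}})\, a_f(p_i^{m_{j_2}}) = \sum_{l=0}^{\min(m_{j_1}, m_{j_2})} a_f(p_i^{m_{j_1}+m_{j_2}-2l}),$$
so each coefficient is $0$ or $1$, yielding the $\O(1)$ bound.

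For the inductive step with $r_i \geq 3$, I would write the $r_i$-fold product as the $(r_i-1)$-fold product times $a_f(p_i^{m_{j_{r_i}}})$. Expanding the $(r_i-1)$-fold product by the inductive hypothesis and applying Lemma~\ref{Hecke-multiplicative} to each resulting cross-term produces the recursion
$$D_{r_i, \underline{m}_i}(s) = \sum_{t} D_{r_i - 1, \underline{m}_i'}(t),$$
where the sum runs over non-negative $t$ for which there exists an integer $l \in \{0, 1, \ldots, \min(t, m_{j_{r_i}})\}$ with $t + m_{j_{r_i}} - 2l = s$. The crucial combinatorial observation is that for fixed $s \geq 1$, the admissible values of $t$ form an arithmetic progression of common difference $2$ inside $[|s - m_{j_{r_i}}|, s + m_{j_{r_i}}]$, hence number at most $m_{j_{r_i}} + 1 \leq M + 1$. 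The inductive hypothesis $D_{r_i - 1}(t) = \O(M^{r_i - 3})$ for $t \geq 1$ then yields $D_{r_i, \underline{m}_i}(s) = \O(M) \cdot \O(M^{r_i - 3}) = \O(M^{r_i - 2})$. For $s = 0$ the constraints $l \leq t$, $l \leq m_{j_{r_i}}$, and $2l = t + m_{j_{r_i}}$ force $l = m_{j_{r_i}} = t$, so only the single term $D_{r_i - 1, \underline{m}_i'}(m_{j_{r_i}})$ contributes; since $m_{j_{r_i}} \geq 1$, the inductive hypothesis supplies $D_{r_i, \underline{m}_i}(0) = \O(M^{r_i - 3})$.

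The main obstacle I anticipate is organisational rather than conceptual: one has to track the two separate estimates ($t = 0$ versus $t \geq 1$) simultaneously through the induction and verify that the sharper bound at $s = 0$ propagates correctly. The clean reduction of the $s = 0$ case to a single contributing $t = m_{j_{r_i}}$ is what drives the $\O(M^{r_i - 3})$ improvement; without this collapse the recursion would only yield $\O(M^{r_i - 2})$ uniformly, and the variance estimates downstream would be weaker. Once the base cases $r_i = 1, 2$ and the dichotomy between $s = 0$ and $s \geq 1$ are in place, the induction proceeds cleanly.
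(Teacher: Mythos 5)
Your proof is correct and follows essentially the same inductive strategy as the paper: same base cases at $r_i=1,2$, the same peel-off of a single factor $a_f(p_i^{m_{j_{r_i}}})$ driving the induction, and the same key observation that the $s=0$ case collapses to a single contributing index $t=m_{j_{r_i}}$, which is what delivers the extra power of $M$. The only (cosmetic) difference is that you count admissible $t$ directly as an arithmetic progression of length $\min(s,m_{j_{r_i}})+1 \leq M+1$, whereas the paper bounds the count crudely by $|\mathcal{I}(\underline{m}_i)| = \O_{r_i}(M)$; both yield the same $\O(M)$ factor.
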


\begin{proof}
While focusing on an $r_i$-tuple $\underline{m}_i,$ we may also denote $D_{r_i,\underline{m}_i}(t)$ as $D_{r_i}(t)$ for brevity.

The cases $r_i = 1,\,2$ are clear.  In fact, for $r_i = 2,$
we have
$$Y_M^{\pm}(p)^2 = \sum_{m_1,\,m_2 = 1}^M \U(m_1)\U(m_2)\sum_{i = 0}^{\min\{m_1,\,m_2\}}a_f(p^{m_1+m_2 - 2i})$$
$$ = \sum_{m_1,\,m_2 = 1}^M \U(m_1)\U(m_2) \sum_{t \in \mathcal{I}(m_1,m_2)}a_f(p^{t}),$$
so that the coefficient of $a_f(p^t) =1$ if $t\in \mathcal{I}(m_1,m_2)$ and zero otherwise.  In particular, if $t=0$,
\begin{equation}\label{D_2(0)}
 D_{2,(m_1,m_2)}(0) = 
 \begin{cases}
 1 & \text{if }m_1=m_2\\
 0 &\text{otherwise.}
 \end{cases}
\end{equation}

Using equation (\ref{I(m)}) for $r_i=2$, 
$$|\mathcal{I}(m_1,m_2)| \leq \left(\frac{m_1+m_2}{2}\right) + 1 \leq M+1.$$ 
We now address the case $r_i = 3.$  
Let $l \in \mathcal{I}(m_1,m_2,m_3).$
The product
$$a_f(p^{m_1})a_f(p^{m_2})a_f(p^{m_3}) $$
equals
$$ a_f(p^{m_3})\sum_{i = 0}^{\min\{m_1,\,m_2\}}a_f(p^{m_1+m_2 - 2i}).$$
We observe that in the above product,
$a_f(p^l)$ can occur at most in all possible expansions 
$$a_f(p^{m_3})a_f(p^j),\,j \in \mathcal{I}(m_1,m_2).$$
Since $D_2(t) = 1$ for all $t \in \mathcal{I}(m_1,m_2)$ and $|\mathcal{I}(m_1,m_2) | \leq M+1,$ we deduce
$$D_3(l) \leq M+1.$$
This proves $D_3(r_i) = \O(M^{r_i-2})$ for $r_i = 3.$ 

We now proceed by induction.  Assume that for some $k \geq 3,$ $D_k(l) = \O(M^{k-2}).$
We observe that for each $k$-tuple $\underline{m}_i$, 
\begin{equation}\label{k-exp}
\begin{split}
|\mathcal{I}(\underline{m}_i) | &\leq \left\lfloor \frac{m_1+m_2 + \cdots + m_k}{2}\right\rfloor + 1\\
&\leq \left\lfloor \frac{kM}{2} \right\rfloor + 1
 = \O_k(M).
 \end{split}
 \end{equation}
Now, in the expansion $$(a_f(p^{m_1})a_f(p^{m_2})\cdots a_f(p^{m_k}))a_f(p^{m_{k+1}})$$
$$= a_f(p^{m_{k+1}})\sum_{t \in \mathcal{I}(m_1,m_2,\dots ,m_k)}D_k(t)a_f(p^t),$$
any $a_f(p^l)$ can occur at most in all possible expansions
$$a_f(p^{m_{k+1}})a_f(p^j),\,j \in \mathcal{I}(m_1,m_2,\dots,m_k).$$
By induction hypothesis,
$$D_k(j) = O_k(M^{k-2}),\,j \in \mathcal{I}(m_1,m_2,\dots,m_k).$$
Thus, by equation (\ref{k-exp}), we have
\begin{equation}\label{ind-exp}
D_{k+1}(l) \leq |\mathcal{I}(\underline{m}_i) | |D_k(l)| = \O_k(M^{k-1}).
\end{equation}
Thus, by induction, we have proved that if $r_i \geq 3,\,t \geq 0,$
$$D_{r_i}(t) = \O\left(M^{r_i - 2}\right).$$
Note that the implied constant depends on $r_i.$
We now use these estimates to get a better estimate for $D_{{r_i}}(0)$ for $r_i\geq 3$. We  prove
$$D_{r_i}(0) = \O\left(M^{r_i - 3}\right),\,r_i \geq 3.$$
Equation (\ref{D_2(0)}) tells us that for $r_i = 2,\,D_{r_i}(0) \leq 1.$

For $r_i = 3,$ looking again at the expansion
$$a_f(p^{m_1})a_f(p^{m_2})a_f(p^{m_3}) = a_f(p^{m_3})\sum_{j \in \mathcal{I}(m_1,m_2)}D_2(j)a_f(p^j)$$
$$=\sum_{j \in \mathcal{I}(m_1,m_2)}\sum_{i = 0}^{\min\{j,m_3\}}D_2(j) a_f(p^{m_3+j - 2i}),$$
we observe that $m_3+j-2i = 0$ if and only if $i=j = m_3.$  Thus, 
$$D_3(0) \leq D_2(m_3)=\O(1).$$\\
In general, for ${r_i}\geq 3$,
$$a_f(p^{m_1})\cdots a_f(p^{m_{{r_i}-1}})a_f(p^{m_{r_i}}) = a_f(p^{m_{r_i}})\sum_{j \in \mathcal{I}(m_1,\ldots,m_{{r_i}-1})}D_{{r_i}-1}(j)a_f(p^j)$$
$$= \sum_{j \in \mathcal{I}(m_1,\ldots,m_{{r_i}-1})}\sum_{i = 0}^{\min\{j,m_{r_i}\}}D_{{r_i}-1}(j) a_f(p^{m_{r_i}+j - 2i}).$$ As before, $m_{r_i} +j-2i =0$ if and only if $i=j=m_{r_i}$. Therefore, $$D_{r_i}(0) \leq D_{{r_i}-1}(m_{r_i}) = \O(M^{{r_i}-1-2}) = \O(M^{r_i-3}).$$ 
Here, the implied constant depends on $r_i.$
This proves the proposition.
\end{proof}
We record the following lemma.
  \begin{lem}\label{r_i=2:V_M}
 For $r_i=2$,  $C^{\pm}_M(i) = \sum\limits_{m=1}^M \U({m})^2$. Furthermore, if we let $M= \lfloor\sqrt{\pi(x)}\log\log x \rfloor$, the following holds.
 $$\lim\limits_{x\to \infty} \sum\limits_{m=1}^M \U({m})^2 = \mu_{\infty}(I) - \mu_{\infty}(I)^2.$$
 \end{lem}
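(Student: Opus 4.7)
The first assertion is essentially a bookkeeping exercise from equations $(\ref{Y_M})$ and $(\ref{D_2(0)})$. By definition $C^{\pm}_M(i) = \sum_{\underline{m}_i}^{(3)} \U(\underline{m}_i) D_{r_i,\underline{m}_i}(0)$. For $r_i=2$, Lemma \ref{Hecke-multiplicative} gives $a_f(p^{m_1})a_f(p^{m_2}) = \sum_{l=0}^{\min(m_1,m_2)} a_f(p^{m_1+m_2-2l})$, and the constant term $a_f(1)$ appears only when $m_1+m_2-2l = 0$, i.e.\ $m_1=m_2=l$. Thus $D_{2,(m_1,m_2)}(0) = \delta_{m_1,m_2}$, giving $C^\pm_M(i) = \sum_{m=1}^M \U(m)^2$ immediately.

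The heart of the lemma is the asymptotic evaluation of $\sum_{m=1}^M \U(m)^2$. The plan is to realise this as a Parseval-type computation for a carefully chosen auxiliary function. Define $g(\theta) = \chi_{I_1}(\theta) + \chi_{-I_1}(\theta)$ on $[-1/2,1/2]$, so that $g$ is even and $\hat g(m) = \widehat{\chi}_I(m) + \widehat{\chi}_I(-m)$, which by $(\ref{AFC})$ equals $\A(m) + \O(1/M)$ for $1 \le m \le M$. The key idea is to consider $F(\theta) := (1 - e(-2\theta)) g(\theta)$, whose Fourier coefficients are exactly $\hat F(m) = \hat g(m) - \hat g(m+2)$. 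Since $|1-e(-2\theta)|^2 = 4\sin^2(2\pi\theta)$ and $g^2 = g$, Parseval yields
\begin{equation*}
\sum_{m \in \Z} \bigl(\hat g(m) - \hat g(m+2)\bigr)^2 = \int_{-1/2}^{1/2} 4\sin^2(2\pi\theta) g(\theta)\,d\theta = 8\int_{\alpha}^{\beta}\sin^2(2\pi\theta)\,d\theta = 2\mu_\infty(I),
\end{equation*}
using the identity $\mu_\infty(I) = 4\int_\alpha^\beta \sin^2(2\pi\theta)\,d\theta$ already noted in the derivation leading to $(\ref{STeffective})$.

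Next I would fold the two-sided sum into a one-sided sum using the evenness of $\hat g$. The terms at $m=0$ and $m=-2$ each contribute $(\hat g(0) - \hat g(2))^2 = \mu_\infty(I)^2$ (the value $\hat g(0) - \hat g(2) = \mu_\infty(I)$ is exactly the quantity computed in the proof of Proposition \ref{SatoTateAverage}), the term at $m=-1$ vanishes, and the remaining terms pair as $|\hat g(-k-2) - \hat g(-k)|^2 = |\hat g(k) - \hat g(k+2)|^2$ for $k \ge 1$. This yields
\begin{equation*}
\sum_{m=1}^{\infty} \bigl(\hat g(m) - \hat g(m+2)\bigr)^2 = \mu_\infty(I) - \mu_\infty(I)^2.
\end{equation*}

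Finally I would pass from $\hat g$ back to $\A$, and from an infinite series to the truncated sum $\sum_{m=1}^{M}\U(m)^2$. For $1 \le m \le M-2$ we have $\U(m) = \A(m) - \A(m+2) = (\hat g(m) - \hat g(m+2)) + \O(1/M)$, while $\U(M-1), \U(M) = \O(1/M)$. Since $F$ has bounded variation, $|\hat g(m) - \hat g(m+2)| = \O(1/m)$, so expanding squares gives a cross-term error $\O(1/M)\sum_{m\le M} 1/m = \O(\log M/M)$, quadratic error $\O(1/M)$, endpoint error $\O(1/M^2)$, and tail error $\sum_{m \ge M-1} \O(1/m^2) = \O(1/M)$. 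Altogether $\sum_{m=1}^M \U(m)^2 = \mu_\infty(I) - \mu_\infty(I)^2 + \O(\log M/M)$, which vanishes as $M = \lfloor \sqrt{\pi(x)}\log\log x \rfloor \to \infty$. The main obstacle is purely notational: keeping the symmetrisations straight between $\chi_{I_1}$, $\chi_{-I_1}$, and the symmetrised Fourier coefficient $\A(m)$; the underlying analytic content is just the Parseval identity applied to $(1-e(-2\theta))g(\theta)$.
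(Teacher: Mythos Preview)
Your proof is correct, and it takes a genuinely different route from the paper's.

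The paper argues indirectly: it computes $\frac{1}{\pi(x)}\langle S^{\pm}(M,f)(x)^2\rangle$ in two ways. On one hand, Proposition~\ref{SecondMomentApprox} gives the value $\mu_\infty(I)-\mu_\infty(I)^2$ plus acceptable errors; on the other hand, expanding $S^{\pm}(M,f)(x)=\sum_m \U(m)\sum_{p\le x}a_f(p^m)$, squaring, and applying the trace formula to the double sum over $(p_1,p_2,m_1,m_2)$ leaves precisely the diagonal $p_1=p_2$, $m_1=m_2$ contribution $\sum_m \U(m)^2$ as the main term. Equating the two evaluations and letting $x\to\infty$ yields the limit. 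This is economical because it recycles Proposition~\ref{SecondMomentApprox}, but it routes a purely Fourier-analytic identity through the Eichler--Selberg trace formula and growth conditions on $k$.

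Your argument is more direct and more elementary: you recognise $\U(m)$ (up to $O(1/M)$) as the $m$-th Fourier coefficient of $F(\theta)=(1-e(-2\theta))g(\theta)$ with $g=\chi_{I_1}+\chi_{-I_1}$, apply Parseval to get $\sum_{m\in\Z}|\hat F(m)|^2=\int 4\sin^2(2\pi\theta)g(\theta)\,d\theta=2\mu_\infty(I)$, fold the sum using $\hat g(-m)=\hat g(m)$, and then control the passage from $\hat g(m)-\hat g(m+2)$ to $\U(m)$ with an explicit $O(\log M/M)$ error. This never touches modular forms or the trace formula, and it makes transparent that the lemma is a statement about Beurling--Selberg coefficients alone; it also yields a quantitative rate of convergence that the paper's comparison argument does not. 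The only delicate point you should make explicit is that $I_1\cap(-I_1)$ has measure zero, so $g^2=g$ almost everywhere; otherwise your computation is clean.
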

 \begin{proof}
 Observe that for $r_i=2$, from equation (\ref{D_2(0)}), it follows that $$C_M^{\pm}(i) = \sum\limits_{m=1}^M \U({m})^2. $$ For the second assertion, note that
 \begin{equation*}
 \begin{split}
 \dfrac{1}{\pi(x)}\langle (S^{\pm}(M,f)(x))^2\rangle &= \dfrac{1}{\pi(x)s_k}\sum_{f \in \mathcal F_k}\sum_{m_1,m_2=1}^{M}\U(m_1)\U(m_2)\sum_{p_1,\,p_2 \leq x} a_f(p_1^{m_1})a_f(p_2^{m_2})\\
 &= \sum\limits_{m=1}^M \U({m})^2 + \O\left((\log\log x)^2 + \dfrac{\pi(x)^2x^{2Mc}}{k}\right),
 \end{split}
 \end{equation*}
using (\ref{2ndMoment-error}) for the sum over $p_1 \neq p_2$ and a similar calculation for the case $p_1 = p_2$ with $ m_1 \neq m_2$. We now plug in our choice of $M$ and compare the above equation with (\ref{SquaresVM}). The claim follows by uniqueness of limits on letting $x \to \infty$.
 \end{proof}

 Taking the product of $Y^{\pm}_M(p_i)^{r_i}$ over $i=1,\dots,u$, we may write (\ref{oddm-inner}) as
 \begin{equation}\label{Yprod}
  \sum\limits_{(p_1,p_2,\dots,p_u)}^{(2)}Y^{\pm}_M(p_1)^{r_1}\dots Y^{\pm}_M(p_u)^{r_u}=  
  \end{equation}
   $$\sum\limits_{(p_1,p_2,\dots,p_u)}^{(2)}\sum\limits_{(\underline{m}_1,\dots, \underline{m}_u)}\U(\underline{m}_1,\dots, \underline{m}_u)\sum\limits_{(t_1,\dots,t_u)}^{(4)}D_{\underline{r},\underline{m}}(\underline{t})a_f(p_1^{t_1}\dots p_u^{t_u}).
 $$
where \\
 \\
 1. $\sum\limits_{(t_1,\dots,t_u)}^{(4)}$ denotes that the sum is taken over $u$-tuples $\underline{t} = (t_1,\dots,t_u),$ where each $t_i \geq 0,$ unless otherwise specified and $t_i \in \mathcal{I}(\underline{m}_i).$  
 
 2. We abbreviate the notation by setting $$\U(\underline{m}_1,\dots,\underline{m}_u):=\U(\underline{m}_1)\dots\U(\underline{m}_u)$$ 
  and for a given tuple $\underline{m}=(\underline{m}_1,\dots,\underline{m}_u)$, 
  $$D_{\underline{r},\underline{m}}(\underline{t}):=D_{r_1,\underline{m}_1}(t_1)D_{r_2,\underline{m}_2}(t_2)\dots D_{r_u,\underline{m}_u}(t_u).
  $$

We now prove the following proposition:
 \begin{prop} \label{oddmoments-crux}
 Suppose $k = k(x)$ runs over positive even integers such that $\frac{\log k}{\sqrt{x}\log x} \to \infty$ as $x \to \infty.$  Let $M = \lfloor \sqrt{\pi(x)} \log \log x \rfloor.$  For each partition $(r_1, r_2,\ldots,r_u)$ of $n$, $$\lim\limits_{x\rightarrow\infty}\dfrac{1}{{\pi(x)}^{\frac{n}{2}}} \dfrac{1}{s_k}\sum\limits_{f\in \F_k}\sum\limits_{(p_1,p_2,\dots,p_u)}^{(2)}Y^{\pm}_M(p_1)^{r_1}Y^{\pm}_M(p_2)^{r_2}\ldots Y^{\pm}_M(p_u)^{r_u}$$ $$= 
 \begin{cases}
 (\mu_\infty(I)-\mu_\infty(I)^2)^{n/2} & \text{if } (r_1, r_2,\dots,r_u)=(2,\dots,2)\\
 0 &\text{otherwise.}
 \end{cases}$$
 \end{prop}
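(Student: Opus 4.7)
The plan is to expand the product $\prod_i Y^{\pm}_M(p_i)^{r_i}$ via equation \eqref{Yprod} and split the resulting sum according to whether the multi-index $\underline{t}=(t_1,\ldots,t_u)$ is the zero tuple (\emph{diagonal case}) or not (\emph{off-diagonal case}).

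For the diagonal case $\underline{t}=\underline{0}$, we have $a_f(1)=1$, so the contribution equals $\frac{\pi(x)!}{(\pi(x)-u)!}\prod_i C_M^{\pm}(i) = \pi(x)^u(1+o(1))\prod_i C_M^{\pm}(i)$. Proposition \ref{Dt} then forces a trichotomy. If some $r_i=1$, then $D_{1,\underline{m}_i}(0)=0$, so $C_M^{\pm}(i)=0$ and the diagonal contribution vanishes identically. If all $r_i\geq 2$ with at least one $r_i\geq 3$, combining the bound $|C_M^{\pm}(i)|=\O(M^{r_i-3}(\log M)^{r_i})$ (from Proposition \ref{Dt} and Lemma \ref{Um-bounds}) with $M\asymp\sqrt{\pi(x)}\log\log x$ gives a normalized contribution of size $\pi(x)^{-u_{\geq 3}/2}$ up to polylogarithmic factors, where $u_{\geq 3}\geq 1$ is the number of parts of size $\geq 3$; this vanishes. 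Finally, when all $r_i=2$ (so $u=n/2$), Lemma \ref{r_i=2:V_M} gives $C_M^{\pm}(i)\to\mu_\infty(I)-\mu_\infty(I)^2$, producing the claimed limit $(\mu_\infty(I)-\mu_\infty(I)^2)^{n/2}$.

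For the off-diagonal case $\underline{t}\neq\underline{0}$, I apply the Eichler--Selberg trace formula (Proposition \ref{trace}) to the inner average $\frac{1}{s_k}\sum_f a_f(\prod_i p_i^{t_i})$, producing a main term $\prod_{t_i\geq 2}p_i^{-t_i/2}$ (present only when all $t_i$ are even) plus an error of size $\O(\prod p_i^{ct_i}/k)$. The error term is dispatched by the growth hypothesis $\log k/(\sqrt{x}\log x)\to\infty$: since $\sum t_i\leq nM\ll\sqrt{x\log x}\log\log x$, the ratio $x^{cnM}/k$ decays faster than any polynomial in $x$ and overwhelms the polynomial coefficient sums bounded via Proposition \ref{Dt} and Lemma \ref{Um-bounds}. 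For the main term, the decisive observation is that each index $i\in W:=\{j:t_j\geq 1\}$ (which necessarily contains every $i$ with $r_i=1$) converts an otherwise-free factor of $\pi(x)$ into $\sum_p p^{-t_i/2}=\O(\log\log x)$. A careful bookkeeping across the partition refinement by part sizes $1$, $2$, and $\geq 3$ inside versus outside $W$ then shows that the exponent of $\pi(x)$ in the normalized main-term contribution is always at most $-1/2$, giving an overall bound of $\O(\pi(x)^{-1/2})$ up to polylogs.

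The principal obstacle is this last combinatorial bookkeeping, which must be verified uniformly across all partition types and all nonempty $W$. The underlying mechanism is clean: every active index in $W$ trades a factor of $\pi(x)$ for $\log\log x$, and the off-diagonal assumption $W\neq\emptyset$ guarantees at least one such trade always occurs, producing the $\pi(x)^{-1/2}$ suppression needed to defeat the $\pi(x)^{n/2}$ normalization.
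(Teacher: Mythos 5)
Your decomposition into the diagonal ($\underline{t}=\underline{0}$) and off-diagonal ($\underline{t}\neq\underline{0}$) parts, with the trace formula applied to the off-diagonal piece and the combinatorial bookkeeping organized around the active set $W$, is exactly the decomposition the paper carries out (there via the $\varepsilon$-tuple parameterization, where $\varepsilon_i = 1$ marks $i \in W$, and the casework on whether some $r_i=1$, all $r_i=2$, or some $r_i\geq 3$). The exponent count you outline — each $i\in W$ saving at least $\pi(x)^{1/2}$ net after accounting for the jump from $D_{r_i,\underline{m}_i}(0)=\O(M^{r_i-3})$ to $D_{r_i,\underline{m}_i}(t_i)=\O(M^{r_i-2})$, and each $r_i\geq 3$ part already contributing a deficit of $\pi(x)^{-1/2}$ in the diagonal term — is precisely what the paper's Cases 1--3 verify, so this is the same proof.
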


 \begin{proof}
 From equation (\ref{Yprod}), we have, for each partition $(r_1,\ldots,r_u)$ of $n,$ 
 \begin{equation*}
  \frac{1}{\pi(x)^{n/2}}\frac{1}{s_k}\sum_{f \in \mathcal F_k} \sum\limits_{(p_1,p_2,\ldots,p_u)}^{(2)}Y^{\pm}_M(p_1)^{r_1}\cdots Y^{\pm}_M(p_i)^{r_u}=  
   \end{equation*}
   $$\frac{1}{\pi(x)^{n/2}}\frac{1}{s_k}\sum_{f \in \mathcal F_k}\sum\limits_{(p_1,p_2,\ldots,p_u)}^{(2)}\sum\limits_{(\underline{m}_1,\ldots, \underline{m}_u)}\U(\underline{m}_1,\ldots, \underline{m}_u)\sum\limits_{(t_1,\ldots,t_u)}^{(4)}D_{\underline{r},\underline{m}}(\underline{t})a_f(p_1^{t_1}\cdots p_u^{t_u}).
  $$
 For each tuple $(\underline{m}_1,\ldots,\underline{m}_u)$, on applying Proposition \ref{trace}, we have
 $$\dfrac{1}{{\pi(x)}^{\frac{n}{2}}} \left(\dfrac{1}{s_k}\sum\limits_{f\in \F_k}\sum\limits_{(p_1,p_2,\ldots,p_u)}^{(2)}\sum\limits_{(t_1,\ldots,t_u)}^{(4)}D_{\underline{r},\underline{m}}(\underline{t}) a_f(p_1^{t_1}\cdots p_u^{t_u}) \right)=$$
 $$\dfrac{1}{\pi(x)^{\frac{n}{2}}}\sum\limits_{(p_1,p_2,\ldots,p_u)}^{(2)}\sum\limits_{(t_1,\ldots,t_u)}^{(4)}D_{\underline{r},\underline{m}}(\underline{t})\left(\dfrac{\delta(t_1,\ldots,t_u)}{(p_1^{t_1}\cdots p_u^{t_u})^\frac{1}{2}} + \O\left(\dfrac{(p_1^{t_1}\cdots p_u^{t_u})^c}{k}\right)\right),$$
 where $\delta(t_1,\ldots,t_u)=1$ if $2|t_i$ for every $t_i>0$ and  $\delta(t_1,\ldots,t_u)=0$ otherwise. Observe that for each $1 \leq i \leq u,$ $t_i$ is even if and only if the sum of the components of the corresponding $\underline{m}_i$ is even.  
 \\
 The sum 
 \begin{equation}\label{breakdown}
 \begin{split}
& \dfrac{1}{s_k}\sum\limits_{f\in \F_k}\dfrac{1}{{\pi(x)}^{\frac{n}{2}}}\sum\limits_{(p_1,p_2,\ldots,p_u)}^{(2)}Y^{\pm}_M(p_1)^{r_1}\cdots Y^{\pm}_M(p_i)^{r_u}\\
 &=\dfrac{1}{{\pi(x)}^{\frac{n}{2}}}\sum\limits_{(p_1,p_2,\ldots,p_u)}^{(2)}\sum\limits_{(\underline{m}_1,\ldots, \underline{m}_u)}^{(\star)}\U(\underline{m}_1,\ldots, \underline{m}_u) \sum\limits_{(t_1,\ldots,t_u)}^{(4)}D_{\underline{r},\underline{m}}(\underline{t})\dfrac{1}{(p_1^{t_1}\cdots p_u^{t_u})^\frac{1}{2}}\\
 &\quad + \O\left(\dfrac{1}{{\pi(x)}^{\frac{n}{2}}}\sum\limits_{(p_1,p_2,\ldots,p_u)}^{(2)}\sum\limits_{(\underline{m}_1,\ldots, \underline{m}_u)}|\U(\underline{m}_1,\ldots, \underline{m}_u)| \sum\limits_{(t_1,\ldots,t_u)}^{(4)}D_{\underline{r},\underline{m}}(\underline{t})\dfrac{(p_1^{t_1}\cdots p_u^{t_u})^c}{k}\right),
 \end{split}
 \end{equation}
where\\ $\sum\limits_{(\underline{m}_1,\ldots, \underline{m}_u)}^{(\star)}$ denotes that the sum is over those tuples such that $\delta(t_1,\ldots,t_u)=1$.
\\
The technical part of the proof lies in the analysis of the main term of equation (\ref{breakdown}), which is 
$$\dfrac{1}{{\pi(x)}^{\frac{n}{2}}}\sum\limits_{(p_1,p_2,\ldots,p_u)}^{(2)}\sum\limits_{(\underline{m}_1,\dots, \underline{m}_u)}^{(\star)}\U(\underline{m}_1,\ldots, \underline{m}_u) \sum\limits_{(t_1,\dots,t_u)}^{(4)}D_{\underline{r},\underline{m}}(\underline{t})\dfrac{1}{(p_1^{t_1}\dots p_u^{t_u})^\frac{1}{2}}$$
$$=\dfrac{1}{{\pi(x)}^{\frac{n}{2}}}\sum\limits_{(p_1,p_2,\ldots,p_u)}^{(2)}\left(\sum\limits_{\underline{m}_1}^{(\star)}\U(\underline{m}_1)\sum\limits_{{t_1\geq 0}}^{(4)}\dfrac{D_{r_1,\underline{m}_1}(t_1)}{p_1^{t_1/2}}\right)\cdots \left(\sum\limits_{\underline{m}_u}^{(\star)}\U(\underline{m}_u)\sum\limits_{{t_u\geq 0}}^{(4)}\dfrac{D_{r_u,\underline{m}_u}(t_u)}{p_u^{t_u/2}}\right).$$ The idea is to extract the terms where $t_i=0$ for each $i =1, \ldots, u$ and show that the remaining terms are negligible as $x\to \infty$.  

To this end, we write each 
$$\left(\sum\limits_{\underline{m}_i}^{(\star)}\U(\underline{m}_i)\sum\limits_{{t_i\geq 0}}^{(4)}\dfrac{D_{r_i,\underline{m}_i}(t_i)}{p_i^{t_i/2}}\right)$$
as
$$\sum\limits_{\underline{m}_i}^{(\star)} \U(\underline{m}_i)D_{r_i,\underline{m}_i}(0) + \sum\limits_{\underline{m}_i}^{(\star)}\U(\underline{m}_i)\sum\limits_{{t_i\geq 2}}^{(4)}\dfrac{D_{r_i,\underline{m}_i}(t_i)}{p_i^{t_i/2}}.$$

Therefore, denoting
$$C_M^{\pm}(i) = \sum\limits_{\underline{m}_i}^{(\star)} \U(\underline{m}_i)D_{r_i,\underline{m}_i}(0),$$
we have, for a partition $(r_1,\,r_2,\dots\,r_u)$ of $n,$
\begin{equation}\label{D(t)prod1}
\begin{split}
&\dfrac{1}{{\pi(x)}^{\frac{n}{2}}}\sum\limits_{(p_1,p_2,\ldots,p_u)}^{(2)}\left(\sum\limits_{\underline{m}_1}^{(\star)}\U(\underline{m}_1)\sum\limits_{{t_1\geq 0}}^{(4)}\dfrac{D_{r_1,\underline{m}_1}(t_1)}{p_1^{t_1/2}}\right)\cdots \left(\sum\limits_{\underline{m}_u}^{(\star)}\U(\underline{m}_u)\sum\limits_{{t_u\geq 0}}^{(4)}\dfrac{D_{r_u,\underline{m}_u}(t_u)}{p_u^{t_u/2}}\right)\\
&= \dfrac{1}{{\pi(x)}^{\frac{n}{2}}}\sum\limits_{(p_1,p_2,\ldots,p_u)}^{(2)} \left(\prod\limits_{i=1}^{u}C_M^{\pm}(i)\right)\\
&\quad +\dfrac{1}{{\pi(x)}^{\frac{n}{2}}}\sum\limits_{(p_1,p_2,\ldots,p_u)}^{(2)}\sum\limits_{(\varepsilon_1,\ldots,\varepsilon_u)}\prod\limits_{i=1}^{u} \left(C_M^{\pm}(i)\right)^{1-\varepsilon_i}\left(\sum\limits_{\underline{m}_i}^{(\star)}\U(\underline{m}_i)\sum\limits_{{t_i\geq 2}}^{(4)}\dfrac{D_{r_i,\underline{m}_i}(t_i)}{p_i^{t_i/2}}\right)^{\varepsilon_i}.
\end{split}
\end{equation}
Here, in the second term on the right hand side, $(\varepsilon_1, \varepsilon_2,\dots, \varepsilon_u)$ runs over all $u$-tuples such that for each $i=1,\dots,u$, the corresponding $\varepsilon_i \, \in \{0,1\}$ and at least one $\varepsilon_i$ is non-zero. The tuple $(0,\ldots,0)$ is accounted for by the first term. We also follow the convention that if $C^{\pm}_M(i)=0$, then $\varepsilon_i$ is fixed to be $1$ and $C^{\pm}_M(i)^{1-\varepsilon_i}=1$.
\\

Let $$\tilde{D}_{\underline{m}_i}(r_i) := \max\{D_{r_i,\underline{m}_i}(t_i):\,t_i \in \mathcal{I}_{\underline{m}_i}\}.$$ \\
Then, we have $$\sum\limits_{\underline{m}_i}^{(\star)}\U(\underline{m}_i)\sum\limits_{{t_i\geq 2}}^{(4)}\dfrac{D_{r_i,\underline{m}_i}(t_i)}{p_i^{t_i/2}} \ll \sum\limits_{\underline{m}_i}^{(\star)}\U(\underline{m}_i)\dfrac{\tilde{D}_{\underline{m}_i}(r_i)}{p_i}.$$
From this, we derive,
$$\dfrac{1}{{\pi(x)}^{\frac{n}{2}}}\sum\limits_{(p_1,p_2,\ldots,p_u)}^{(2)}\left(\sum\limits_{\underline{m}_1}^{(\star)}\U(\underline{m}_1)\sum\limits_{{t_1\geq 0}}^{(4)}\dfrac{D_{r_1,\underline{m}_1}(t_1)}{p_1^{t_1/2}}\right)\cdots \left(\sum\limits_{\underline{m}_u}^{(\star)}\U(\underline{m}_u)\sum\limits_{{t_u\geq 0}}^{(4)}\dfrac{D_{r_u,\underline{m}_u}(t_u)}{p_u^{t_u/2}}\right)$$
$$= \dfrac{1}{{\pi(x)}^{\frac{n}{2}}}\sum\limits_{(p_1,p_2,\ldots,p_u)}^{(2)} \left(\prod\limits_{i=1}^{u}C_M^{\pm}(i)\right) $$
\begin{equation}\label{D(t)prod}
+\O\left( \dfrac{1}{{\pi(x)}^{\frac{n}{2}}}\sum\limits_{(p_1,p_2,\ldots,p_u)}^{(2)}\sum\limits_{(\varepsilon_1,\ldots,\varepsilon_u)}\prod\limits_{i=1}^{u} \left|C_M^{\pm}(i)\right|^{1-\varepsilon_i}\left(\sum\limits_{\underline{m}_i}^{(\star)}|\U(\underline{m}_i)|\dfrac{\tilde{D}_{\underline{m}_i}(r_i)}{p_i}\right)^{\varepsilon_i}\right),
\end{equation}

Consider the error term in the above equation. We prove that this term vanishes as $x\to\infty$ for our choice of $M$ by showing that for each tuple $(\varepsilon_1,\ldots,\varepsilon_u)$, 
$$\lim\limits_{x\to\infty}\sum\limits_{(p_1,p_2,\ldots,p_u)}^{(2)}\prod\limits_{i=1}^{u} \left|C_M^{\pm}(i)\right|^{1-\varepsilon_i}\left(\sum\limits_{\underline{m}_i}^{(\star)}|\U(\underline{m}_i)|\dfrac{\tilde{D}_{\underline{m}_i}(r_i)}{p_i}\right)^{\varepsilon_i} =0.$$ 
First, for each tuple $(\varepsilon_1,\ldots,\varepsilon_u)$ observe that we may write $$\sum\limits_{(p_1,p_2,\ldots,p_u)}^{(2)}\prod\limits_{i=1}^{u} \left|C_M^{\pm}(i)\right|^{1-\varepsilon_i}\left(\sum\limits_{\underline{m}_i}^{(\star)}|\U(\underline{m}_i)|\dfrac{\tilde{D}_{\underline{m}_i}(r_i)}{p_i}\right)^{\varepsilon_i} $$ as $$\sum\limits_{(p_1,p_2,\ldots,p_u)}^{(2)}\left(\prod\limits_{i=1 \atop {\varepsilon_i = 0}}^{u} \left|C_M^{\pm}(i)\right|\right)\prod\limits_{i=1 \atop {\varepsilon_i = 1}}^{u}\left(\sum\limits_{\underline{m}_i}^{(\star)}|\U(\underline{m}_i)|\dfrac{\tilde{D}_{\underline{m}_i}(r_i)}{p_i}\right). $$ For $\underline{\varepsilon} = (\varepsilon_1,\ldots,\varepsilon_u),$ we define
$$\alpha(\underline{\varepsilon}):=\alpha(\varepsilon_1,\ldots,\varepsilon_u):= \#\{ 1\leq i\leq u:\, \varepsilon_i=0\}.$$
 We observe that if $r_i = 1,$ then $C_M^{\pm}(i) = 0.$  In general, for $r_i \geq 2,$ we have
$$ |C_M^{\pm}(i)| \ll \sum\limits_{\underline{m}_i}^{(\star)} |\U(\underline{m}_i)||D_{r_i,\underline{m}_i}(0)|.$$
If $r_i = 2,$ then for each $\underline{m}_i,$
$$D_{r_i,\underline{m}_i}(0) = \O(1).$$
Thus, by Lemma \ref{Um-bounds},
$$|C_M^{\pm}(i)| \ll \sum\limits_{\underline{m}_i}^{(\star)} |\U(\underline{m}_i)| \ll (\log M)^{r_i}.$$
On the other hand, if $r_i \geq 3,$ then, by Proposition \ref{Dt}, for each $\underline{m}_i,$
$$D_{r_i,\underline{m}_i}(0) = \O(M^{r_i-3}).$$
Once again, by Lemma \ref{Um-bounds},
\begin{equation}\label{Dtzero}
\begin{split}
 |C_M^{\pm}(i)|
 &\ll \sum\limits_{\underline{m}_i}^{(\star)} |\U(\underline{m}_i)||D_{r_i,\underline{m}_i}(0)|\\
 & \ll 
 \begin{cases}
 (\log M)^{r_i} &\text{ if } r_i = 1,2\\
  M^{r_i-3}(\log M)^{r_i} &\text{ if } r_i \geq 3.
  \end{cases}
  \end{split}
 \end{equation}
 Similarly, by another application of Proposition \ref{Dt} and Lemma \ref{Um-bounds} , we have
 \begin{equation}\label{Dtnonzero}
 \sum\limits_{\underline{m}_i}^{(\star)}|\U(\underline{m}_i)|\dfrac{\tilde{D}_{\underline{m}_i}(r_i)}{p_i} 
 \ll 
 \begin{cases}
\frac{ (\log M)}{p_i} &\text{ if }r_i =1\\
 \frac{M^{r_i-2}(\log M)^{r_i}}{p_i} &\text{ if }r_i \geq 2.
 \end{cases}
 \end{equation}
 The partition $(r_1,\,r_2,\dots ,r_u)$ can be of two types as described below.
 \\
 
 \textbf{Case 1:} The partition $(r_1,\ldots,r_u)$ satisfies the condition $r_i>1$ for $i=1,\ldots,u$. Observe that this means $u\leq \frac{n}{2}$.\\
 In this case, by equations (\ref{Dtzero}) and (\ref{Dtnonzero}), for each tuple $(\varepsilon_1,\ldots,\varepsilon_u),$ we have
 \begin{equation}
 \begin{split}
 &\sum\limits_{(p_1,p_2,\ldots,p_u)}^{(2)}\left(\prod\limits_{i=1 \atop {\varepsilon_i = 0}}^{u} \left|C_M^{\pm}(i)\right|\right)\prod\limits_{i=1 \atop {\varepsilon_i = 1}}^{u}\left(\sum\limits_{\underline{m}_i}^{(\star)}|\U(\underline{m}_i)|\dfrac{\tilde{D}_{\underline{m}_i}(r_i)}{p_i}\right)\\
 &\ll \sum\limits_{(p_1,p_2,\ldots,p_u)}^{(2)}\left(\prod\limits_{i=1 \atop {\varepsilon_i = 0}}^{u} M^{r_i - 2}(\log M)^{r_i}\right)\left(\prod\limits_{i=1 \atop {\varepsilon_i = 1}}^{u}M^{r_i-2}\frac{(\log M)^{r_i}}{p_i}\right)\\
 &\\
 &\ll M^{n - 2\alpha(\underline{\varepsilon}) - 2(u - \alpha(\underline{\varepsilon}))}(\log M)^n\sum\limits_{(p_1,p_2,\ldots,p_u)}^{(2)}\dfrac{1}{\prod\limits_{{i=1}\atop \varepsilon_i=1}^{u}p_i}\\
 &\\
 &\ll M^{n - 2\alpha(\underline{\varepsilon}) - 2(u - \alpha(\underline{\varepsilon}))}(\log M)^n\pi(x)^{\alpha(\underline{\varepsilon})}(\log \log x)^{u - \alpha(\underline{\varepsilon})}\\
 &\\
 &\ll M^{n-2u}\pi(x)^{\alpha(\underline{\varepsilon})}(\log \log x)^{u - \alpha(\underline{\varepsilon})}(\log M)^n.
 \end{split}
 \end{equation}
 We now choose $M = \lfloor \sqrt{\pi(x)} \log \log x \rfloor.$  The above error term is
 $$\ll \pi(x)^{\frac{n}{2} - u}\pi(x)^{u-1}(\log \log x)^{u} (\log x)^n,$$
 since $\alpha(\underline{\varepsilon}) \leq u-1.$
 Thus, for each tuple $(\varepsilon_1,\ldots,\varepsilon_u),$
 $$\lim_{x \to \infty}\frac{1}{\pi(x)^{n/2}}\sum\limits_{(p_1,p_2,\ldots,p_u)}^{(2)}\left(\prod\limits_{i=1 \atop {\varepsilon_i = 0}}^{u} \left|C_M^{\pm}(i)\right|\right)\prod\limits_{i=1 \atop {\varepsilon_i = 1}}^{u}\left(\sum\limits_{\underline{m}_i}^{(\star)}|\U(\underline{m}_i)|\dfrac{\tilde{D}_{\underline{m}_i}(r_i)}{p_i}\right)$$
 $$ \ll \lim_{x \to \infty}\frac{1}{\pi(x)^{\frac{n}{2}}}\pi(x)^{\frac{n}{2} - 1}(\log \log x)^{u} (\log x)^n = 0.$$
 \\
 \textbf{Case 2:} The partition $(r_1,\ldots,r_u)$  has at least one component $r_i$ equal to 1. Let $l$ be the number of $1$'s in the partition. Without loss of generality, we may assume that the last $l$ parts are equal to one while $r_1,\ldots,r_{u-l}$ are at least $2$. By our convention, since $C^{\pm}_M(i)=0$ if $r_i=1$, we have $\varepsilon_{i}=1$ for $u-l+1\leq i\leq u$. Also, if $r_i=1$, $\tilde{D}_{\underline{m}_i}(r_i)=1$.
 For $\underline{\varepsilon}= (\varepsilon_1,\ldots,\varepsilon_u)=(\varepsilon_1,\ldots,\varepsilon_{u-l},1,\ldots,1)$, let $$\alpha_l(\underline{\varepsilon})= \#\{1\leq i\leq u-l: \varepsilon_i=0\}.$$
 Therefore, if the partition in consideration has $l$ components equal to $1$, we have
 $$\sum\limits_{(p_1,p_2,\ldots,p_u)}^{(2)}\sum\limits_{(\varepsilon_1,\ldots,\varepsilon_u)}\prod\limits_{i=1}^{u} \left|C_M^{\pm}(i)\right|^{1-\varepsilon_i}\left(\sum\limits_{\underline{m}_i}^{(\star)}|\U(\underline{m}_i)|\dfrac{\tilde{D}_{\underline{m}_i}(r_i)}{p_i}\right)^{\varepsilon_i}$$
 $$= \sum\limits_{(p_1,p_2,\ldots,p_u)}^{(2)}\sum\limits_{(\varepsilon_1,\ldots,\varepsilon_{u-l})}\prod\limits_{i=1}^{{u-l}}\left[ \left|C_M^{\pm}(i)\right|^{1-\varepsilon_i}\left(\sum\limits_{\underline{m}_i}^{(\star)}|\U(\underline{m}_i)|\dfrac{\tilde{D}_{\underline{m}_i}(r_i)}{p_i}\right)^{\varepsilon_i}\right]\prod\limits_{i={u-l+1}}^{u}\left( \sum\limits_{\underline{m}_i}^{(\star)}|\U(\underline{m}_i)|\dfrac{1}{p_i}\right).$$
 Again, using equations (\ref{Dtzero}) and (\ref{Dtnonzero}) as well as Lemma \ref{Um-bounds}, for each tuple $(\varepsilon_1,\ldots,\varepsilon_{u-l},1,\ldots,1)$ we have
 \begin{equation}
 \begin{split}
&\sum\limits_{(p_1,p_2,\ldots,p_u)}^{(2)}\left(\prod\limits_{i=1 \atop {\varepsilon_i = 0}}^{u-l} \left|C_M^{\pm}(i)\right|\right)\prod\limits_{i=1 \atop {\varepsilon_i = 1}}^{u-l}\left(\sum\limits_{\underline{m}_i}^{(\star)}|\U(\underline{m}_i)|\dfrac{\tilde{D}_{\underline{m}_i}(r_i)}{p_i}\right)\prod\limits_{i=u-l+1}^{u}\left( \sum\limits_{\underline{m}_i}^{(\star)}|\U(\underline{m}_i)|\dfrac{\tilde{D}_{\underline{m}_i}(1)}{p_i}\right)\\
 &\ll \sum\limits_{(p_1,p_2,\ldots,p_u)}^{(2)}\left(\prod\limits_{i=1 \atop {\varepsilon_i = 0}}^{u-l} M^{r_i - 2}(\log M)^{r_i}\right)\left(\prod\limits_{i=1 \atop {\varepsilon_i = 1}}^{u-l}M^{r_i-2}\frac{(\log M)^{r_i}}{p_i}\right)\dfrac{(\log M)^l}{p_{u-l+1}\cdots p_u}\\
 &\\
 &\ll M^{n - l-2\alpha_l(\underline{\varepsilon}) - 2(u - l-\alpha_l(\underline{\varepsilon}))}(\log M)^n\sum\limits_{(p_1,p_2,\ldots,p_u)}^{(2)}\dfrac{1}{\prod\limits_{{i=1}\atop \varepsilon_i=1}^{u}p_i}\\
 &\\
 &\ll M^{n - 2\alpha_l(\underline{\varepsilon}) - 2(u -l- \alpha_l(\underline{\varepsilon}))}(\log M)^n\pi(x)^{\alpha(\underline{\varepsilon})}(\log \log x)^{u - \alpha_l(\underline{\varepsilon})}\\
 &\\
 &\ll M^{n-l-2(u-l)}\pi(x)^{\alpha_l(\underline{\varepsilon})}(\log \log x)^{u - \alpha_l(\underline{\varepsilon})}(\log M)^n.
 \end{split}
 \end{equation}
 Substituting our chosen value for $M$ and using the bound $\alpha_l(\underline{\varepsilon})\leq u-l$, the above error term is 
 $$\ll \pi(x)^{\frac{n}{2}-\frac{l}{2}}(\log\log x)^{u}(\log x)^n.$$
 Therefore,
 $$\lim\limits_{x\to\infty} \dfrac{1}{\pi(x)^{\frac{n}{2}}} \sum\limits_{(p_1,p_2,\ldots,p_u)}^{(2)}\left(\prod\limits_{i=1 \atop {\varepsilon_i = 0}}^{u-l} \left|C_M^{\pm}(i)\right|\right)\prod\limits_{i=1 \atop {\varepsilon_i = 1}}^{u-l}\left(\sum\limits_{\underline{m}_i}^{(\star)}|\U(\underline{m}_i)|\dfrac{\tilde{D}_{\underline{m}_i}(r_i)}{p_i}\right)\prod\limits_{i=u-l+1}^{u}\left( \sum\limits_{\underline{m}_i}^{(\star)}|\U(\underline{m}_i)|\dfrac{\tilde{D}_{\underline{m}_i}(1)}{p_i}\right)$$
 $$\ll \lim\limits_{x\to\infty} \dfrac{1}{\pi(x)^{\frac{n}{2}}}\pi(x)^{\frac{n}{2}-\frac{1}{2}}(\log\log x)^{u}(\log x)^n=0$$
 noting that $l\geq 1$.

From the analysis in Cases 1 and 2, we deduce that for any partition $(r_1,\,r_2,\dots r_u)$ of $n$, the error term in equation (\ref{D(t)prod}) vanishes in the limit. That is,

 \begin{equation}\label{rhs-2}
\lim_{x \to \infty} \dfrac{1}{{\pi(x)}^{\frac{n}{2}}}\sum\limits_{(p_1,p_2,\ldots,p_u)}^{(2)}\sum\limits_{(\varepsilon_1,\ldots,\varepsilon_u)}\prod\limits_{i=1}^{u} \left(C_M^{\pm}(i)\right)^{1-\varepsilon_i}\left(\sum\limits_{\underline{m}_i}^{(\star)}\U(\underline{m}_i)\sum\limits_{{t_i\geq 0}}^{(4)}\dfrac{D_{r_i,\underline{m}_i}(t_i)}{p_i^{t_i/2}}\right)^{\varepsilon_i} = 0,
\end{equation}
where we are summing over all tuples $(\varepsilon_1, \varepsilon_2,\dots \varepsilon_u)$ with at least one $\varepsilon_i$ is non-zero.

From equations (\ref{D(t)prod1}) and (\ref{rhs-2}), we deduce that for a partition $(r_1,\,r_2,\dots r_u)$ of $n,$ \begin{equation}\label{D(t)prod1a}
\begin{split}
&\lim_{x \to \infty} \dfrac{1}{{\pi(x)}^{\frac{n}{2}}}\sum\limits_{(p_1,p_2,\ldots,p_u)}^{(2)}\left(\sum\limits_{\underline{m}_1}^{(\star)}\U(\underline{m}_1)\sum\limits_{{t_1\geq 0}}^{(4)}\dfrac{D_{r_1,\underline{m}_1}(t_1)}{p_1^{t_1/2}}\right)\cdots \left(\sum\limits_{\underline{m}_u}^{(\star)}\U(\underline{m}_u)\sum\limits_{{t_u\geq 0}}^{(4)}\dfrac{D_{r_u,\underline{m}_u}(t_u)}{p_u^{t_u/2}}\right)\\
&= \lim\limits_{x \to \infty} \dfrac{1}{{\pi(x)}^{\frac{n}{2}}}\sum\limits_{(p_1,p_2,\ldots,p_u)}^{(2)} \left(\prod\limits_{i=1}^{u}C_M^{\pm}(i)\right).
\end{split}
\end{equation}

We now study the term
$$\dfrac{1}{{\pi(x)}^{\frac{n}{2}}}\sum\limits_{(p_1,p_2,\ldots,p_u)}^{(2)} \left(\prod\limits_{i=1}^{u}C_M^{\pm}(i)\right)$$
as $x \to \infty.$
\\
The partitions $(r_1,\,r_2,\dots r_u)$ are of three different types as described below.
\\
\textbf{Case 1:}
If $(r_1,\,r_2,\dots r_u) = (2,\,2,\dots 2),$ then $u = n/2$ and
$$\dfrac{1}{{\pi(x)}^{\frac{n}{2}}}\sum\limits_{(p_1,p_2,\ldots,p_u)}^{(2)} \left(\prod\limits_{i=1}^{u}C_M^{\pm}(i)\right) = \left(\prod\limits_{i=1}^{u}C_M^{\pm}(i)\right)\dfrac{\pi(x)(\pi(x) - 1)(\pi(x) - 2) \dots (\pi(x) - n/2 + 1)}{{\pi(x)}^{\frac{n}{2}}}$$
$$= \dfrac{1}{\pi(x)^{\frac{n}{2}}}\left(\sum_{m_1}^{M}\U(m)^2\right)^{n/2} \left(\pi(x)^{\frac{n}{2}} + \o (\pi(x)^{\frac{n}{2}} )\right),$$
using Lemma {\ref{r_i=2:V_M}}.
\\
\textbf{Case 2:}
 If $r_i = 1$ for some $r_i$ in the given partition, then the corresponding $C_M^{\pm}(i)$ is 0.  Thus,
 $$\dfrac{1}{{\pi(x)}^{\frac{n}{2}}}\sum\limits_{(p_1,p_2,\ldots,p_u)}^{(2)} \left(\prod\limits_{i=1}^{u}C_M^{\pm}(i)\right) = 0.$$
 \\
\textbf{Case 3:} Each $r_i \geq 2$ with at least one $r_i\geq 3.$  Without loss of generality, for some $1 \leq l \leq  u,$ suppose we have $r_1,\,r_2,\,\dots\,r_l \geq 3$ and $r_{l+1} = \dots = r_u = 2.$
\\

Thus, $(r_1 + \dots +r_l) + 2(u-l) = n.$  
 By equation (\ref{Dtzero}),
 $$ \left|\prod\limits_{i=1}^{u}C_M^{\pm}(i)\right| \ll M^{\sum_{i=1}^l (r_i - 3)}(\log M)^n$$
$$ \ll M^{n - 2(u-l)- 3l} (\log M)^n = M^{n - l - 2u}(\log M)^n.$$
Choosing $M = \lfloor \sqrt{\pi(x)} \log \log x \rfloor,$
$$ \frac{1}{\pi(x)^{n/2}}\sum\limits_{(p_1,p_2,\ldots,p_u)}^{(2)}\left|\prod\limits_{i=1}^{u}C_M^{\pm}(i)\right|$$
$$ \ll \frac{1}{\pi(x)^{n/2}} (\pi(x))^{\frac{n}{2} - \frac{l}{2} - u+ u}(\log x)^n.$$
Since $l \geq 1,$
$$\lim_{x \to \infty}\dfrac{1}{{\pi(x)}^{\frac{n}{2}}}\sum\limits_{(p_1,p_2,\ldots,p_u)}^{(2)} \left(\prod\limits_{i=1}^{u}C_M^{\pm}(i)\right) = 0.$$
From the above three cases and the second assertion in Lemma {\ref{r_i=2:V_M}}, we deduce that for $M = \lfloor \sqrt{\pi(x)} \log \log x \rfloor,$
\begin{equation}\label{D(t)prod1b}
\begin{split}
&\lim_{x \to \infty}\dfrac{1}{{\pi(x)}^{\frac{n}{2}}}\sum\limits_{(p_1,p_2,\ldots,p_u)}^{(2)} \left(\prod\limits_{i=1}^{u}C_M^{\pm}(i)\right)\\
\\
&=
\begin{cases}
 (\mu_{\infty}(I)-\mu_{\infty}(I)^2)^{n/2} & \text{if } (r_1, r_2,\ldots,r_u)=(2,\ldots,2)\\
 0 &\text{otherwise.}
 \end{cases}
 \end{split}
\end{equation}
This concludes the analysis of the main term in equation (\ref{breakdown}). We now look at the error term of the same equation, which is
$$\O\left(\dfrac{1}{{\pi(x)}^{\frac{n}{2}}}\sum\limits_{(p_1,p_2,\ldots,p_u)}^{(2)}\sum\limits_{(\underline{m}_1,\ldots, \underline{m}_u)}|\U(\underline{m}_1,\ldots, \underline{m}_u)| \sum\limits_{(t_1,\ldots,t_u)}^{(4)}D_{\underline{r},\underline{m}}(\underline{t})\dfrac{(p_1^{t_1}\cdots p_u^{t_u})^c}{k}\right).$$
We observe that for each $i,$
$$\sum_{t_i \geq 0 \atop {t_i \in \mathcal{I}(\underline{m}_i)}}(p_i^c)^{t_i} \ll p_i^{cr_iM}.$$
Thus, by Proposition \ref{Dt} and Lemma \ref{Um-bounds}, the above error term from equation (\ref{breakdown}) becomes
\begin{equation*}
\begin{split}
&= \O\left(\dfrac{1}{{\pi(x)}^{\frac{n}{2}}}\pi(x)^{u}(\log M)^nM^{n - 2u}\sum\limits_{(p_1,p_2,\ldots,p_u)}^{(2)}\frac{p_1^{cr_1M}p_2^{cr_2M} \dots p_u^{cr_uM}}{k}\right)\\
&= \O\left(\dfrac{1}{{\pi(x)}^{\frac{n}{2}}}\pi(x)^{u}(\log M)^nM^{n - 2u}\pi(x)^u\frac{x^{cnM}}{k}\right).
\end{split}
\end{equation*}
For $M = \lfloor \sqrt{\pi(x)} \log \log x \rfloor,$ this is
\begin{equation}
\O\left(\dfrac{1}{{\pi(x)}^{\frac{n}{2}}}\pi(x)^{u}(\log x)^n(\pi(x))^{n/2 - u}(\log \log x)^n\pi(x)^u\frac{x^{cn\sqrt{\pi(x)}\log \log x}}{k}\right).
\end{equation}
Let
$$\frac{\log k}{\sqrt{x}\log x} \to \infty \text{ as } x \to \infty.$$
Then, for $0 < d <1,$
$$(cn\sqrt{\pi(x)}\log\log x + u) \log x \leq d \log k$$
for sufficiently large values of $x.$ 

In particular, given $n \geq 1$ and $M = \lfloor \sqrt{\pi(x)} \log \log x \rfloor,$
$$\lim\limits_{x\to\infty}\dfrac{1}{{\pi(x)}^{\frac{n}{2}}}\pi(x)^{u}(\log M)^nM^{n - 2u}\pi(x)^u\frac{x^{cnM}}{k} = 0.$$
This proves Proposition \ref{oddmoments-crux}.

 \end{proof}
 
Using Proposition \ref{oddmoments-crux} in equation (\ref{oddm}),
we deduce, under the same assumptions on $M$ and $k$ as above,
\begin{equation}\label{44}
\begin{split}
&\lim_{x \to \infty}\frac{1}{s_k}\sum_{f \in \mathcal F_k}\left(T^{\pm}_M(x)\right)^{n} \\
&= \lim_{x \to \infty} \dfrac{1}{{\pi(x)}^{\frac{n}{2}}}\sum\limits_{u=1}^{n} \sum_{(r_1, r_2, \ldots,r_u)}^{(1)} \dfrac{n!}{r_1!r_2! \ldots r_u!} \dfrac{1}{u!} \sum\limits_{(p_1,p_2,\ldots,p_u)}^{(2)}Y^{\pm}_M(p_1)^{r_1}Y^{\pm}_M(p_2)^{r_2}\ldots Y^{\pm}_M(p_u)^{r_u}\\
&=\sum\limits_{u=1}^{n} \sum_{(r_1, r_2, \ldots,r_u)}^{(1)} \dfrac{n!}{r_1!r_2! \ldots r_u!} \dfrac{1}{u!}
\begin{cases}
(\mu_{\infty}(I)-\mu_{\infty}(I)^2)^{u} & \text{if } (r_1, r_2,\dots,r_u)=(2,\,2,\,2,\dots,2)\\
 0 &\text{otherwise}
 \end{cases}\\
 \\
 &=\begin{cases}
 0 &\text{ if } n \text{ is odd}\\
\dfrac{n!}{(\frac{n}{2})!2^{\frac{n}{2}}}(\mu_{\infty}(I)-\mu_{\infty}(I)^2)^{n/2}&\text{ if }n \text{ is even}
\end{cases}
\end{split}
\end{equation}
Thus, by equation (\ref{44}), we have proved the following theorem:
\begin{thm}\label{MomentsAll}
Let $I = [a,b]$ be a fixed interval in $[-2,2].$  Let $M = \lfloor \sqrt{\pi(x)} \log \log x \rfloor$ and suppose $k = k(x)$ runs over positive even integers such that $\frac{\log k}{\sqrt{x}\log x} \to \infty$ as $x \to \infty.$  Then, for a positive integer $n \geq 1,$
$$\lim_{x \to \infty}\left\langle \left(\frac{\ST}{\sqrt {\pi(x)(\mu_{\infty}(I) - \mu_{\infty}(I)^2) }}\right)^n\right\rangle = 
\begin{cases}  0 &\text{ if }$n$\text{ is odd}\\
\dfrac{n!}{(\frac{n}{2})!2^{\frac{n}{2}}} &\text{ if }n \text{ is even}.
\end{cases}$$
\end{thm}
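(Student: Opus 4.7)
The plan is to use the method of moments, expanding $(T^{\pm}_M(x))^n$ via the multinomial formula, sorting the resulting terms by the partition of $n$ they correspond to, and showing that only the "pair partition" $(2,2,\ldots,2)$ survives in the limit, thereby producing exactly the Gaussian moment $n!/((n/2)! 2^{n/2})$ for $n$ even and zero for $n$ odd. Concretely, I would write
$$\bigl(T^{\pm}_M(x)\bigr)^n = \frac{1}{\pi(x)^{n/2}} \Bigl(\sum_{p \leq x} Y^{\pm}_M(p)\Bigr)^n$$
and apply the multinomial theorem, grouping the contributions by the number $u$ of distinct primes appearing and by the exponent pattern $(r_1,\ldots,r_u)$ with $r_1+\cdots+r_u=n$.

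For each fixed partition I would use Lemma \ref{Hecke-multiplicative} to expand each $Y^{\pm}_M(p_i)^{r_i}$ as $C^{\pm}_M(i) + \sum_{\underline{m}_i} \U(\underline{m}_i)\sum_{t_i \geq 1}D_{r_i,\underline{m}_i}(t_i) a_f(p_i^{t_i})$, where $C^{\pm}_M(i)$ collects the $a_f(1)$-terms. Then the Eichler--Selberg trace formula (Proposition \ref{trace}) kills the sum over $f\in\mathcal F_k$ for any product $a_f(p_1^{t_1}\cdots p_u^{t_u})$ that is not a perfect square, up to a negligible error controlled by the growth condition $\log k/(\sqrt x \log x)\to\infty$. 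The perfect-square contributions correspond precisely to the diagonal terms where $t_i=0$ for every $i$, producing $\prod_{i=1}^{u}C^{\pm}_M(i)$ times the prime-tuple count $\pi(x)(\pi(x)-1)\cdots(\pi(x)-u+1)$.

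Next, I would classify partitions into three cases. If some $r_i=1$, then $C^{\pm}_M(i)=0$, killing the contribution outright. If every $r_i=2$, then by Lemma \ref{r_i=2:V_M} (a direct consequence of the second moment calculation in Proposition \ref{SecondMomentApprox}) we have $C^{\pm}_M(i)\to \mu_\infty(I)-\mu_\infty(I)^2$ as $x\to\infty$, and the prime count gives $\pi(x)^{n/2}(1+o(1))$, so the total contribution is $(\mu_\infty(I)-\mu_\infty(I)^2)^{n/2}\cdot \pi(x)^{n/2}/\pi(x)^{n/2}$ times the multinomial weight. If some $r_i\geq 3$, then Proposition \ref{Dt} gives $D_{r_i,\underline{m}_i}(0)=O(M^{r_i-3})$, and combined with Lemma \ref{Um-bounds} we get $|C^{\pm}_M(i)|\ll M^{r_i-3}(\log M)^{r_i}$; this forces the total bound to be of order $\pi(x)^{(n-l)/2}(\log x)^n/\pi(x)^{n/2}$ for some $l\geq 1$, which vanishes for the choice $M=\lfloor\sqrt{\pi(x)}\log\log x\rfloor$. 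Adding over partitions, the multinomial coefficient $n!/(r_1!\cdots r_u! \cdot u!)$ evaluated at $(2,\ldots,2)$ with $u=n/2$ gives exactly $n!/((n/2)! 2^{n/2})$ for $n$ even.

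The main obstacle will be controlling the third case cleanly: one must track simultaneously the polynomial growth in $M$ of the coefficients $D_{r_i,\underline{m}_i}(t)$, the logarithmic growth of the $\U$-sums, the powers of $\pi(x)$ coming from the multiple prime sum, and the $x^{cnM}/k$ trace-formula error. The calibration $M=\lfloor\sqrt{\pi(x)}\log\log x\rfloor$ together with $\log k/(\sqrt x \log x)\to\infty$ is precisely what balances these competing factors so that every non-pair-partition contribution tends to zero while the diagonal $(2,\ldots,2)$ survives. Once Proposition \ref{oddmoments-crux} establishing the per-partition limits is in place, the remaining work is a bookkeeping exercise matching the surviving combinatorial coefficient to the Gaussian moment formula.
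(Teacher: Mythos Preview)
Your proposal is correct and follows essentially the same approach as the paper: multinomial expansion into partitions $(r_1,\ldots,r_u)$, expansion of each $Y_M^\pm(p_i)^{r_i}$ via Lemma~\ref{Hecke-multiplicative}, application of the trace formula, and the three-case analysis of partitions using Proposition~\ref{Dt}, Lemma~\ref{Um-bounds}, and Lemma~\ref{r_i=2:V_M}, all assembled in Proposition~\ref{oddmoments-crux}. One small clarification: the perfect-square contribution from the trace formula is not literally ``$t_i=0$ for every $i$'' but rather ``all $t_i$ even''; the paper handles the even $t_i\geq 2$ terms separately (they contribute factors of $1/p_i^{t_i/2}$ and are shown to be negligible via the $\varepsilon$-decomposition in the proof of Proposition~\ref{oddmoments-crux}), so be sure your write-up does not conflate these two reductions.
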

 Thus, by Proposition \ref{mean-squares} and Theorem \ref{MomentsAll}, the proof of Theorem \ref{main-eigenvalues} follows since convergence in mean square implies convergence in distribution and the Gaussian distribution is characterized by its moments.  

\section*{Acknowledgements}
We are very grateful to Ze\'{e}v Rudnick for valuable inputs and guidance during the preparation of this article.  We would like to thank Amir Akbary, A. Raghuram, Stephan Baier, Baskar Balasubramanyam, Abhishek Banerjee, Anup Biswas and M. Ram Murty for helpful discussions.  We also thank the referees for their suggestions which have helped to improve the presentation of this article.


\begin{thebibliography}{99}

\bibitem{BGHT} Barnet-Lamb, T., D. Geraghty, M. Harris and R. Taylor. ``A family of Calabi-Yau varieties and potential automorphy II." {\it Publ. Res. Inst. Math. Sci.} 47, no. 1 (2011): 29--98.

\bibitem{Billingsley} Billingsley, P. \textit{Probability and measure}. Third Edition.  Wiley Ser. Probab. Stat. John Wiley and Sons, Inc., New York (1995).


\bibitem{BDFL} Bucur A., C. David, B. Feigon and M. Lal{\'{\i}}n. ``Statistics for ordinary Artin-Schreier covers and other p-rank strata." \textit{Trans. Amer. Math. Soc.} 368, no. 4 (2016): 2371--413.

\bibitem{BDFLS} Bucur A., C. David, B. Feigon, M. Lal{\'{\i}}n and K. Sinha. ``Distribution of zeta zeroes of Artin-Schreier curves." \textit{Math. Res. Lett.} 19, no. 6 (2012): 1329--56.

\bibitem{BK} Bucur A. and K. Kedlaya. ``An application of the effective Sato-Tate conjecture." \textit{Frobenius Distributions: Lang-Trotter and Sato-Tate conjectures}, Contemp. Math. 663, American Mathematical Society, Providence, RI, 2016, 45-56.

\bibitem{CDF} Conrey, J. B., W. Duke and D. W. Farmer. ``The distribution of the eigenvalues of Hecke operators." {\it Acta Arith.} 78, no. 4 (1997): 405--9.

\bibitem{Deligne} Deligne, P.  ``La conjecture de Weil. I", \textit{ Inst. Hautes \'{E}tudes Sci. Publ. Math.} 43 (1974): 273--307.

\bibitem{FR} Faifman, D. and Z. Rudnick, ``Statistics of the zeros of zeta functions in families of hyperelliptic curves over a finite field", \textit{Compos. Math.} 146, no. 1 (2010): 81--101.

\bibitem{ILS} Iwaniec, H., W. Luo and P. Sarnak. ``Low lying zeros of families of L-functions",
\textit{Inst. Hautes \'{E}tudes Sci. Publ. Math.} 91 (2000): 55-131.

\bibitem{Langlands} Langlands, R. P. \textit {Problems in the theory of automorphic forms}, in \textit{Lectures in modern analysis and applications}, Lecture Notes in Math. 170 (1970), Springer, Berlin: 18--61.

\bibitem{LW} Lau, Y.K. and Y. Wang. ``Quantitative version of the joint distribution of eigenvalues of the Hecke operators", \textit{J. Number Theory} 131, no. 12 (2011): 2262--81.

\bibitem{Mont} Montgomery, H. L. \textit{Ten lectures on the interface between analytic number theory and harmonic analysis.} CBMS Reg. Conf. Ser. Math. 84.  Washington, DC: Conference Board of the Mathematical Sciences, 1994.

\bibitem{Murty-Murty} Murty, M. R. and V. K. Murty ``Some remarks on automorphy and the Sato-Tate conjecture." {Advances in the theory of numbers}, \textit{Fields Inst. Commun.} {77} (2015): 159--68.

\bibitem{VKMurty} Murty, V. K. ``Explicit formulae and the Lang-Trotter conjecture."  \textit{Rocky Mountain J. Math.} {15}, no. 2 (1985): 535--51.

\bibitem{MS} Murty, M. R. and K. Sinha. ``Effective equidistribution of eigenvalues of Hecke operators." \textit{J. Number Theory}, {129}, no. 3 (2009), 681--714.

\bibitem{Nagoshi} Nagoshi, H. ``Distribution of Hecke Eigenvalues." \textit{Proc. Amer. Math. Soc.} {134}, no. 11 (2006): 3097--106.

\bibitem{RS} Rohatgi, V. K. and A. K. Md. E. Saleh. \textit{An introduction to probability and statistics}. Second edition. Wiley Ser. Probab. Stat. Wiley-Interscience, New York (2001).

\bibitem{RT} Rouse, J. and J. Thorner. ``The explicit Sato-Tate conjecture and densities pertaining to Lehmer-type questions."  \textit{Trans. Amer. Math. Soc.} {369}, no. 5 (2017): 3575--604. 

\bibitem{Sarnak} Sarnak, P. ``Statistical properties of eigenvalues of the Hecke operators." \text{Analytic number theory and Diophantine problems}, (Stillwater, OK, 1984), Progr. Math. 70. Birkh\"{a}user Boston, Boston, MA, 1987: 321--31.

\bibitem{Serre1} Serre, J.-P. \textit{Abelian l-adic representations and elliptic curves}. McGill University Lecture Notes Written with the Collaboration of Willem Kuyk and John Labute. W. A. Benjamin Inc, New York (1968).

\bibitem{Serre} Serre, J.-P. ``R\'{e}partition Asymptotique des Valeurs Propres de l'Operateur de Hecke $T_p$." \textit{J. Amer. Math. Soc.} 10, no. 1 (1997): 75--102.

\bibitem{Wang} Wang, Y. ``The quantitative distribution of Hecke eigenvalues."  \textit{Bull. Aust. Math. Soc.} 90, no. 1 (2014): 28--36.


\bibitem{Xiong} Xiong, M. ``Statistics of the zeros of zeta functions in a family of curves over a finite field." \textit{Int. Math. Res. Not.} 18 (2010): 3489--518.
\end{thebibliography}
\end{document}